\documentclass[12pt]{amsart}
\RequirePackage[dvipsnames,usenames]{color}
\usepackage{amssymb,amsthm,amsmath,amscd,quiver}
\pdfoutput=1
\usepackage{microtype}
\usepackage{mathtools}
\usepackage{soul}
\usepackage[backref=page, colorlinks=true, linkcolor=magenta, citecolor=cyan]{hyperref}
\usepackage{marvosym}
\usepackage{calligra}

\input{kmacros3.sty}

\usepackage{tikz}
\usepackage{tikz-cd}
\usetikzlibrary{cd}
\usepackage{graphicx}
\usepackage[all,cmtip]{xy}
\usepackage{mathpazo}

\usepackage{mabliautoref}
\numberwithin{equation}{theorem}

\usepackage{bm}
\usepackage{pifont}
\usepackage{upgreek}

\usepackage{eucal}
\usepackage{ulem}
\usepackage{stmaryrd}

\DeclareMathOperator{\perf}{perf}

\numberwithin{equation}{theorem}

\usepackage{fullpage}

\usepackage{setspace}
\usepackage{enumerate}
\usepackage{calc}

\usepackage{verbatim}
\usepackage{alltt}
\begin{document}

\title{When is Frobenius epic?}
\author{Javier Carvajal-Rojas}
\address{Centro de Investigaci\'on en Matem\'aticas, A.C., Callej\'on Jalisco s/n, 36023 Col. Valenciana, Guanajuato, Gto, M\'exico}
\email{\href{mailto:javier.carvajal@cimat.mx}{javier.carvajal@cimat.mx}}
\author{Rankeya Datta}
\address{University of Missouri, 219 Mathematical Sciences Building, 810 Rollins St., Columbia MO 65203, United States}
\email{\href{mailto:rankeya.datta@missouri.edu}{rankeya.datta@missouri.edu}}
\author{Noah Olander}
\address{UC Berkeley Mathematics Department, 970 Evans Hall, Berkeley, CA 94720, United States}
\email{\href{mailto:nolander@berkeley.edu}{nolander@berkeley.edu}}
\author{Axel St{\"a}bler}
\address{Universit\"at Leipzig\\ Mathematisches Institut\\ Augustusplatz 10\\
04109 Leipzig\\Germany} 
\email{\href{mailto:staebler@math.uni-leipzig.de}{staebler@math.uni-leipzig.de}}
\date{}

\keywords{Relative Frobenius, b-nil formal unramification.}

\thanks{Carvajal-Rojas was partially supported by the grants SECIHTI \#CBF2023-2024-224, \#CF-2023-G-33, and \#CBF-2025-I-673. Datta was supported in part by NSF DMS Grant \#2502333 and Simons Foundation grant
MP-TSM-00002400. Olander was supported in part by NSF DMS Grant \#2402087.}

\subjclass[2020]{14B05, 14G17}

\begin{abstract}
We prove that a homomorphism of rings of prime characteristic is b-nil formally unramified if and only if its relative Frobenius is an epimorphism. We specialize this result under different finiteness conditions such as (relative) $F$-finiteness and Noetherianity. We give an example where the absolute Frobenius is an epimorphism but is not surjective, as well as an example of a formally unramified homomorphism that is not b-nil formally unramified.
\end{abstract}

%\begin{document}
\maketitle

\section{Introduction}

In the early development of scheme theory, Grothendieck advocated the use of formal lifting properties to define classes of morphisms. For example, he defined smooth morphisms as formally smooth morphisms (in the discrete sense) that are locally of finite 
presentation \cite[D\'efinition 17.3.1]{EGAIV}. This definition is particularly useful for checking that a moduli space is smooth over the base, as it can be stated entirely in terms of the functor of points. Grothendieck also proved the deep result that formally smooth, but not necessarily finitely presented, maps of Noetherian rings are regular; that is, flat
with geometrically regular fibers (see \cite[Corollaire 19.6.5]{EGAIV}). However, in the absence of any finiteness hypotheses, there are many pathological examples of formally smooth, \'etale, and unramified ring maps. For example, a formally \'etale algebra over a field need not be reduced; see \cite{Bhatt_imperfect_trivial_cc}.

In the work \cite{DattaOlanderRelFrobIso}, the second and third named authors of this article investigate a strengthening of these lifting properties that was introduced by Berthelot in \cite[IV, D\'efinition 1.5.1]{berthelot1974cohomologie}. Briefly, instead of considering lifts of ring maps with respect to nilpotent ideals, one considers lifts with respect to ideals that are nil of bounded index; see \autoref{sec:main}. The resulting analogs of formally smooth, \'etale, and unramified ring maps are called b-nil formally smooth, \'etale, and unramified ring maps. Datta and Olander prove several structural properties of b-nil formally \'etale and b-nil formally smooth maps in prime characteristic $p>0$ without any finiteness hypotheses. Namely, b-nil formally smooth algebras over fields of positive characteristic are reduced; for b-nil formally smooth ring maps of $\mathbf{F}_p$-algebras, the relative Frobenius is flat, and it even exhibits the target as a projective module over the source; and a map of $\mathbf{F}_p$-algebras is b-nil formally \'etale if and only if its relative Frobenius is an isomorphism. The work loc. cit. leaves open the question of whether there is a characterization of b-nil formally unramified ring maps of $\mathbf{F}_p$-algebras in terms of the relative Frobenius.

 In the independent work \cite{CarvajalRojasStablerPristineMorphisms}, the first and fourth named authors develop the theory of flat morphisms of $\mathbf{F}_p$-schemes with isomorphic relative Frobenius, which they call pristine morphisms, and they show that this class of morphisms behaves well from the perspective of F-singularity theory of locally  Noetherian schemes. An important technical result in their paper is that a map of $\mathbf{F}_p$-algebras $R \to S$, for which both $S$ and its Frobenius twist $S^{(p)} = F_*R\otimes_R S$ are Noetherian, is formally unramified if and only if the relative Frobenius $F_{S/R} : S^{(p)} \to S$ is surjective \cite[Theorem 4.1]{CarvajalRojasStablerPristineMorphisms}. In this paper, we prove an analogous characterization of b-nil formally unramified morphisms in positive characteristic with no finiteness assumptions. 

\begin{theorem}(\autoref{thm:b-nil-formally-unramified})
\label{thm:main-intro}
     Let $\varphi \colon R \to S$ be a map of $\mathbf{F}_p$-algebras. The following are equivalent: 
    \begin{enumerate}
        \item $\varphi$ is b-nil formally unramified.
        \item $F_{S/R}$ is b-nil formally unramified.
        \item $F_{S/R}$ is an epimorphism of rings.
    \end{enumerate}
\end{theorem}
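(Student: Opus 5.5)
We will prove the cycle $(1)\Rightarrow(3)\Rightarrow(2)\Rightarrow(1)$. Recall that $\varphi\colon R\to S$ is b-nil formally unramified if for every $R$-algebra $C$ and ideal $I\subseteq C$ that is nil of bounded index, the map $\operatorname{Hom}_R(S,C)\to\operatorname{Hom}_R(S,C/I)$ is injective. Also recall that a ring map $A\to B$ is an epimorphism if and only if the multiplication map $B\otimes_A B\to B$ is an isomorphism, or equivalently if $b\otimes 1=1\otimes b$ in $B\otimes_A B$ for all $b\in B$.

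\textbf{Step $(3)\Rightarrow(2)$.} An epimorphism is b-nil formally unramified, since it is even unramified for every test object. Two maps $B\to C$ agreeing on $A$ must coincide, so lifts are unique. This step is formal.

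\textbf{Step $(2)\Rightarrow(1)$.} Let $g_1,g_2\colon S\to C$ be two $R$-algebra maps that agree modulo $I$, where $I^{[p^e]}=0$, or more precisely $x^{p^e}=0$ for all $x\in I$. We want to show $g_1=g_2$. First, $g_1(s)^{p}-g_2(s)^{p}=(g_1(s)-g_2(s))^p$. The key observation is that the compositions $g_i\circ F_{S/R}\colon S^{(p)}=F_*R\otimes_R S\to F_*C$ are given by $r\otimes s\mapsto r\,g_i(s)^p$. Here we regard $C$ as an $F_*R$-algebra via Frobenius, i.e.\ we use the relative Frobenius of $C$. Hence these compositions agree modulo the ideal $I^{[p]}$ generated by $p$-th powers, which has smaller nilpotence index. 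Inducting on the bound $p^e$ is cleaner if we use (2) directly instead. The maps $S\to C$ induce $S^{(p)}\to C^{(p)}$, and composing with $F_{C/R}$ gives $S^{(p)}\to C$, $r\otimes s\mapsto r^{\,}g_i(s)^p$ (appropriately twisted). Because $I$ is nil of bounded index, these two maps agree after finitely many Frobenius iterations. Then (2), applied to the map $S^{(p)}\to S$ with test pair $(C,I)$, where $C$ is viewed as an $S^{(p)}$-algebra, gives the equality $g_1=g_2$. The hard bookkeeping is in making $C$ an $S^{(p)}$-algebra compatibly with both $g_1$ and $g_2$. I would try induction on $e$: show that $g_1,g_2$ agree modulo $\{x: x^{p^{e-1}}=0\}$ by using the $p$-th power trick, and then use (2) once.

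\textbf{Step $(1)\Rightarrow(3)$, the main obstacle.} We must show that $S\otimes_{S^{(p)}}S\to S$ is an isomorphism. Let $C=S\otimes_{S^{(p)}}S$ and consider the two coprojections $\iota_1,\iota_2\colon S\to C$, which are $R$-algebra maps. Let $J=\ker(C\to S)$. This ideal is generated by the elements $s\otimes1-1\otimes s$. In $C$ we have
\[
(s\otimes 1-1\otimes s)^p=s^p\otimes1-1\otimes s^p=0,
\]
since $s^p$ lies in the image of $S^{(p)}$ (as $1\otimes s$). It follows that every element of $J$ is a sum of elements with vanishing $p$-th power. Since $C$ has characteristic $p$, every element of $J$ therefore has $p$-th power $0$, so $J$ is nil of index $p$, which is a bounded index. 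Now $\iota_1\equiv\iota_2$ modulo $J$, so (1) gives $\iota_1=\iota_2$. Hence $s\otimes1=1\otimes s$ for all $s$, which is exactly the epimorphism criterion.

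The delicate point is verifying that $\iota_1,\iota_2$ are $R$-linear and that $J$ is generated as claimed. Both follow from the fact that $S$ is generated over $S^{(p)}$'s image by elements $s$, together with the surjectivity of multiplication. Given this, $(1)\Rightarrow(3)$ is actually the cleanest step, and the real care lies in $(2)\Rightarrow(1)$.
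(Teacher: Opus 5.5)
\textbf{Gap in $(2)\Rightarrow(1)$.} Your steps $(3)\Rightarrow(2)$ and $(1)\Rightarrow(3)$ are sound. Your $(1)\Rightarrow(3)$ is essentially the paper's $(a)\Rightarrow(b)\Rightarrow(c)$ compressed into one step. The paper passes through $\ker(\mu_{F_\varphi})=\ker(\mu_{F_\varphi})^{[p]}$ together with $\ker(\mu_{F_\varphi})^{[p]}=0$, whereas you test (1) directly on the two coprojections into $S\otimes_{S^{(p)}}S$. One correction there: the reason you give for $\iota_1,\iota_2$ being $R$-algebra maps does not work. The actual reason is that $\varphi(r)=F_\varphi(r\otimes 1)$ lies in the image of $S^{(p)}$, so $\varphi(r)\otimes 1=1\otimes\varphi(r)$ in $S\otimes_{S^{(p)}}S$.

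The step $(2)\Rightarrow(1)$, as written, is not a proof. To invoke (2) for $g_1,g_2\colon S\to C$, you need a single $S^{(p)}$-algebra structure on $C$ for which both $g_i$ are $S^{(p)}$-algebra maps. That is, you need $g_1\circ F_\varphi=g_2\circ F_\varphi$. You call this ``hard bookkeeping'' and never establish it. Your suggested mechanism, that the maps ``agree after finitely many Frobenius iterations,'' is not this equality. For a test ideal with only $I^{[p^e]}=0$ and $e\geq 2$, you get merely $g_1\circ F_\varphi\equiv g_2\circ F_\varphi \bmod I^{[p]}$. So applying (2) to the pair $(C,I)$ is not legitimate.

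\textbf{The fix.} The missing observation is one line. With the paper's definition, test ideals satisfy $I^{[p]}=0$, so
$g_1(s)^p-g_2(s)^p=(g_1(s)-g_2(s))^p=0$.
Hence $g_1(\varphi(r)s^p)=g_2(\varphi(r)s^p)$, i.e.\ $g_1\circ F_\varphi=g_2\circ F_\varphi$ on the nose: $g_1$ and $g_2$ agree on $R[S^p]$. Then $C$ is an $S^{(p)}$-algebra via this common map, and (2) gives $g_1=g_2$. Even more directly, (3) gives it; this is the $(c)\Rightarrow(a)$ the paper cites as well known.

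If you keep your ``nil of bounded index'' definition, you must first reduce to the case $I^{[p]}=0$. Do this by descending one step at a time along $C\to C/I^{[p^{e-1}]}\to\cdots\to C/I^{[p]}\to C/I$, using $(I^{[p^k]})^{[p]}=I^{[p^{k+1}]}$ and applying the $[p]$-case at each stage.

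\textbf{Paper's ordering.} The paper's order of the cycle avoids your difficulty entirely. First, $(1)\Rightarrow(2)$ is formal from $F_*\varphi=F_\varphi\circ(\mathrm{id}_{F_*R}\otimes_R\varphi)$ plus the cancellation property of b-nil formal unramifiedness. Next, your coprojection argument applied to $F_\varphi$ itself gives $(2)\Rightarrow(3)$, with no linearity check needed. Finally, the one-liner above gives $(3)\Rightarrow(1)$.
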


We give an example to show that the result does not hold when ``epimorphism'' is replaced by ``surjection.'' In fact, we construct an example of an $\mathbf{F}_p$-algebra $A$ whose absolute Frobenius is an epimorphism but not surjective; see \autoref{sec:epimorphic-Frobenius-not-surjective}. This answers a Mathoverflow question from Matthieu Romagny \cite{MOFrobeniusepisurjective} and shows that \autoref{thm:main-intro} is optimal because the absolute Frobenius of $A$ is the relative Frobenius of $\mathbf{F}_p \to A$. We provide another example to show that a formally unramified map of rings of prime characteristic $p>0$ need not be b-nil formally unramified; see \autoref{sec:formally-unramified-not-bnil}. Thus, b-nil formal unramifiedness is a stronger property than formal unramifiedness. In addition, we prove several strengthenings of \autoref{thm:main-intro} under various finiteness conditions. For instance, in \autoref{prop:b-nil-formally-unramified-image-relFrobenius-Noetherian}, we offer another perspective on the characterization of formally unramified maps of Noetherian $\mathbf{F}_p$-algebras that was recently given in \cite[Theorem 4.1]{CarvajalRojasStablerPristineMorphisms} by the first and fourth authors. We prove in \autoref{thm:F-finite-b-nil-formally-unramified} that if a homomorphism of prime characteristic rings has finite relative Frobenius, then it is formally unramified if and only if it is b-nil formally unramified. This is the analog of \cite[Theorem 1.5]{DattaOlanderRelFrobIso}, which states that if a homomorphism of prime characteristic rings has finitely presented relative Frobenius, then it is b-nil formally smooth (resp. \'etale) if and only if it is formally smooth (resp. \'etale). The thrust of these enhancements of \autoref{thm:main-intro} is that under finiteness conditions, epimorphisms are often surjective.

The paper is organized as follows. In \autoref{sec:epimorphisms}, we collect for later use some generalities on epimorphisms in the category of rings. This notion is surprisingly subtle and was the subject of a seminar by Samuel; see \cite{SamuelSeminar}. In \autoref{sec:main}, we give the proof of \autoref{thm:main-intro}. In \autoref{sec:noetherian}, we discuss consequences in the Noetherian case, and in particular, the relationship between \autoref{thm:main-intro} and \cite[Theorem 4.1]{CarvajalRojasStablerPristineMorphisms}. Finally, in \autoref{sec:epimorphic-Frobenius-not-surjective} and \autoref{sec:formally-unramified-not-bnil}, we give examples of, respectively, a ring of prime characteristic whose absolute Frobenius is an epimorphism but not surjective, and a ring map of prime characteristic that is formally unramified but not b-nil formally unramified. 

    \section{Preliminaries}
    When we talk about formal properties of ring maps in this paper, such as formally smooth/unramified/\'etale, we do so in the discrete sense. The terminology such as $0$-smooth/unramified/\'etale is also used in the literature \cite{MatsumuraCommutativeRingTheory}.

    If $R$ is a ring of prime characteristic $p > 0$, by $F_*R$ we mean that the ring $R$ is viewed as an $R$-algebra by restriction of scalars along the absolute Frobenius
    \begin{align*}
        F_R \colon R &\to R\\
        r&\mapsto r^p.
    \end{align*}
    We omit $R$ from the subscript of $F_R$ when it is clear from the context.
    For an ideal $I$ of $R$, $I^{[p]}$ is the extension of $I$ along $F$. That is, $I^{[p]}$ is the ideal of $R$ generated by the $p$-th powers of elements of $I$.

    If $\varphi \colon R \to S$ is a map of $\mathbf{F}_p$-algebras, then the \emph{relative Frobenius} $F_\varphi$ or $F_{S/R}$ is the unique ring map $F_*R \otimes_R S \to F_*S$ such that the following diagram commutes:
    \[
    \begin{tikzcd}[column sep=huge]
	R & S & \\
	{F_*R} & {F_*R\otimes_RS} \\
	&& {F_*S.}
	\arrow["\varphi", from=1-1, to=1-2]
	\arrow["{F_R}"', from=1-1, to=2-1]
	\arrow["{F_R\otimes_R\id_S}", from=1-2, to=2-2]
	\arrow["{F_S}", curve={height=-30pt}, from=1-2, to=3-3]
	\arrow["{\id_{F_*R}\otimes_R\varphi}"', from=2-1, to=2-2]
	\arrow["{F_*\varphi}"', curve={height=24pt}, from=2-1, to=3-3]
	\arrow["{F_\varphi}"', color={rgb,255:red,214;green,92;blue,92}, from=2-2, to=3-3]
\end{tikzcd}
\]
The domain, $F_*R \otimes_R S$, of $F_\varphi$ will often be denoted as $S^{(p)}$.

\section{Epimorphisms of Rings}
\label{sec:epimorphisms}

In this paper, we will analyze when the relative Frobenius of a map of $\mathbf{F}_p$-algebras is an epimorphism. So we recall some basic facts about ring epimorphisms in this section for the reader's convenience. The best reference is the expos\'es for Samuel's seminar on ring epimorphisms \cite{SamuelSeminar}. 

In the rest of the paper, for a ring map $\varphi \colon R \to S$, we use $\mu_\varphi$ or $\mu_{S/R}$ to denote the canonical multiplication map 
\begin{align*}
    S \otimes_R S &\twoheadrightarrow S\\
    x \otimes y &\mapsto xy.
\end{align*}
Note that $\mu_{\varphi}$ is surjective with kernel 
\begin{equation} \label{eqn.Definition_Diagonal_Ideal}
 \ker (\mu_{\varphi}) = \langle s \otimes 1 - 1 \otimes s \mid s\in S \rangle.
\end{equation}
%the ideal generated by elements $s \otimes 1 - 1 \otimes s$. $s \otimes 1 - 1 \otimes s$
In addition, $\mu_{\varphi}$ is a retraction for both of the canonical homomorphisms $S \to S \otimes_R S$. Furthermore, we have a split short exact sequence of $S$-modules (for each canonical map $S \to S \otimes_R S$)
\[
0 \to S \to S \otimes_R S \to S \otimes_R \coker(\varphi) \to 0
\]
which yields the direct sum decomposition
\[
S \otimes_R S = \im(S \to S \otimes_R S) \oplus \ker(\mu_{\varphi})
\]
and so an isomorphism of $S$-modules
\[
%\ker \mu_{\varphi}
\ker(\mu_{\varphi}) \cong S \otimes_R \coker(\varphi).
\]
From this, one can readily derive the following alternative characterizations of epimorphisms. 

\begin{lemma}
    \label{lem:epimorphism-alternative-characterizations}
    Let $\varphi \colon R \to S$ be a ring map. Then the following are equivalent:
    \begin{enumerate}
        \item $\varphi$ is an epimorphism (in the category of rings). \label{lem:epimorphism-alternative-characterizations.a}
        \item The two canonical maps $S \to S \otimes_R S$ are equal. \label{lem:epimorphism-alternative-characterizations.b}
        \item Either of the two canonical injections $S \to S \otimes_R S$ is an isomorphism. \label{lem:epimorphism-alternative-characterizations.c}
        \item $S \otimes_R \coker(\varphi) = 0$. \label{lem:epimorphism-alternative-characterizations.d}
        \item The multiplication map $\mu_\varphi \colon S \otimes_R S \twoheadrightarrow S$ is an isomorphism. \label{lem:epimorphism-alternative-characterizations.e}
    \end{enumerate}
\end{lemma}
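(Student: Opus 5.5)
The plan is to organize everything around the split exact sequence and the decomposition recalled just before the statement. Write $i_1, i_2 \colon S \to S\otimes_R S$ for the two canonical maps, $i_1(s)=s\otimes 1$ and $i_2(s)=1\otimes s$. Both are sections of $\mu_\varphi$, and we have $S\otimes_R S = \im(i_j)\oplus\ker(\mu_\varphi)$ together with $\ker(\mu_\varphi)\cong S\otimes_R\coker(\varphi)$. I will prove (a)$\Leftrightarrow$(b) directly from the universal property of the tensor product. The remaining equivalences, (b)$\Leftrightarrow$(e)$\Leftrightarrow$(c)$\Leftrightarrow$(d), all reduce to the vanishing of $\ker(\mu_\varphi)$.

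\textbf{(a)$\Leftrightarrow$(b).}
\begin{itemize}
\item[(a)$\Rightarrow$(b):] Since $i_1\circ\varphi = i_2\circ\varphi$ (both send $r$ to $\varphi(r)\otimes 1 = 1\otimes\varphi(r)$), an epimorphism forces $i_1=i_2$.
\item[(b)$\Rightarrow$(a):] Suppose $g,h\colon S\to T$ satisfy $g\varphi = h\varphi$. The pair $(g,h)$ induces a ring map $\theta\colon S\otimes_R S\to T$ with $\theta\circ i_1 = g$ and $\theta\circ i_2 = h$. Hence $i_1=i_2$ gives $g=h$.
\end{itemize}

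\textbf{Reduction to the kernel.}
\begin{itemize}
\item (b)$\Leftrightarrow$ $\ker(\mu_\varphi)=0$: by \eqref{eqn.Definition_Diagonal_Ideal}, $\ker(\mu_\varphi)$ is generated by the elements $i_1(s)-i_2(s)$. So it vanishes exactly when $i_1=i_2$.
\item (e)$\Leftrightarrow$ $\ker(\mu_\varphi)=0$: $\mu_\varphi$ is surjective, so it is an isomorphism iff it is injective.
\item (c)$\Leftrightarrow$ $\ker(\mu_\varphi)=0$: by the direct sum decomposition, $i_j$ (which is injective, being a section) is surjective iff the complement $\ker(\mu_\varphi)$ vanishes. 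This holds for either choice of $j$, which accounts for the word ``either'' in (c).
\item (d)$\Leftrightarrow$ $\ker(\mu_\varphi)=0$: this follows from the isomorphism $\ker(\mu_\varphi)\cong S\otimes_R\coker(\varphi)$.
\end{itemize}

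\textbf{Main obstacle.} The only step needing care is justifying the split exact sequence and the identification $\ker(\mu_\varphi)\cong S\otimes_R\coker(\varphi)$, which the text asserts. I would obtain it by tensoring $R\xrightarrow{\varphi} S\to\coker(\varphi)\to 0$ on the left with $S$. This gives $S\to S\otimes_R S\to S\otimes_R\coker(\varphi)\to 0$, exact by right exactness. The first map is $i_1$, which is split injective via $\mu_\varphi$. The image of $i_1$ is complementary to $\ker(\mu_\varphi)$: any $x$ decomposes as $x = i_1\mu_\varphi(x) + (x - i_1\mu_\varphi(x))$. Therefore the cokernel of $i_1$, namely $S\otimes_R\coker(\varphi)$, is isomorphic to $\ker(\mu_\varphi)$. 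Everything else is formal.
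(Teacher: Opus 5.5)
Your proposal is correct and follows the paper's route. The paper derives the lemma from the split exact sequence $0\to S\to S\otimes_R S\to S\otimes_R\coker(\varphi)\to 0$, the decomposition $S\otimes_R S=\im(S\to S\otimes_R S)\oplus\ker(\mu_\varphi)$, and the isomorphism $\ker(\mu_\varphi)\cong S\otimes_R\coker(\varphi)$ obtained by right exactness, exactly as you reduce (b)--(e) to $\ker(\mu_\varphi)=0$, with (a)$\Leftrightarrow$(b) being the standard universal-property argument you give.
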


\begin{comment}
\begin{proof}
    This is standard. Note that by right exactness of tensor products $S \otimes_R \coker(\varphi)$ is the cokernel of either of the canonical injections $S \to S \otimes_R S$.
\end{proof}
\end{comment}
\autoref{lem:epimorphism-alternative-characterizations} immediately implies the following two corollaries.

\begin{corollary}
    \label{cor:epimorphisms-formally-unramified}
    Let $\varphi \colon R \to S$ be a ring epimorphism.
    \begin{enumerate}
        \item The base change $\id_T \otimes_R \varphi \colon T \to T \otimes_R S$ is an epimorphism for all $R$-algebras $T$.

        \item $\varphi$ is formally unramified.

        \item For every $S$-module $M$, the canonical homomorphism $M \to M \otimes_R S$ is an isomorphism. In other words, every $S$-module is the base change of an $R$-module.
    \end{enumerate}
\end{corollary}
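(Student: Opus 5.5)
Each of the three parts will follow from \autoref{lem:epimorphism-alternative-characterizations}, using in each case the characterization that best fits the assertion.

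For (1), I will use characterization \eqref{lem:epimorphism-alternative-characterizations.e}: $\varphi$ is an epimorphism if and only if $\mu_\varphi$ is an isomorphism. Put $S_T := T\otimes_R S$. Then there is a canonical isomorphism
\[
S_T\otimes_T S_T \cong T\otimes_R (S\otimes_R S),
\]
and under it $\mu_{S_T/T}$ corresponds to $\id_T\otimes_R \mu_\varphi$. Since $\mu_\varphi$ is an isomorphism, so is its base change, and hence $\id_T\otimes_R\varphi$ is an epimorphism. Alternatively, one can base change the vanishing $S\otimes_R\coker(\varphi)=0$ from characterization \eqref{lem:epimorphism-alternative-characterizations.d}, using that $\coker(T\to T\otimes_R S)=T\otimes_R\coker(\varphi)$ by right exactness.

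For (2), formal unramifiedness asks that for every $R$-algebra $A$ and square-zero ideal $I\subseteq A$, any two $R$-algebra maps $S\to A$ that agree modulo $I$ are equal. This is immediate, since $\varphi$ is an epimorphism: two $R$-algebra maps $S\to A$ agree on the image of $R$, so they are equal, even without any congruence condition. Equivalently, $\Omega_{S/R}=\ker(\mu_\varphi)/\ker(\mu_\varphi)^2=0$, because $\ker(\mu_\varphi)=0$ by \eqref{lem:epimorphism-alternative-characterizations.e}.

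For (3), fix an $S$-module $M$. The map $M\to M\otimes_R S$, $m\mapsto m\otimes 1$, factors as
\[
M\cong M\otimes_S S\longrightarrow M\otimes_S(S\otimes_R S)\cong M\otimes_R S,
\]
where the middle map is $\id_M\otimes_S$ applied to one of the canonical maps $S\to S\otimes_R S$. By \eqref{lem:epimorphism-alternative-characterizations.c} that canonical map is an isomorphism, so the composite is an isomorphism.

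The only point requiring care is in (3). The canonical map $S\to S\otimes_R S$ must be the one that is $S$-linear for the $S$-module structure used to form $M\otimes_S(S\otimes_R S)$. By \eqref{lem:epimorphism-alternative-characterizations.b} the two canonical maps coincide, so either choice works. I expect this bookkeeping to be the main (mild) obstacle; everything else is formal.
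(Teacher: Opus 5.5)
Your proposal is correct and takes essentially the paper's approach. The paper gives no separate argument; it states only that the corollary follows immediately from \autoref{lem:epimorphism-alternative-characterizations}, and your three derivations are exactly the intended unpacking: base-changing $\mu_\varphi$ (or the vanishing of $S\otimes_R\coker(\varphi)$) for (1), the defining property of epimorphisms (or $\ker(\mu_\varphi)=0$) for (2), and applying $\id_M\otimes_S$ to the isomorphism $S\to S\otimes_R S$ for (3). Your use of the equality of the two canonical maps in (3) is also exactly what makes $M\to M\otimes_R S$ linear for the $S$-structure on the right tensor factor, which the ``base change'' reformulation needs.
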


\begin{corollary}
    \label{cor:epic-upto-nilpotent-ideal}
    Let $\varphi \colon R \to S$ be a ring map and $I \subset R$ be a nilpotent ideal (i.e. $I^n = 0$ for some $n > 0$). If the induced map $\varphi \otimes_R \id_{R/I} \colon R/I \to S/IS$ is an epimorphism, then $\varphi$ is an epimorphism.
\end{corollary}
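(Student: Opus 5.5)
We prove \autoref{cor:epic-upto-nilpotent-ideal} through characterization \eqref{lem:epimorphism-alternative-characterizations.d} of \autoref{lem:epimorphism-alternative-characterizations}: it suffices to show $S \otimes_R \coker(\varphi) = 0$. Set $C = \coker(\varphi)$ and $M = S \otimes_R C$, viewed as an $S$-module (or simply as an $R$-module). The plan is to show first that $M = IM$, and then to iterate this using the nilpotence of $I$.

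\textbf{Step 1: $M/IM = 0$.} Right exactness of tensor products gives
\[
M/IM \cong M \otimes_R R/I \cong (S/IS) \otimes_{R/I} (C/IC).
\]
Next, $C/IC = C \otimes_R R/I$ is the cokernel of $\varphi \otimes_R \id_{R/I} \colon R/I \to S/IS$, again because tensoring with $R/I$ is right exact and so preserves cokernels. The hypothesis says this map is an epimorphism. Applying \eqref{lem:epimorphism-alternative-characterizations.d} of \autoref{lem:epimorphism-alternative-characterizations} to it yields $(S/IS)\otimes_{R/I} \coker(\varphi \otimes_R \id_{R/I}) = 0$, and hence $M/IM = 0$.

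\textbf{Step 2: nilpotence.} From $M = IM$ we get $M = IM = I(IM) = I^2M = \cdots = I^nM = 0$. This requires no finiteness assumption on $M$, since we never invoke Nakayama's lemma; the argument uses only that $I^n = 0$. Therefore $S\otimes_R \coker(\varphi) = 0$, and $\varphi$ is an epimorphism by \autoref{lem:epimorphism-alternative-characterizations}.

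The only point needing care is the identification in Step 1, namely that the cokernel of the reduced map $R/I \to S/IS$ is $C/IC$ and that the tensor products match up over $R/I$ versus over $R$. Both follow from right exactness together with the standard isomorphism $(S\otimes_R C)\otimes_R R/I \cong (S/IS)\otimes_{R/I}(C/IC)$.
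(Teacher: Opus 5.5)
Your proposal is correct and is essentially the paper's proof. The paper likewise identifies $(S/IS)\otimes_{R/I}\coker(\varphi\otimes_R \id_{R/I})$ with $(S\otimes_R\coker(\varphi))\otimes_R R/I$, concludes that it vanishes, and then cites the nilpotent version of Nakayama's lemma (Stacks, Tag 00DV (9)); the content of that citation is exactly your Step 2 iteration $M = IM = I^2M = \cdots = I^nM = 0$.
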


\begin{proof}
    We have $\coker(\varphi \otimes_R \id_{R/I}) = \coker(\varphi) \otimes_R R/I$. Thus, 
    \[
    S/IS \otimes_{R/I} \coker(\varphi \otimes_R \id_{R/I}) = (S \otimes_R \coker(\varphi)) \otimes_R R/I.
    \]
    %Since $\varphi \otimes_R \id_{R/I}$ is an epimorphism, $(S \otimes_R \coker(\varphi)) \otimes_R R/I = 0$
    Since $\varphi \otimes_R \id_{R/I}$ is an epimorphism, this vanishes by \autoref{lem:epimorphism-alternative-characterizations}. Therefore, by the nilpotent version of Nakayama's Lemma \cite[\href{https://stacks.math.columbia.edu/tag/00DV}{Tag~00DV~(9)}]{stacks-project}, $S \otimes_R \coker(\varphi) = 0$, and so $\varphi$ is an epimorphism by \autoref{lem:epimorphism-alternative-characterizations}.
\end{proof}

\begin{example}[Ring epimorphisms need not be surjective.]
    The canonical map associated with the localization of a ring at a multiplicative set is always an epimorphism, but it is usually not surjective. Thus, unlike in the category of modules, epimorphisms in the category of commutative rings are not always surjective.
\end{example}

Recall that a ring map $R \to S$ is \emph{universally injective (or pure)} if for all $R$-modules $M$, the induced map $M \to S \otimes_R M$ is injective. Taking $M = R$, we see that universally injective ring maps are injective. Faithfully flat ring maps are universally injective. If $R \to S$ admits an $R$-linear left inverse, then $R \to S$ is universally injective.

%\todo{Isn't universally injective better known as pure?}

Following \cite{rg71}, we introduce the following notion.

\begin{definition}
    \label{def:property-o}
    A ring map $R \to S$ satisfies \emph{property (O)} if for all $R$-modules $M$, whenever $S \otimes_R M = 0$, then $M = 0$.
\end{definition}

\begin{remark}
\label{rem:ring-maps-satisfying-O}
    Let $\varphi \colon R \to S$ be a ring map.
    \begin{enumerate}
        \item If $\varphi$ is universally injective, then $\varphi$ satisfies (O). This follows because $M \to S \otimes_R M$ is injective for all $R$-modules $M$.\label{rem:ring-maps-satisfying-O.a}
        \item More generally, suppose $\varphi$ \emph{descends flatness}. That is, suppose for all $R$-modules $M$, if $S \otimes_R M$ is $S$-flat, then $M$ is $R$-flat. If $\varphi$ is additionally injective, then it satisfies (O). Indeed, $S \otimes_R M = 0$ implies $M$ is $R$-flat. Then $M \to S \otimes_R M$ is injective by the injectivity of $\varphi$, and so $M = 0$. Note that a universally injective ring map is injective and descends flatness \cite[\href{https://stacks.math.columbia.edu/tag/08XD}{Tag 08XD}]{stacks-project}. If $\varphi$ is injective and $R$ is Noetherian, then $\varphi$ satisfies (O) if and only if $\varphi$ descends flatness by \cite[Part~II,~Corollaire~1.2.10]{rg71}.\label{rem:ring-maps-satisfying-O.b}
        \item Suppose $\varphi$ has the property that for all $R$-modules $M$, if $\Hom_R(S,M) = 0$, then $M = 0$. Then $\varphi$ satisfies (O) by \cite[Part~II,~Lemme~1.2.2]{rg71}.\label{rem:ring-maps-satisfying-O.c}
        \item Ring maps that are not injective can satisfy (O). For example, let $R$ be a non-reduced ring and let $I$ be a non-zero finitely generated nilpotent ideal of $R$ (i.e., $I^n = 0$ for some integer $n > 0$). Then $R \twoheadrightarrow R/I$ satisfies (O). Indeed, if $R/I \otimes_R M = 0$, then $M = IM$, and by induction, $M = I^nM = 0$.\label{rem:ring-maps-satisfying-O.d}
    \end{enumerate}
\end{remark}

\begin{lemma}
    \label{lem:O-epimorphism}
    Let $\varphi \colon R \to S$ be a ring epimorphism that satisfies (O). Then $\varphi$ is surjective and $\ker(\varphi)$ is contained in the nilradical of $R$. %Thus, if $\varphi$ is additionally injective (for e.g., if $\varphi$ is universally injective), then $\varphi$ is an isomorphism.
\end{lemma}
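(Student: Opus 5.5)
Surjectivity comes straight from \autoref{lem:epimorphism-alternative-characterizations} combined with property (O). Since $\varphi$ is an epimorphism, item (\ref{lem:epimorphism-alternative-characterizations.d}) gives $S \otimes_R \coker(\varphi) = 0$. Applying (O) to the $R$-module $M = \coker(\varphi)$ then yields $\coker(\varphi) = 0$, so $\varphi$ is surjective.

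For the kernel, set $I = \ker(\varphi)$, so that $S \cong R/I$. The goal is to show every element of $I$ is nilpotent. Fix $x \in I$ and consider the localization $R_x = R[1/x]$, an $R$-module. Its base change is $S \otimes_R R_x \cong (R/I)_x$. Since $x$ maps to $0$ in $R/I$, and $0$ is invertible in $(R/I)_x$, this ring is zero. Property (O) then forces $R_x = 0$, which means $x$ is nilpotent. Hence $I$ lies in the nilradical.

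Neither step is a real obstacle. The only point to get right is choosing the test module $M = R_x$ so that its vanishing encodes nilpotence of $x$. An alternative for the second part is to take $M = R_{\mathfrak p}$ for each prime $\mathfrak p$: $S \otimes_R R_{\mathfrak p} = 0$ unless $I \subseteq \mathfrak p$, while $R_{\mathfrak p} \neq 0$, so $I$ lies in every prime. The localization at $x$ argument is more direct, and it is the one I will use.
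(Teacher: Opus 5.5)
Your proof is correct and essentially the paper's. Surjectivity is obtained exactly as in the paper, by applying (O) to $\coker(\varphi)$ via \autoref{lem:epimorphism-alternative-characterizations}~\autoref{lem:epimorphism-alternative-characterizations.d}. For the kernel, the paper uses the alternative you mention: it tests (O) against the nonzero modules $R_\frp$ to get $\ker(\varphi) \subset \frp$ for every prime $\frp$. Your test module $R_x$ instead gives nilpotence of each $x \in \ker(\varphi)$ directly, and so avoids describing the nilradical as the intersection of all primes.
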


\begin{proof}
    By \autoref{lem:epimorphism-alternative-characterizations}~\autoref{lem:epimorphism-alternative-characterizations.d}, we have $S \otimes_R \coker(\varphi) = 0$; hence $\coker(\varphi) = 0$ proving surjectivity. Thus, we may replace $S$ by $R/\ker(\varphi)$. Then, for all prime ideals $\frp$ of $R$, $R/\ker(\varphi) \otimes_R R_\frp \neq 0$ as $R_\frp \neq 0$. However, this implies $\ker(\varphi) \subset \frp$.
\end{proof}

\begin{remark}
\label{rem:weaker-than-O}
    Surjectivity in the proof of \autoref{lem:O-epimorphism} follows if $S$ satisfies the weaker property that if $M$ is a quotient $R$-module of $S$ and $S \otimes_R M = 0$, then $M = 0$. This observation will be useful in \autoref{prop:epic-implies-surjective}.
\end{remark}

\begin{corollary}
    \label{cor:universally-injective-epimorphism}
    A universally injective epimorphism of rings is an isomorphism.
\end{corollary}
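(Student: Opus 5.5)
The corollary should follow by combining \autoref{rem:ring-maps-satisfying-O}~\autoref{rem:ring-maps-satisfying-O.a} with \autoref{lem:O-epimorphism}. Let $\varphi \colon R \to S$ be a universally injective epimorphism. I would proceed in three steps.

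First, by \autoref{rem:ring-maps-satisfying-O}~\autoref{rem:ring-maps-satisfying-O.a}, $\varphi$ satisfies property (O). Indeed, for every $R$-module $M$ the map $M \to S \otimes_R M$ is injective, so $S \otimes_R M = 0$ forces $M = 0$.

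Second, \autoref{lem:O-epimorphism} applies to the epimorphism $\varphi$ and shows that $\varphi$ is surjective.

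Third, taking $M = R$ in the definition of universal injectivity shows that $\varphi$ is injective. Hence $\varphi$ is bijective, and a bijective ring homomorphism is an isomorphism of rings.

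None of these steps is a real obstacle. The only substantive input is the surjectivity from \autoref{lem:O-epimorphism}, which rests on \autoref{lem:epimorphism-alternative-characterizations}~\autoref{lem:epimorphism-alternative-characterizations.d}: since $\varphi$ is epic, $S \otimes_R \coker(\varphi) = 0$.

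If one wanted to avoid property (O), the argument can also be run directly. Universal injectivity applied to $M = \coker(\varphi)$ gives an injection $\coker(\varphi) \hookrightarrow S \otimes_R \coker(\varphi)$. The target is $0$ because $\varphi$ is an epimorphism, so $\coker(\varphi) = 0$. Combined with injectivity, this again shows that $\varphi$ is an isomorphism.
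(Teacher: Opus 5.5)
Your proof is correct and is the paper's argument: universal injectivity gives injectivity and property (O) via \autoref{rem:ring-maps-satisfying-O}~\autoref{rem:ring-maps-satisfying-O.a}, and then \autoref{lem:O-epimorphism} gives surjectivity. Your direct variant, which applies universal injectivity to $M = \coker(\varphi)$, simply unwinds the proof of that lemma.
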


\begin{proof}
    This follows from \autoref{lem:O-epimorphism} as universally injective ring maps are injective ring maps that satisfy (O) by \autoref{rem:ring-maps-satisfying-O} (a).
\end{proof}

\begin{corollary}
    \label{cor:epimorphisms-from-fields}
    Let $\varphi \colon k \to R$ be a ring map where $k$ is a field. Then $\varphi$ is an epimorphism if and only if either $R = 0$ or $\varphi$ is an isomorphism.
\end{corollary}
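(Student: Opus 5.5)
The plan is to prove both directions of \autoref{cor:epimorphisms-from-fields} separately, reducing the nontrivial direction to \autoref{cor:universally-injective-epimorphism}.

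\emph{The easy direction.} If $R = 0$, then $R \otimes_k R = 0$. Hence the two canonical maps $R \to R \otimes_k R$ agree, and $\varphi$ is an epimorphism by \autoref{lem:epimorphism-alternative-characterizations}~\autoref{lem:epimorphism-alternative-characterizations.b}. If $\varphi$ is an isomorphism, it is trivially an epimorphism.

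\emph{The main direction.} Suppose $\varphi$ is an epimorphism and $R \neq 0$. Since $k$ is a field, the kernel of $\varphi$ is a proper ideal of $k$ (because $\varphi(1) = 1 \neq 0$ in $R$), so it is zero and $\varphi$ is injective. Moreover, $R$ is a nonzero $k$-vector space, so it is free over $k$. In particular, $R$ is faithfully flat over $k$: for every $k$-module $M$, the module $R \otimes_k M$ is a direct sum of copies of $M$ indexed by a nonempty basis.

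Faithfully flat maps are universally injective, as recalled before \autoref{def:property-o}. One can also see this directly here: choose a $k$-linear complement of $k \cdot 1$ in $R$. This gives a $k$-linear left inverse of $\varphi$, so $\varphi$ is universally injective. Now \autoref{cor:universally-injective-epimorphism} applies and shows that $\varphi$ is an isomorphism.

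The only point needing care is the case distinction $R = 0$ versus $R \neq 0$, which is exactly what guarantees that $\varphi$ is injective and splits. Once that is settled, nothing beyond the earlier results is required, so I do not expect any step to be an obstacle.
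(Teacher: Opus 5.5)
Your proposal is correct and follows essentially the same route as the paper: for $R \neq 0$, the map $\varphi$ is split injective because $k$ is a field, hence universally injective, and \autoref{cor:universally-injective-epimorphism} then shows it is an isomorphism. Your treatment of the easy direction and your faithful-flatness remark are fine additions, though the latter is not needed.
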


\begin{proof}
    The interesting implication is $\implies$. If $R \neq 0$, then $\varphi \colon k \to R$ is split injective and hence universally injective. Then $\varphi$ is an isomorphism by \autoref{cor:universally-injective-epimorphism}.
\end{proof}

\begin{remark}
\label{rem:injective-on-spec}
    Since epimorphisms are preserved under arbitrary base change (\autoref{cor:epimorphisms-formally-unramified}), if $R \to S$ is an epimorphism, then \autoref{cor:epimorphisms-from-fields} implies that for all $\frp \in \Spec(R)$, $\kappa(\frp) \otimes_R S$ is either the zero ring or isomorphic to $\kappa(\frp)$. Thus, the induced map $\Spec(S) \to \Spec(R)$ is injective and induces isomorphisms on residue fields.
\end{remark}

\begin{lemma}
    \label{lem:integral-epics}
    Let $R \subset S$ be an epimorphic integral extension of rings, and $\frp \in \Spec(R)$. Then, $R_\frp \to S_\frp$ is a local integral extension such that $\frp S_\frp$ is the maximal ideal of $S_\frp$.
\end{lemma}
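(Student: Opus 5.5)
We first localize. Localization at $R \setminus \frp$ is flat, so $R_\frp \to S_\frp$ is injective. Integrality is preserved under localization. The epimorphism property is preserved under base change by \autoref{cor:epimorphisms-formally-unramified}(1), since $S_\frp = R_\frp \otimes_R S$. So $R_\frp \subset S_\frp$ is again an epimorphic integral extension, and we may assume $R$ is local with maximal ideal $\frp$. It then remains to show that $S$ is local with maximal ideal $\frp S$.

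The key step is to base change to the residue field. By \autoref{cor:epimorphisms-formally-unramified}(1), the map $\kappa(\frp) \to \kappa(\frp) \otimes_R S = S/\frp S$ is an epimorphism. So by \autoref{cor:epimorphisms-from-fields}, either $S/\frp S = 0$ or $\kappa(\frp) \to S/\frp S$ is an isomorphism.

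The case $S/\frp S = 0$ must be excluded. Since $R \subset S$ is an integral extension, lying over holds: some prime of $S$ contracts to $\frp$, and that prime contains $\frp S$. Hence $\frp S \neq S$. This use of lying over is the only place integrality enters, and I expect it to be the one real obstacle; the rest is formal.

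So $S/\frp S \cong \kappa(\frp)$ is a field, which means $\frp S$ is a maximal ideal of $S$. We still need it to be the unique maximal ideal. For integral extensions, every maximal ideal $\frm$ of $S$ contracts to a maximal ideal of $R$, which must be $\frp$ because $R$ is local. Then $\frp S \subseteq \frm$, and since $\frp S$ is maximal, $\frm = \frp S$. Therefore $S$ is local with maximal ideal $\frp S$, and $R_\frp \to S_\frp$ is a local homomorphism because $\frp$ maps into $\frp S$.
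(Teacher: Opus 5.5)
Your proof is correct and follows essentially the same route as the paper: localize, base change the epimorphism to $\kappa(\frp)$, apply \autoref{cor:epimorphisms-from-fields} with the fiber nonzero by lying over to get that $\frp S_\frp$ is maximal, and then use that every maximal ideal of $S_\frp$ contracts to $\frp R_\frp$ to conclude it equals $\frp S_\frp$. One small correction to your commentary: lying over is not the only place integrality enters, since the fact that maximal ideals of $S_\frp$ contract to maximal ideals of $R_\frp$ also uses it.
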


\begin{proof}
    It is clear that $R_\frp \to S_\frp$ is an integral extension. Then $S_\frp \neq 0$ and every maximal ideal of $S_\frp$ contracts to $\frp R_\frp$ by integrality. Thus, it is enough to show that $\frp S_\frp$ is a maximal ideal of $S_\frp$. To this end, observe that we have an integral epimorphism
    \[
    \kappa(\frp) \to \kappa(\frp) \otimes_{R_\frp} S_\frp = S_{\frp}/\frp S_{\frp} \neq 0.
    \] 
    %is an epimorphism by base change. Since $S_\frp/\frp S_\frp = \kappa(\frp) \otimes_{R_\frp} S_\frp \neq 0$ by lying over, we get that $\kappa(\frp) \to S_\frp/\frp S_\frp$ 
    It is an isomorphism by \autoref{cor:epimorphisms-from-fields}, and hence $S_\frp/\frp S_\frp$ is a field.
\end{proof}

\begin{lemma}
    \label{lem:epimorphism-extension-reduction}
    Let $\varphi \colon R \to S$ be a ring map. Then $\varphi$ is an epimorphism if and only if the ring extension $\im(\varphi) \hookrightarrow S$ is an epimorphism.
\end{lemma}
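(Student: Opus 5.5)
Write $R' := \im(\varphi) \subset S$ and factor $\varphi$ as the surjection $\pi \colon R \twoheadrightarrow R'$ followed by the inclusion $\iota \colon R' \hookrightarrow S$. The plan is to show that the tensor product $S \otimes_R S$ does not change when we replace $R$ by $R'$, and then conclude using \autoref{lem:epimorphism-alternative-characterizations}~\autoref{lem:epimorphism-alternative-characterizations.e}.

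First I would prove that the natural surjection $S \otimes_R S \to S \otimes_{R'} S$ is an isomorphism. The ring $S \otimes_{R'} S$ is the quotient of $S \otimes_R S$ by the relations $sr' \otimes t - s \otimes r't$ for $r' \in R'$. Each such $r'$ equals $\varphi(r)$ for some $r \in R$, so these relations already hold in $S \otimes_R S$. Equivalently, $R \to R'$ is surjective, so an $R$-bilinear map $S \times S \to M$ is automatically $R'$-bilinear, because the $R$-action on $S$ factors through $R'$.

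Next I would identify the multiplication maps. Under this isomorphism, $\mu_\varphi$ corresponds to $\mu_\iota \colon S \otimes_{R'} S \to S$, since both send $x \otimes y \mapsto xy$. It follows that $\mu_\varphi$ is an isomorphism if and only if $\mu_\iota$ is. By \autoref{lem:epimorphism-alternative-characterizations}, this is exactly the statement that $\varphi$ is an epimorphism if and only if $\iota$ is.

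As a cross-check, there is a direct categorical argument. Surjections are epimorphisms, and a composite of epimorphisms is an epimorphism, which gives one direction. Conversely, if $\varphi = \iota \circ \pi$ is epic, then $\iota$ is epic: given $f \circ \iota = g \circ \iota$, precompose with $\pi$ to get $f \circ \varphi = g \circ \varphi$, hence $f = g$. This route needs only that $\pi$ is surjective, and it is probably the cleanest proof.

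I do not expect a real obstacle in either argument; the only point to state carefully is the identification of the two tensor products, which holds because the $R$-module structures on $S$ factor through $R'$.
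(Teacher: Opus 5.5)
Your proposal is correct. The paper's proof is exactly your ``cross-check'': it uses that a composite of epimorphisms is an epimorphism, and that if $A \to B \to C$ has epimorphic composite then $B \to C$ is epic. It applies these to $R \twoheadrightarrow R' \hookrightarrow S$, with surjections being epimorphisms.

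That argument is purely formal. It works in any category once $\pi$ is known to be epic, and it never mentions tensor products.

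Your main argument takes a different route. You identify $S \otimes_R S = S \otimes_{R'} S$, which holds because $R \to R'$ is surjective and so the defining relations coincide. You then apply \autoref{lem:epimorphism-alternative-characterizations}~\autoref{lem:epimorphism-alternative-characterizations.e}. This is also valid, and slightly more informative: it shows that $\mu_\varphi$ and $\mu_\iota$ are literally the same map, so $\ker(\mu_\varphi) = \ker(\mu_\iota)$. That lets you move other conditions on the diagonal ideal between $\varphi$ and $\iota$, such as formal unramifiedness or finite generation. This is the kind of transfer used later between $F_\varphi$ and $R[S^p] \hookrightarrow S$. The categorical argument, by contrast, transfers only the epimorphism property itself.
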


\begin{proof}
    The lemma follows from the fact that compositions of epimorphisms are epimorphisms, and if $A \to B \to C$ are ring maps such that $A \to C$ is an epimorphism, then $B \to C$ is an epimorphism.
\end{proof}

\begin{corollary}
    \label{cor:integral-epics-to-dim0-Noetherian}
    Let $\varphi \colon R \to S$ be an integral epimorphism of rings such that $S$ is an Artinian ring. Then $\varphi$ is surjective.
\end{corollary}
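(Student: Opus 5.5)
By \autoref{lem:epimorphism-extension-reduction}, I will reduce to the case where $\varphi$ is injective, i.e.\ $R \subset S$ is an integral epimorphic extension with $S$ Artinian. Here I use that $\im(\varphi) \to S$ is still integral and still an epimorphism, and that $\varphi$ is surjective exactly when $\im(\varphi) \to S$ is. The aim is then to show $R = S$. Since surjectivity of an $R$-module map can be checked after localizing at primes (or at maximal ideals) of $R$, it suffices to show $R_\frp \to S_\frp$ is surjective for every $\frp \in \Spec(R)$. Localization preserves epimorphisms by \autoref{cor:epimorphisms-formally-unramified}(1), and it preserves injectivity and integrality.

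Fix $\frp$ with $S_\frp \neq 0$; if $S_\frp = 0$ then also $R_\frp = 0$ by injectivity, and there is nothing to prove. By \autoref{lem:integral-epics}, $(R_\frp,\frp R_\frp) \to S_\frp$ is a local integral extension whose maximal ideal is $\frp S_\frp$, with residue field map an isomorphism (\autoref{rem:injective-on-spec}). Since $S$ is Artinian, $S_\frp$ is a localization of an Artinian ring and hence Artinian local. Its maximal ideal $\frp S_\frp$ is therefore nilpotent, say $(\frp S_\frp)^n = 0$, so $(\frp R_\frp)^n = 0$ inside $R_\frp \subset S_\frp$. Thus $I := \frp R_\frp$ is a nilpotent ideal of $R_\frp$.

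Next I use Nakayama in the nilpotent form, without any finite generation hypothesis. Set $C = \coker(R_\frp \to S_\frp)$. Because residue fields agree, $R_\frp/I \to S_\frp/IS_\frp = \kappa(\frp)$ is surjective, so $C/IC = 0$, i.e.\ $C = IC$. Iterating gives $C = I^nC = 0$. Hence $R_\frp \to S_\frp$ is surjective, which proves the claim.

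Alternatively, one could apply \autoref{lem:O-epimorphism} directly: $R_\frp \to S_\frp$ satisfies a version of (O), since $S_\frp \otimes M = 0$ gives $M = IM$, and so $M = 0$ by nilpotence of $I$; by \autoref{rem:weaker-than-O} this is enough. The main point to verify carefully is that nilpotence of $\frp S_\frp$ passes down to $\frp R_\frp$, which uses injectivity; this is why the reduction to $\im(\varphi)$ comes first.
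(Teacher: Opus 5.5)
Your proof is correct and follows the paper's argument step for step: reduce to the extension $\im(\varphi)\subset S$, localize at primes of $R$ using \autoref{lem:integral-epics}, deduce that $\frp R_\frp \subseteq \frp S_\frp$ is nilpotent, and conclude by the nilpotent form of Nakayama's lemma. The only differences are that you carry out the Nakayama step on the cokernel explicitly, where the paper cites the Stacks Project, and that you note the equivalent route through property (O).
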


\begin{proof}
    Replacing %$R$ by $\im(\varphi)$ and 
 $\varphi$ by the inclusion map $\im(\varphi) \subset S$ (which is an epimorphic integral extension), we may assume that $\varphi$ is an extension and show that it is an equality. %it suffices to show that if $R \subset S$ is an epimorphic integral extension and $S$ is Noetherian of dimension $0$, then $R = S$. 
    %Equivalently, for all $\frp \in \Spec(R)$, we must show that $R_\frp \hookrightarrow S_\frp$ is surjective. Thus, using 
    Then, by \autoref{lem:integral-epics}, we may further %replace $R$ by $R_\frp$ and $S$ by $S_\frp$ to 
    assume that $(S,\mathfrak{n})$ and $(R,\m)$ are local rings (with isomorphic residue fields). %$(R,\m) \hookrightarrow (S,\mathfrak{n})$ is a local integral epimorphic extension  of local rings where $S$ is Noetherian and dimension $0$. 
    %Note $R/\m \to S/\m S$ is an epimorphism by base change, and hence, an isomorphism by \autoref{cor:epimorphisms-from-fields}. 
    Since $S$ is Artinian, $\mathfrak{n}$ is nilpotent. Thus, $\m$ is nilpotent as well since $\m^n \subset \mathfrak{n}^n$ holds for all $n \geq 0$ (note $\m \subset \mathfrak{n}$ since $R \subset S$ is a local extension). Since $R/\m \to S/\m S$ is an isomorphism and $\m$ is a nilpotent ideal, $R \hookrightarrow S$ is surjective by \cite[\href{https://stacks.math.columbia.edu/tag/00DV}{Tag~00DV~(11)}]{stacks-project}.
\end{proof}

The property of being an epimorphism is local on the base.

\begin{lemma}
    \label{lem:epimorphisms-local-property}
    A ring map $\varphi \colon R \to S$ is an epimorphism if and only if for all prime ideals $\frp$ of $R$, $\varphi_\frp \colon R_\frp \to S_\frp$ is an epimorphism.
\end{lemma}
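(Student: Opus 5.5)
We will use the criterion of \autoref{lem:epimorphism-alternative-characterizations}~\autoref{lem:epimorphism-alternative-characterizations.d}: $\varphi$ is an epimorphism if and only if the $S$-module $N := S \otimes_R \coker(\varphi)$ vanishes. The plan is to check that this module is compatible with localization at primes of $R$, and then apply the fact that an $R$-module is zero if and only if all of its localizations at primes vanish.

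For the forward direction, assume $\varphi$ is an epimorphism. Then $\varphi_\frp = \id_{R_\frp} \otimes_R \varphi$ is a base change of $\varphi$ along $R \to R_\frp$. By \autoref{cor:epimorphisms-formally-unramified}~(1) it is therefore an epimorphism. This needs no further work.

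For the converse, fix $\frp \in \Spec(R)$. Localization is exact, so $\coker(\varphi_\frp) = \coker(\varphi)_\frp$. We then compute
\[
S_\frp \otimes_{R_\frp} \coker(\varphi_\frp) \cong (S \otimes_R R_\frp) \otimes_{R_\frp} (\coker(\varphi) \otimes_R R_\frp) \cong (S \otimes_R \coker(\varphi)) \otimes_R R_\frp = N_\frp ,
\]
where the middle isomorphism uses $R_\frp \otimes_R R_\frp \cong R_\frp$. This holds because localization is itself an epimorphism, or directly. If every $\varphi_\frp$ is an epimorphism, then the left-hand side vanishes for each $\frp$ by \autoref{lem:epimorphism-alternative-characterizations}~\autoref{lem:epimorphism-alternative-characterizations.d}. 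Hence $N_\frp = 0$ for all primes $\frp$ of $R$, where $N$ is viewed as an $R$-module. This forces $N = 0$, and so $\varphi$ is an epimorphism, again by \autoref{lem:epimorphism-alternative-characterizations}.

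There is no real obstacle here. The only point needing care is the identification of the tensor-product computation with the localization of $N$ as an $R$-module. In particular, the vanishing must be tested at primes of $R$, not of $S$, and the local-global principle is applied to the $R$-module $N$.
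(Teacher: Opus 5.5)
Your proof is correct and follows essentially the paper's argument. You use criterion (d) of the alternative-characterizations lemma, exactness of localization to identify $S_{\mathfrak{p}} \otimes_{R_{\mathfrak{p}}} \coker(\varphi_{\mathfrak{p}})$ with $(S \otimes_R \coker(\varphi))_{\mathfrak{p}}$, and the local-global principle for $R$-modules; the only difference is that you get the forward direction from stability of epimorphisms under base change, while the paper reads off both directions from the same chain of equivalences.
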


\begin{proof}
    We apply \autoref{lem:epimorphism-alternative-characterizations}~\autoref{lem:epimorphism-alternative-characterizations.d}. Localization commutes with cokernels. Now note that $S \otimes_R \coker(\varphi) = 0$ if and only if  
    \[
    (S \otimes_R \coker(\varphi)) \otimes_R R_\frp \cong S_\frp \otimes_{R_\frp} \coker(\varphi_\frp) = 0
    \]
    for all $\frp \in \Spec(R)$. This last condition is precisely that $\varphi_\frp$ is an epimorphism.
\end{proof}

The property of being an epimorphism descends under universally injective base change.

\begin{lemma}
\label{lem:epimorphism-descent}
    Let $R$ be a ring and $S_1 \to S_2$ be a map of $R$-algebras. Let $R \to T$ be universally injective. If the induced map $T \otimes_R S_1 \to T \otimes_R S_2$ is an epimorphism, then $S_1 \to S_2$ is an epimorphism.
\end{lemma}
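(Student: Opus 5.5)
We will use characterization \autoref{lem:epimorphism-alternative-characterizations}~\autoref{lem:epimorphism-alternative-characterizations.d}: a ring map is an epimorphism exactly when its target tensored with its cokernel vanishes. Write $\psi \colon S_1 \to S_2$ for the given map and $C = \coker(\psi)$, viewed as an $R$-module. The target of the base-changed map $\id_T \otimes_R \psi$ is $T \otimes_R S_2$.

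First we compute its cokernel. Since $T \otimes_R -$ is right exact, the cokernel of $\id_T \otimes_R \psi \colon T \otimes_R S_1 \to T \otimes_R S_2$ is $T \otimes_R C$. The hypothesis, combined with \autoref{lem:epimorphism-alternative-characterizations}, then gives
\[
0 = (T \otimes_R S_2) \otimes_{T \otimes_R S_1} (T \otimes_R C).
\]

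Next we rewrite this tensor product as a base change of the corresponding object for $\psi$. Tensor products over $R$ commute with base change along $R \to T$, so for an $S_1$-algebra $A$ and an $S_1$-module $M$ there is a canonical isomorphism
\[
(T \otimes_R A) \otimes_{T \otimes_R S_1} (T \otimes_R M) \cong T \otimes_R (A \otimes_{S_1} M).
\]
One checks this by writing $T \otimes_R N \cong T \otimes_R S_1 \otimes_{S_1} N$ for any $S_1$-module $N$ and using associativity of tensor products. Applying it with $A = S_2$ and $M = C$ yields
\[
T \otimes_R (S_2 \otimes_{S_1} C) = 0.
\]

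Finally, $R \to T$ is universally injective, so it satisfies (O) by \autoref{rem:ring-maps-satisfying-O}~\autoref{rem:ring-maps-satisfying-O.a}. Therefore $S_2 \otimes_{S_1} C = 0$, and $\psi$ is an epimorphism by \autoref{lem:epimorphism-alternative-characterizations}.

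The only step that needs care is the base-change identification in the second step, and it is routine. An alternative route is to use the multiplication map $\mu$ from \autoref{lem:epimorphism-alternative-characterizations}~\autoref{lem:epimorphism-alternative-characterizations.e}, whose formation commutes with base change to $T$. The map $\mu$ is always surjective, and its kernel $K$ satisfies $T \otimes_R K = 0$, since $T \otimes_R -$ is not a priori exact but $\mu$ is split as a map of $S_2$-modules. Property (O) then again forces $K = 0$.
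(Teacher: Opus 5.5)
Your proof is correct, but it takes a different route from the paper. The paper argues directly from the definition of an epimorphism. If $f,g\colon S_2\to S'$ agree on $S_1$, then $\id_T\otimes_R f$ and $\id_T\otimes_R g$ agree on $T\otimes_R S_1$, so they are equal by hypothesis. Then $f=g$ because $S'\to T\otimes_R S'$ is injective. That argument needs no characterization of epimorphisms and no tensor identities. However, it uses injectivity of $S'\to T\otimes_R S'$ for arbitrary $R$-algebras $S'$, which amounts to full universal injectivity (test it on square-zero extensions $R\oplus M$).

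You instead observe that the obstruction module $S_2\otimes_{S_1}\coker(\psi)$ from \autoref{lem:epimorphism-alternative-characterizations}~\autoref{lem:epimorphism-alternative-characterizations.d} commutes with base change along $R\to T$. So you only need $T\otimes_R -$ to detect vanishing of modules. Your argument therefore proves the lemma under the weaker hypothesis that $R\to T$ satisfies property (O). For instance, take $S_1=R$ and $T=R/I$ with $I^n=0$, so that $M=IM$ forces $M=I^nM=0$. This recovers \autoref{cor:epic-upto-nilpotent-ideal}, and the paper proves that corollary by exactly your Nakayama-type argument. So your approach unifies the two statements, while the paper's is shorter and purely formal.

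One small point: $C$ should be regarded as an $S_1$-module, not merely an $R$-module, for $S_2\otimes_{S_1}C$ to make sense. This is automatic, since $\psi$ is $S_1$-linear. Your alternative via $\ker(\mu_\psi)$ is also valid: the sequence $0\to\ker(\mu_\psi)\to S_2\otimes_{S_1}S_2\to S_2\to 0$ is split, so it stays exact after applying $T\otimes_R-$.
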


\begin{proof}
   The result follows from the following observation: if $f, g \colon S_2 \to S'$ are two ring maps such that $\id_T \otimes_R f = \id_T \otimes_R g$, then $f = g$. Indeed, this last assertion follows from the commutative diagram,
   \[
   \begin{tikzcd}[column sep=huge]
	S_2 & {S'} \\
	{T\otimes_RS_2} & {T\otimes_RS'}
	\arrow["\tau", from=1-1, to=1-2]
	\arrow[hook, from=1-1, to=2-1]
	\arrow[hook, from=1-2, to=2-2]
	\arrow["{\id_T\otimes_R\tau}"', from=2-1, to=2-2],
\end{tikzcd}
\]
  where $\tau = f$ or $\tau = g$, because $S' \to T \otimes_R S'$ is injective.
\end{proof}

\autoref{lem:O-epimorphism} provides a situation in which an epimorphism of rings is surjective. We will recall some additional situations in which ring epimorphisms are surjective.

\begin{proposition}
    \label{prop:epic-implies-surjective}
    Let $\varphi \colon R \to S$ be an epimorphism of rings. Suppose $S \neq 0$.
    \begin{enumerate}
        \item If $S$ has the property that every non-zero quotient $R$-module of $S$ admits a non-zero quotient that is a cyclic $R$-module,\footnote{In the terminology of \cite[Expos\'e~3]{SamuelSeminar}, $S$ satisfies property $(C)$.} then $\varphi$ is surjective.\label{prop:epic-implies-surjective.a}
        \item If $\varphi$ is finite, then $\varphi$ is surjective.\label{prop:epic-implies-surjective.b}
        \item If $\varphi$ is integral and $\im(\varphi)$ is Noetherian, then $\varphi$ is surjective. \label{prop:epic-implies-surjective.c}
    \end{enumerate}
\end{proposition}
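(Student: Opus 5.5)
For \autoref{prop:epic-implies-surjective.a}, I will use \autoref{rem:weaker-than-O}. It suffices to show that if $M$ is a quotient $R$-module of $S$ with $S \otimes_R M = 0$, then $M = 0$. Apply this to $M = \coker(\varphi)$, which is a quotient of $S$ and satisfies $S \otimes_R \coker(\varphi) = 0$ by \autoref{lem:epimorphism-alternative-characterizations}~\autoref{lem:epimorphism-alternative-characterizations.d}.

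Suppose for contradiction that $M \neq 0$. By hypothesis $M$ has a non-zero cyclic quotient $R/I$, which is still a quotient of $S$. Right exactness of $S \otimes_R -$ gives $S \otimes_R R/I = S/IS = 0$, so $1 \in IS$. On the other hand, the surjection $S \twoheadrightarrow R/I$ is $R$-linear, and $S/IS \to R/I$ is surjective because $R/I$ is killed by $I$. Hence $S/IS \neq 0$, a contradiction. So $\coker(\varphi) = 0$, i.e.\ $\varphi$ is surjective. The only care needed is that the composite $S \to M \to R/I$ factors through $S/IS$, which is clear.

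For \autoref{prop:epic-implies-surjective.b}, I will check that a finite $R$-module $S$ has property (a). Any non-zero quotient $N$ of $S$ is a non-zero finitely generated $R$-module. Choose a maximal ideal $\m$ in the support of $N$. By Nakayama, $N/\m N \neq 0$, and this is a non-zero vector space over $R/\m$, so it surjects onto $R/\m$, which is cyclic. Then (a) applies.

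For \autoref{prop:epic-implies-surjective.c}, I will reduce via \autoref{lem:epimorphism-extension-reduction} to an integral epimorphic extension $A = \im(\varphi) \subset S$ with $A$ Noetherian. Write $S$ as the filtered union of finite $A$-subalgebras $S_\lambda$. The plan is to show $S$ is finite over $A$ and then apply (b). This is the main obstacle, since an integral extension of a Noetherian ring need not be finite.

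First I would localize, using \autoref{lem:epimorphisms-local-property}, and then use \autoref{lem:integral-epics}. After localizing, $A_\frp \to S_\frp$ is local with $\frp S_\frp$ maximal and trivial residue field extension. Hence $S_\frp = A_\frp + \frp S_\frp$, where $\frp$ is finitely generated. I would then aim to show $S_\frp$ is finite over $A_\frp$. One route is to complete at $\frp$: since $\frp$ is finitely generated, $S_\frp/\frp^n S_\frp$ is generated over $A_\frp$ by products of generators, so each $S_\frp/\frp^nS_\frp$ is cyclic over $A_\frp/\frp^n$, and $A_\frp/\frp^n \to S_\frp/\frp^n S_\frp$ is surjective. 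Alternatively, I would use Noetherian induction on $A$ together with \autoref{cor:integral-epics-to-dim0-Noetherian} in dimension $0$, passing to $A/\frp$ and its integral extensions.

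If these direct approaches stall, I would fall back on the known result in Samuel's seminar that integral extensions of Noetherian rings have property (C) when they are epimorphic. Concretely, one shows that a non-zero quotient $N$ of $S$ has a prime $\frp$ with $N_\frp/\frp N_\frp \neq 0$. Since $\kappa(\frp) \to S \otimes \kappa(\frp)$ is an isomorphism, $N_\frp/\frp N_\frp$ is a cyclic $\kappa(\frp)$-module. Then, using that $A$ is Noetherian, choose $\frp$ maximal among the relevant primes to descend this to a cyclic quotient of $N$.
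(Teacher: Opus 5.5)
Your parts (a) and (b) are correct and are essentially the paper's argument. The paper maps $S\otimes_R\coker(\varphi)$ onto $R/I\otimes_R R/I\neq 0$, which is the same as your observation that $S/IS\twoheadrightarrow R/I$. For (b), the paper simply notes that a finite $S$ has property (C), and your Nakayama argument spells this out.

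Part (c), however, has a genuine gap. After reducing to an integral epimorphic extension $A=\im(\varphi)\subseteq S$ with $A$ Noetherian, everything comes down to showing that $C=S/A$ vanishes. None of your three routes supplies the input needed for that.

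\begin{itemize}
\item \emph{Completion.} This route only yields $S_\frp=A_\frp+\frp^n S_\frp$ for all $n$, i.e.\ $C_\frp=\frp C_\frp$. Since $C$ is not known to be finitely generated, and $S_\frp$ is not known to be $\frp$-adically separated, neither Nakayama nor a Krull-intersection argument gives $C_\frp=0$ or finiteness of $S_\frp$. Writing $S$ as a union of finite subalgebras $S_\lambda$ does not help either, because $A\to S_\lambda$ is not an epimorphism, so (b) does not apply to it.
\item \emph{Noetherian induction.} As stated, this at best gives $S=A+IS$ for every nonzero ideal $I$. That is again only a divisibility condition on $C$, and such conditions do not force vanishing: the $\mathbf{Z}$-module $\mathbf{Q}/\mathbf{Z}$ satisfies them.
\item \emph{Fallback.} This is circular. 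For $N=\coker(\varphi)$ one has $N_\frp/\frp N_\frp=\coker(\kappa(\frp)\to\kappa(\frp)\otimes_A S)=0$ for every prime $\frp$, by \autoref{cor:epimorphisms-from-fields}. So the prime you propose to find exists only if $N=0$ already. Invoking property (C) for integral epimorphic extensions of Noetherian rings is just a restatement of (c).
\end{itemize}

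The missing ingredient is something that forces $A\to S$ to be pure, or at least to satisfy property (O). The paper gets this from the direct summand theorem in Ohi's reformulation \cite{OhiDirectSummand}. An integral extension of a Noetherian ring descends flatness, hence satisfies (O) by \autoref{rem:ring-maps-satisfying-O}~\autoref{rem:ring-maps-satisfying-O.b}. Then \autoref{lem:O-epimorphism} gives $A=S$.

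If you want to keep your Noetherian-induction strategy, it can be completed with a different input, namely Nagata's finiteness of normalization.
\begin{enumerate}
\item By localizing and by faithfully flat base change to the completion, reduce to $A$ complete local.
\item Choose an ideal $I$ maximal among those for which some integral epimorphism out of $A/I$ is not surjective, and replace $A$ by $A/I$. The bad map is then injective. Nilpotent Nakayama applied to the nilradical makes $A$ reduced.
\item The map $Q(A)\to S\otimes_A Q(A)$ is an injective epimorphism out of a finite product of fields. Applying \autoref{cor:epimorphisms-from-fields} factorwise, it is an isomorphism.
\item Let $N$ be the kernel of $S\to Q(A)$. Then $A\subseteq S/N\subseteq\overline{A}$, and $\overline{A}$ is finite over $A$ since complete local rings are Nagata.
\item Hence $S/N=A$ by (b). Then $S\to S/N=A$ is an $A$-linear retraction, so $A\to S$ is universally injective, and \autoref{cor:universally-injective-epimorphism} gives $A=S$.
\end{enumerate}
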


\begin{proof}
    We include the argument for \autoref{prop:epic-implies-surjective.a}, which is shown in \cite[Expos\'e~3, Proposition~7]{SamuelSeminar}. If $\coker(\varphi) \neq 0$, then $S \otimes_R \coker(\varphi) \neq 0$ follows upon choosing a non-zero cyclic $R$-module $R/I$ and a surjection $\coker(\varphi) \twoheadrightarrow R/I$. This is because then $S \otimes_R \coker(\varphi)$ surjects onto $R/I \otimes_R R/I \neq 0$. However, $S \otimes_R \coker(\varphi) \neq 0$ contradicts $\varphi$ being an epimorphism by \autoref{lem:epimorphism-alternative-characterizations}.
    
        If $S$ is a finite $R$-module, then we are in the situation \autoref{prop:epic-implies-surjective.a}. Hence, \autoref{prop:epic-implies-surjective.b} follows from \autoref{prop:epic-implies-surjective.a}.
    
       Finally, \autoref{prop:epic-implies-surjective.c} was originally proved in \cite[Expos\'e~7, Proposition~3.8~(i)]{SamuelSeminar}. We can provide a different argument because of the direct summand theorem \cite{HochsterCyclicPurity, AndreDirectsummandconjecture}. Using \autoref{lem:epimorphism-extension-reduction}, it is enough to show that if $R \subset S$ is an integral extension that is an epimorphism and $R$ is Noetherian, then $R = S$. Now, as a consequence of Ohi's equivalent reformulation of the direct summand theorem \cite{OhiDirectSummand}, we know that an integral extension with a Noetherian base descends flatness and hence satisfies property (O); see \autoref{rem:ring-maps-satisfying-O}~\autoref{rem:ring-maps-satisfying-O.b}. Thus, $R = S$ follows from \autoref{lem:O-epimorphism}.
\end{proof}

% \begin{corollary}
%     \label{cor:integral-epimorphism-from-Noetherian}
%    Let $\varphi \colon R \to S$ be an integral ring map such that $\im(\varphi)$ is Noetherian. If $\varphi$ is an epimorphism, then $\varphi$ is surjective.
% \end{corollary}

% \begin{proof}
%     $\im(\varphi) \hookrightarrow S$ is an integral extension and an epimorphism by \autoref{lem:epimorphism-extension-reduction}. Then $\im(\varphi) \hookrightarrow S$ is an isomorphism by \autoref{prop:epic-implies-surjective}~\autoref{prop:epic-implies-surjective.c} since $\im(\varphi)$ is Noetherian and integral ring maps are universally closed on $\Spec$.
% \end{proof}

\begin{remark}
    \autoref{prop:epic-implies-surjective}~\autoref{prop:epic-implies-surjective.c} fails without the Noetherian hypothesis. In \cite[Expos\'e~8]{SamuelSeminar}, Lazard constructs, for a field $k$, an epimorphism of local $k$-algebras $\varphi \colon (C,\m) \to (D,\eta)$ of dimension $0$ whose residue fields are isomorphic to $k$ and such that $\varphi$ is not surjective. Note that $\varphi$ is automatically integral. Indeed, given $x \in D$, write $x = \lambda + y$, for $\lambda \in k$ and $y \in \eta$. Then $\lambda$ is integral over $C$ because $k \subset C$ and $y$ is integral over $C$ since it is a nilpotent element. Thus, $x$ is integral over $C$ as well. In \autoref{sec:epimorphic-Frobenius-not-surjective}, we show that even purely inseparable ring epimorphisms may fail to be surjective.
\end{remark}

Finally, we recall the following ``geometric'' characterization of ring epimorphisms.

\begin{proposition}\cite[Expos\'e 4,~Proposition~1.5]{SamuelSeminar}
\label{prop:necessary-sufficient-characterization-epimorphism}
A ring map $\varphi\:R \to S$ is an epimorphism if and only if it satisfies the following conditions:
\begin{enumerate}[(i)]
    \item $\Spec(S) \to \Spec(R)$ is injective.\label{prop:necessary-sufficient-characterization-epimorphism.i}
    \item For all $\frq \in \Spec(S)$, if $\frp = R \cap \frq$ then $\kappa(\frp) \hookrightarrow \kappa(\frq)$ is %an isomorphism.
    purely inseparable.\label{prop:necessary-sufficient-characterization-epimorphism.ii}
    \item %$\ker(\mu_{\varphi})$ 
    $\ker(\mu_{\varphi})$ is a finitely generated ideal of $S \otimes_R S$.\label{prop:necessary-sufficient-characterization-epimorphism.iii}
    \item $R \to S$ is formally unramified; that is, $\ker(\mu_{\varphi}) = \ker(\mu_{\varphi})^2$.\label{prop:necessary-sufficient-characterization-epimorphism.iv} %(see \autoref{eqn.Definition_Diagonal_Ideal}). %$\ker(\mu_{\varphi}) = \ker(\mu_{\varphi})^2$.
\end{enumerate}
\end{proposition}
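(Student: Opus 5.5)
Write $I = \ker(\mu_\varphi) \subset S \otimes_R S$. The forward direction is fast, and the converse is where the work lies.

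\emph{Necessity.} Assume $\varphi$ is an epimorphism. By \autoref{lem:epimorphism-alternative-characterizations}, $\mu_\varphi$ is an isomorphism, so $I = 0$. Then (iii) and (iv) are immediate. For (i) and (ii), \autoref{rem:injective-on-spec} gives that $\Spec(S) \to \Spec(R)$ is injective. It also gives that each nonzero fiber $\kappa(\frp) \otimes_R S$ is isomorphic to $\kappa(\frp)$, so the residue field extensions are isomorphisms and in particular purely inseparable.

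\emph{Sufficiency.} Assume (i)--(iv). Since $I$ is finitely generated and $I = I^2$, the determinant trick (Nakayama's lemma) produces an idempotent $e \in I$ with $I = e(S \otimes_R S)$. Hence $S \otimes_R S \cong S \times (S\otimes_R S)[e^{-1}]$ as rings, and $\Spec(S) \to \Spec(S \otimes_R S)$ is an open and closed immersion. It suffices to show $e = 0$, that is, $V(I) = \Spec(S \otimes_R S)$. Since $e$ is idempotent, $e = 0$ exactly when $e$ lies in every prime, which is the same as saying the diagonal is surjective on spectra.

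\emph{Key step.} Let $\frP$ be a prime of $S \otimes_R S$. Let $\frq_1, \frq_2$ be its contractions to $S$ along the two canonical maps. Both lie over the same prime $\frp$ of $R$, so (i) forces $\frq_1 = \frq_2 =: \frq$. The prime $\frP$ then corresponds to a prime of $\kappa(\frq) \otimes_{\kappa(\frp)} \kappa(\frq)$. By (ii) this ring is a tensor product of a purely inseparable extension with itself, so every element of its kernel $J$ of multiplication is nilpotent: $(a\otimes 1 - 1\otimes a)^{p^n} = a^{p^n}\otimes 1 - 1\otimes a^{p^n} = 0$ for $n \gg 0$ (in characteristic $0$ the extension is trivial and $J=0$). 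Hence the ring has a unique prime, namely $J$. The image of $I$ in it is $J$, because $I$ is generated by the elements $s\otimes1-1\otimes s$. Therefore $\frP \supseteq I$, so $e \in \frP$.

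The main obstacle I expect is this bookkeeping: verifying that primes of $S\otimes_R S$ with given contractions correspond to primes of the fiber tensor product $\kappa(\frq)\otimes_{\kappa(\frp)}\kappa(\frq)$. This is the standard description of the fibers of $\Spec$ of a tensor product, and I would quote it from the Stacks project rather than reprove it.

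Conclusion: $e$ lies in every prime and is idempotent, so $e = 0$. Hence $I = 0$, $\mu_\varphi$ is an isomorphism, and $\varphi$ is an epimorphism by \autoref{lem:epimorphism-alternative-characterizations}.
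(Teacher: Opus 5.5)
Your proof is correct and follows the paper's argument. Necessity is handled identically. For sufficiency, the paper likewise shows that $I=\ker(\mu_\varphi)$ lies in every prime of $S\otimes_R S$ and then uses (iii) and (iv) to conclude $I=0$. The paper gets the first step by citing that (i) and (ii) make $\Spec(S)\to\Spec(R)$ radicial, so the diagonal is surjective on spectra; your fiber-ring computation verifies this by hand. The only difference is cosmetic: the paper notes that a finitely generated nil ideal is nilpotent, so $I=I^2=\cdots=I^n=0$, whereas you first extract an idempotent generator $e$ and observe that an idempotent lying in every prime must be zero.
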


\begin{proof}
    Suppose $\varphi$ is an epimorphism. Then \autoref{prop:necessary-sufficient-characterization-epimorphism.i} and \autoref{prop:necessary-sufficient-characterization-epimorphism.ii} hold by \autoref{rem:injective-on-spec}, and \autoref{prop:necessary-sufficient-characterization-epimorphism.iii} and \autoref{prop:necessary-sufficient-characterization-epimorphism.iv} hold by \autoref{lem:epimorphism-alternative-characterizations} \autoref{lem:epimorphism-alternative-characterizations.e}. Conversely, suppose conditions \autoref{prop:necessary-sufficient-characterization-epimorphism.i}-\autoref{prop:necessary-sufficient-characterization-epimorphism.iv} hold. Conditions \autoref{prop:necessary-sufficient-characterization-epimorphism.i} and \autoref{prop:necessary-sufficient-characterization-epimorphism.ii} imply $\operatorname{Spec}(S) \to \operatorname{Spec}(R)$ is radicial \cite[\href{https://stacks.math.columbia.edu/tag/01S3}{Tag 01S3}]{stacks-project}, so $\mu_{\varphi} : S \otimes _R S \to S$ induces a surjection on spectra, see \cite[\href{https://stacks.math.columbia.edu/tag/01S4}{Tag 01S4}]{stacks-project}. Thus $\ker (\mu_{\varphi})$ is a locally nilpotent ideal. By \autoref{prop:necessary-sufficient-characterization-epimorphism.iii}, it follows that $\ker (\mu_{\varphi})$ is actually nilpotent, and then combining with \autoref{prop:necessary-sufficient-characterization-epimorphism.iv}, we see that $\ker (\mu_{\varphi}) = 0$, hence $\varphi$ is an epimorphism by \autoref{lem:epimorphism-alternative-characterizations} \autoref{lem:epimorphism-alternative-characterizations.e}. 
\end{proof}

\begin{remark}
\autoref{prop:necessary-sufficient-characterization-epimorphism}, as stated, is slightly different than \cite[Expos\'e 4,~Proposition~1.5]{SamuelSeminar} because the latter assumes in \autoref{prop:necessary-sufficient-characterization-epimorphism.ii} that for all $\frq \in \Spec(S)$, if $\frp = R \cap \frq$, then $\kappa(\frp) \hookrightarrow \kappa(\frq)$ is an isomorphism. Note that if $\varphi$ is an epimorphism, then all extensions of residue fields will be isomorphisms by \autoref{rem:injective-on-spec}.
\end{remark}

\section{Characterizing B-Nil Formally Unramified Maps}
\label{sec:main}

Let $p> 0$ be a prime number. A map of $R \to S$ of $\mathbf{F}_p$-algebras is \emph{b-nil formally unramified} (resp. \emph{b-nil formally smooth}) if for every solid commutative square in the category of $\mathbf{F}_p$-algebras where $I \subset A$ is an ideal with 
\[
I^{[p]} \coloneqq \langle a^p \mid a \in A \rangle = 0,
\]
there is at most one (resp. at least one) dashed arrow $S \to A$ making the diagram 
\begin{equation}
    \label{equn-thediagram}
\begin{tikzcd}
	{A/I} & S \\
	A & R
	\arrow[from=1-2, to=1-1]
	\arrow[dashed, from=1-2, to=2-1]
	\arrow[from=2-1, to=1-1]
	\arrow[from=2-2, to=1-2]
	\arrow[from=2-2, to=2-1]
\end{tikzcd}
\end{equation}
commute. We say $R \to S$ is \emph{b-nil formally \'etale} if it is b-nil formally unramified and b-nil formally smooth.

\begin{example}
\label{example:episareunramified}
    If $R \to S$ is an epimorphism of $\mathbf{F}_p$-algebras, then $R \to S$ is b-nil formally unramified. 
\end{example}

%If $\varphi: R \to S$ is a ring map, write $\mu = \mu_{S/R} = \mu_\varphi : S \otimes _R S \to S$ for the multiplication map. It is surjective with kernel the ideal generated by elements $s \otimes 1 - 1 \otimes s$. 

\begin{lemma}
\label{lem:basic-properties-b-nil-formally-unramified}
Let $\varphi \:R \to S$, $\psi\:S \to T$ be maps of $\mathbf{F}_p$-algebras.
    \begin{enumerate}
        \item If $R \to T$ is b-nil formally unramified, then $S \to T$ is b-nil formally unramified.\label{lem:basic-properties-b-nil-formally-unramified.a}
        \item Let $A$ be an $\mathbf{F}_p$-algebra and $I$ be an ideal. Then $A \twoheadrightarrow A/I$ is b-nil formally \'etale if and only if $I = I^{[p]}$.\label{lem:basic-properties-b-nil-formally-unramified.b}
        \item \label{lem:basic-properties-b-nil-formally-unramified.c} The following are equivalent:
        \begin{enumerate}[(i)]
            \item $\varphi$ is b-nil formally unramified.\label{lem:basic-properties-b-nil-formally-unramified.c.i}
            \item 
            %$\ker(\mu_{S/R})^{[p]} = \ker(\mu_{S/R})$.
            $\ker(\mu_{\varphi}) = \ker(\mu_{\varphi})^{[p]}$.
            \label{lem:basic-properties-b-nil-formally-unramified.c.ii}
            \item $\mu_{\varphi}$ is b-nil formally \'etale.\label{lem:basic-properties-b-nil-formally-unramified.c.iii}
            \item $\mu_{\varphi}$ is b-nil formally smooth. \label{lem:basic-properties-b-nil-formally-unramified.c.iv}
        \end{enumerate}
    \end{enumerate}
\end{lemma}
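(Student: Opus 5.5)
Each part is a direct check against the lifting diagram \eqref{equn-thediagram}. Throughout, "ideal $I$ with $I^{[p]}=0$" means $a^p=0$ for all $a\in I$.

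For \autoref{lem:basic-properties-b-nil-formally-unramified.a}, take a square for $S \to T$ with test pair $(A,I)$. It becomes a square for $R \to T$ by precomposing the bottom map $S \to A$ with $\psi$. Two lifts $T \to A$ that agree on $S$ then agree on $R$, so they coincide by hypothesis.

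For \autoref{lem:basic-properties-b-nil-formally-unramified.b}, suppose first $I=I^{[p]}$. Given a square with test pair $(B,J)$, $J^{[p]}=0$, and maps $A \to B$, $A/I \to B/J$, the image of $I$ lands in $J$. Hence each $a^p$ with $a \in I$ maps to $0$, so $I^{[p]}$ maps to $0$, and therefore $I$ maps to $0$. The map $A \to B$ thus factors uniquely through $A/I$, and this factorization is compatible with $B \to B/J$ because $A \to A/I$ is surjective. Uniqueness of lifts is automatic from surjectivity. Conversely, apply b-nil formal smoothness to the test pair $(A/I^{[p]},\, I/I^{[p]})$, which satisfies $(I/I^{[p]})^{[p]}=0$. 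The identity of $A/I$ lifts to a map $A/I \to A/I^{[p]}$ under $A$, i.e.\ a retraction of the surjection $A/I^{[p]} \to A/I$ that is compatible with the map from $A$. This forces $I/I^{[p]}=0$.

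For \autoref{lem:basic-properties-b-nil-formally-unramified.c}, I would prove (ii)$\Leftrightarrow$(iii)$\Leftrightarrow$(iv) directly from \autoref{lem:basic-properties-b-nil-formally-unramified.b}, applied with $A=S\otimes_R S$ and $I=\ker(\mu_\varphi)$. The proof of (b) in fact shows that smoothness alone forces $I=I^{[p]}$, which in turn gives étaleness. The substantive equivalence is (i)$\Leftrightarrow$(ii).

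For (ii)$\Rightarrow$(i), let $f,g \colon S \to A$ be two lifts in a square with $I^{[p]}=0$. They agree on $R$, so they induce $h=f\otimes g\colon S\otimes_R S \to A$. Since $f$ and $g$ agree modulo $I$, $h$ sends the generators $s\otimes1-1\otimes s$ of $\ker(\mu_\varphi)$ into $I$. Hence $h(\ker\mu_\varphi)=h(\ker(\mu_\varphi)^{[p]})\subset I^{[p]}A=0$, and so $f(s)-g(s)=h(s\otimes 1-1\otimes s)=0$.

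For (i)$\Rightarrow$(ii), use the universal test pair $A=(S\otimes_R S)/\ker(\mu_\varphi)^{[p]}$ with $I=\ker(\mu_\varphi)/\ker(\mu_\varphi)^{[p]}$. Let the vertical map $S \to A/I\cong S$ be the identity. The two canonical maps $S\to A$ are both lifts, so by uniqueness they are equal. Hence every $s\otimes1-1\otimes s$ lies in $\ker(\mu_\varphi)^{[p]}$, giving $\ker(\mu_\varphi)\subseteq\ker(\mu_\varphi)^{[p]}$; the reverse inclusion is trivial.

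The main point of care is checking that the test ideals used actually satisfy the bounded condition $I^{[p]}=0$, which in these cases holds by construction. Everything else is formal.
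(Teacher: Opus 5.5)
Your proof is correct, but it argues directly from the lifting diagrams at two points where the paper uses other tools. For (b), the paper does not work with the diagram. It cites \cite[Theorem~3.13]{DattaOlanderRelFrobIso}, which says b-nil formally \'etale is equivalent to the relative Frobenius being an isomorphism, and identifies the relative Frobenius of $A \twoheadrightarrow A/I$ with $A/I^{[p]} \twoheadrightarrow A/I$. Your argument with the test pair $(A/I^{[p]}, I/I^{[p]})$ is self-contained and gives slightly more: b-nil formal smoothness of $A \twoheadrightarrow A/I$ alone already forces $I = I^{[p]}$. (The lift you obtain is a section of $A/I^{[p]} \to A/I$, not a retraction, but as you say it is the compatibility with $A$ that does the work.)

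For (c), the paper proves (i)$\Leftrightarrow$(iv) formally. By the universal property of $S \otimes_R S$, a pair of lifts $S \to A$ is the same as a square over $\mu_\varphi$. The paper then upgrades (iv) to (iii) using (a), since $R \to S$ factors through $\mu_\varphi$, and deduces (ii)$\Leftrightarrow$(iii) from (b). You instead prove (i)$\Leftrightarrow$(ii) element-wise. You use the induced map $f \otimes g$ for one direction and the universal test pair $\bigl((S\otimes_R S)/\ker(\mu_\varphi)^{[p]},\ \ker(\mu_\varphi)/\ker(\mu_\varphi)^{[p]}\bigr)$ for the other. Your strengthened (b) then gives (iv)$\Rightarrow$(ii) directly, so (a) is never used in (c).

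The paper's route is shorter given the external theorem and presents (c) as purely formal. Yours avoids any dependence on the relative Frobenius theory. It also shows explicitly that $\ker(\mu_\varphi)/\ker(\mu_\varphi)^{[p]}$ is exactly the obstruction to uniqueness of lifts; this is the module $\mathfrak{B}_\varphi$ that drives the main theorem later.
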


\begin{proof}
    \autoref{lem:basic-properties-b-nil-formally-unramified.a} is standard. \autoref{lem:basic-properties-b-nil-formally-unramified.b} follows from \cite[Theorem~3.13]{DattaOlanderRelFrobIso} because the relative Frobenius of $A \to A/I$ can be identified with the canonical surjection $A/I^{[p]} \twoheadrightarrow A/I$, and this last map is an isomorphism precisely when $I = I^{[p]}$.

    Finally, for \autoref{lem:basic-properties-b-nil-formally-unramified.c},
    we have \autoref{lem:basic-properties-b-nil-formally-unramified.c.i}
   $\Longleftrightarrow$ \autoref{lem:basic-properties-b-nil-formally-unramified.c.iv} because giving a pair of dashed arrows that make the diagram \autoref{equn-thediagram} commute is equivalent to providing a solid commutative square 
\[\begin{tikzcd}
	{A/I} & S \\
	A & {S\otimes _R S}
	\arrow[from=1-2, to=1-1]
	\arrow["\beta", dashed, from=1-2, to=2-1]
	\arrow[from=2-1, to=1-1]
	\arrow["{\mu_{S/R}}"', from=2-2, to=1-2]
	\arrow[from=2-2, to=2-1]
\end{tikzcd}\]
    by the universal property of the tensor product, and the two dashed arrows are equal if and only if there exists a dashed arrow $\beta$ that makes the corresponding diagram above commute. 

    Now clearly \autoref{lem:basic-properties-b-nil-formally-unramified.c.iii} implies \autoref{lem:basic-properties-b-nil-formally-unramified.c.iv}. If the equivalent conditions \autoref{lem:basic-properties-b-nil-formally-unramified.c.i} and \autoref{lem:basic-properties-b-nil-formally-unramified.c.iv} hold, then so does \autoref{lem:basic-properties-b-nil-formally-unramified.c.iii}: It suffices to show that $\mu_{\varphi}$ is b-nil formally unramified, and this is true by \autoref{lem:basic-properties-b-nil-formally-unramified.a} since the b-nil formally unramified map $R \to S$ factors through $\mu_{\varphi}$. Finally, the equivalence of \autoref{lem:basic-properties-b-nil-formally-unramified.c.ii} and \autoref{lem:basic-properties-b-nil-formally-unramified.c.iii} holds by \autoref{lem:basic-properties-b-nil-formally-unramified.b} because $\mu_{\varphi}$ is surjective. 
\end{proof}

%To give our characterization of b-nil formal unramification in positive characteristic, we recall the \emph{(relative) Frobenius morphism} attached to any homomorphism $\varphi\:R \to S$ of $\bF_p$-algebras. This is the morphism $F_{\varphi}$ defined by the Cartesian diagram
%\[
%\begin{tikzcd}
%	R & S & \\
%	{R} & {S^{(p)}} \\
%	&& {S}
%	\arrow["{\varphi}",from=1-1, to=1-2]
%	\arrow["{F_R}"', from=1-1, to=2-1]
%	\arrow[from=1-2, to=2-2]
%	\arrow["{F_S}", curve={height=-18pt}, from=1-2, to=3-3]
%	\arrow[from=2-1, to=2-2]
%	\arrow["{\varphi}",curve={height=18pt}, from=2-1, to=3-3]
%	\arrow["{F_{\varphi}}", from=2-2, to=3-3]
%    \end{tikzcd}
%    \]
%In other words, $F_{\varphi} (r \otimes s) = \varphi(r)s^p$. We denote the image of $F_{\varphi}$ as $R[S^p] \subset S$. 

Let $\varphi \colon R \to S$ be a homomorphism of $\mathbf{F}_p$-algebras and $F_{\varphi}$ the associated relative Frobenius. The cokernel of $F_{\varphi}$ is often denoted by $\sB^1_{\varphi}$ in the literature because of its relationship with Kähler differentials (and the de Rham complex) through the Cartier isomorphisms. This is, of course, an $S^{(p)}$-module. To make it an $S$-module, we pull it back along $F_{\varphi}$ to get
\[
 \mathfrak{B}_{\varphi} \coloneqq S \otimes_{S^{(p)}} \coker(F_\varphi)  \cong \ker(\mu_{F_{\varphi}}) \cong \ker(\mu_{\varphi})/\ker(\mu_{\varphi})^{[p]}. 
\]
As we will see shortly, this module plays a role in b-nil formal unramification that is analogous to the role played by $\Omega_{\varphi}$ in the case of formal unramification. Thus, it is a sort of Frobenius-theoretic cotangent module.

\begin{lemma}
\label{lem:diagonal-relative-Frobenius}
Let $\varphi \colon R \to S$ be a map of $\mathbf{F}_p$-algebras. % and $F_{\varphi} : S^{(p)} \to S$ be its Frobenius. 
Then
\[
\ker(\mu_{F_{\varphi}})^{[p]} = 0.
\]
%Thus $F_{\varphi} (r \otimes s) = \varphi(r)s^p$. Write $J$ for the kernel of $\mu_{F_\varphi}$. Then $J^{[p]} = 0$.
\end{lemma}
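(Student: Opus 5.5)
We need to show that $J := \ker(\mu_{F_\varphi})$, the kernel of the multiplication map $S \otimes_{S^{(p)}} S \to S$, satisfies $J^{[p]} = 0$. By \eqref{eqn.Definition_Diagonal_Ideal}, $J$ is generated by the elements $s \otimes 1 - 1 \otimes s$ with $s \in S$. The ideal $J^{[p]}$ is generated by $p$-th powers of elements of $J$, so the plan is to reduce to generators and then compute.

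\emph{Reduction to generators.} Since the ring has characteristic $p$, the Frobenius is additive. Hence for $x = \sum_i a_i (s_i \otimes 1 - 1 \otimes s_i)$ we get
\[
x^p = \sum_i a_i^p (s_i \otimes 1 - 1 \otimes s_i)^p .
\]
So it suffices to show $(s \otimes 1 - 1 \otimes s)^p = 0$ for every $s \in S$. By additivity of Frobenius again,
\[
(s\otimes 1 - 1 \otimes s)^p = s^p \otimes 1 - 1 \otimes s^p .
\]

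\emph{Main step.} It remains to show $s^p \otimes 1 = 1 \otimes s^p$ in $S \otimes_{S^{(p)}} S$. This holds because $s^p = F_\varphi(1 \otimes s)$, where $1 \otimes s \in S^{(p)} = F_*R \otimes_R S$; this is the commutativity $F_\varphi \circ (F_R \otimes_R \id_S) = F_S$ in the defining diagram. Elements in the image of $F_\varphi$ can be moved across the tensor sign over $S^{(p)}$. Concretely, with $t = 1 \otimes s \in S^{(p)}$,
\[
F_\varphi(t) \otimes 1 = t\cdot(1 \otimes 1) = 1 \otimes F_\varphi(t).
\]
Hence $s^p \otimes 1 - 1 \otimes s^p = 0$, and therefore $J^{[p]} = 0$.

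There is no real obstacle here. The only points needing care are that $J^{[p]}$ is generated by $p$-th powers of all elements of $J$, not just of the generators, which the additivity of Frobenius handles, and that $s^p$ lies in $\im(F_\varphi)$.
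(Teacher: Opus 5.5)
Your proof is correct and follows the paper's argument. You reduce to the generators $s\otimes 1 - 1\otimes s$ of $\ker(\mu_{F_\varphi})$ and note that $s^p = F_\varphi(1\otimes s)$ lies in the image of $F_\varphi$, so $s^p\otimes 1 = 1\otimes s^p$ in $S\otimes_{S^{(p)}} S$; your use of additivity of Frobenius makes explicit why checking $p$-th powers of generators suffices, a step the paper leaves implicit.
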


\begin{proof}
    Note that $\ker(\mu_{F_{\varphi}}) \subset S \otimes _{S^{(p)}} S$ is generated by elements $s \otimes 1 - 1 \otimes s$ with $s \in S$. It suffices to show that the $p^{th}$ power of any such element is zero. But $s^p \otimes 1 = 1 \otimes s^p$ in $S \otimes _{S^{(p)}} S$ as $s^p = F_{\varphi}(s \otimes 1)$. 
\end{proof}

\begin{theorem}
\label{thm:b-nil-formally-unramified}
   % Suppose $\varphi \colon R \to S$ is a map of $\mathbf{F}_p$-algebras. Let $F_\varphi \colon S^{(p)} \to S$ be the relative Frobenius. 
   Let $\varphi \colon R \to S$ be a map of $\mathbf{F}_p$-algebras. Then the following are equivalent: 
    \begin{enumerate}
        \item $\varphi$ is b-nil formally unramified.\label{thm:b-nil-formally-unramified.a}
        \item $F_\varphi$ is b-nil formally unramified.\label{thm:b-nil-formally-unramified.b}
        \item $F_{\varphi}$ is an epimorphism of rings.\label{thm:b-nil-formally-unramified.c}
        \item %$S \otimes_{S^{(p)}} \coker(F_\varphi) = 0$. 
        $\mathfrak{B}_{\varphi} = 0$. \label{thm:b-nil-formally-unramified.d}
    \end{enumerate}
\end{theorem}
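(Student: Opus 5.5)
The plan is to route everything through the diagonal ideals, using \autoref{lem:basic-properties-b-nil-formally-unramified}~\autoref{lem:basic-properties-b-nil-formally-unramified.c} for both $\varphi$ and $F_\varphi$, together with \autoref{lem:diagonal-relative-Frobenius}. Write $J = \ker(\mu_\varphi) \subset S \otimes_R S$ and $K = \ker(\mu_{F_\varphi}) \subset S \otimes_{S^{(p)}} S$.

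First I will justify the identification $\mathfrak{B}_\varphi \cong K \cong J/J^{[p]}$ asserted before the statement. The isomorphism $S \otimes_{S^{(p)}} \coker(F_\varphi) \cong K$ is the general fact $\ker(\mu_\psi) \cong S\otimes \coker(\psi)$ recalled at the start of \autoref{sec:epimorphisms}, applied to $\psi = F_\varphi$. For $K \cong J/J^{[p]}$, I will use that $S \otimes_{S^{(p)}} S$ is the quotient of $S \otimes_R S$ by the ideal generated by the elements $\varphi(r)s^p \otimes 1 - 1 \otimes \varphi(r)s^p$. Since $\varphi(r) \otimes 1 = 1 \otimes \varphi(r)$ in $S\otimes_R S$, and $\varphi(r)s^p \otimes 1 - 1\otimes\varphi(r)s^p = (\varphi(r)\otimes 1)(s\otimes 1 - 1\otimes s)^p$, this ideal is exactly $J^{[p]}$ (using that $J$ is generated by the $s\otimes 1 - 1\otimes s$ and that the $p$-th power map is additive, so $J^{[p]}$ is generated by $(s\otimes1-1\otimes s)^p$). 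Hence $S\otimes_{S^{(p)}}S = (S\otimes_R S)/J^{[p]}$ compatibly with multiplication maps, giving $K = J/J^{[p]}$.

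With this in hand, (a)$\Leftrightarrow$(d) follows because by \autoref{lem:basic-properties-b-nil-formally-unramified}~\autoref{lem:basic-properties-b-nil-formally-unramified.c}, (a) means $J = J^{[p]}$, i.e.\ $\mathfrak{B}_\varphi = J/J^{[p]} = 0$. For (c)$\Leftrightarrow$(d), \autoref{lem:epimorphism-alternative-characterizations}~\autoref{lem:epimorphism-alternative-characterizations.d} says $F_\varphi$ is epic iff $S\otimes_{S^{(p)}}\coker(F_\varphi)=\mathfrak{B}_\varphi=0$. For (b)$\Leftrightarrow$(d), by the same lemma applied to $F_\varphi$, (b) means $K = K^{[p]}$; but $K^{[p]}=0$ by \autoref{lem:diagonal-relative-Frobenius}, so (b) means $K=0$, i.e.\ $\mathfrak{B}_\varphi=0$. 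Alternatively, (c)$\Rightarrow$(b) is \autoref{example:episareunramified}.

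The only step requiring care is the identification of $S\otimes_{S^{(p)}}S$ as $(S\otimes_R S)/J^{[p]}$, specifically checking that the image of $S^{(p)}$ in $S$ is generated as a ring by $\varphi(R)$ and $S^p$, so that the relations imposed are exactly those listed; I expect this to be routine once written via the presentation $S\otimes_{S^{(p)}}S = (S\otimes_{\mathbf{Z}} S)/(\text{relations } F_\varphi(x)\otimes 1 = 1\otimes F_\varphi(x))$.
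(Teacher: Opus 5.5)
Your proof is correct, but it connects (a) to the other conditions differently from the paper.

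\textbf{The paper's route.}
\begin{itemize}
\item (a)$\Rightarrow$(b): $\varphi$ factors through $F_\varphi$, so b-nil formal unramifiedness passes to $F_\varphi$ by part (a) of the basic-properties lemma.
\item (b)$\Rightarrow$(c): the same argument as your (b)$\Leftrightarrow$(d). The diagonal ideal $K$ of $F_\varphi$ satisfies $K = K^{[p]}$, and also $K^{[p]} = 0$.
\item (c)$\Rightarrow$(a): cited from Datta--Olander. The underlying argument is that two lifts $f,g\colon S\to A$ agreeing modulo $I$, with $I^{[p]}=0$, agree on $\varphi(R)$ and on $S^p$. Hence $f\circ F_\varphi = g\circ F_\varphi$, and so $f=g$.
\end{itemize}
In this proof the paper never uses the isomorphism $\mathfrak{B}_\varphi \cong \ker(\mu_\varphi)/\ker(\mu_\varphi)^{[p]}$; it only asserts it before the theorem.

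\textbf{Your route.} You prove that isomorphism by identifying $S\otimes_{S^{(p)}}S$ with $(S\otimes_R S)/J^{[p]}$. Your verification is sound:
\begin{itemize}
\item the relations are $t\otimes 1 - 1\otimes t$ for $t\in\operatorname{im}(F_\varphi)$, and $\operatorname{im}(F_\varphi)$ is additively spanned by the elements $\varphi(r)s^p$;
\item $\varphi(r)s^p\otimes 1 - 1\otimes \varphi(r)s^p = (\varphi(r)\otimes 1)(s\otimes 1 - 1\otimes s)^p$;
\item $J^{[p]}$ is generated by the $(s\otimes 1-1\otimes s)^p$, by additivity of the $p$-th power.
\end{itemize}
You then read off (a)$\Leftrightarrow$(d) directly from the criterion $J=J^{[p]}$ of the basic-properties lemma.

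\textbf{Comparison.} Your approach costs a small tensor-product computation. In exchange, it replaces both the factorization argument and the cited implication (c)$\Rightarrow$(a). Every equivalence then runs through the single diagonal-ideal criterion. It also actually justifies the paper's description of $\mathfrak{B}_\varphi$ as a Frobenius analogue of $\Omega_\varphi = J/J^2$. The paper's route is shorter and purely formal, but leaves that identification unproved.
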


\begin{proof}
    \autoref{thm:b-nil-formally-unramified.a} $\implies$ \autoref{thm:b-nil-formally-unramified.b}: By \autoref{lem:basic-properties-b-nil-formally-unramified}~\autoref{lem:basic-properties-b-nil-formally-unramified.a}, since the ring map $\varphi$ factors through $F_{\varphi}$, it follows that $F_{\varphi}$ is b-nil formally unramified. 
    
    \autoref{thm:b-nil-formally-unramified.b} $\implies$ \autoref{thm:b-nil-formally-unramified.c}: By \autoref{lem:epimorphism-alternative-characterizations}, it suffices to show that $\mu_{F_\varphi}$ is an isomorphism.  We have  $\mu_{F_{\varphi}}$ is b-nil formally \'etale by \autoref{lem:basic-properties-b-nil-formally-unramified}~\autoref{lem:basic-properties-b-nil-formally-unramified.c}. Then \[
\ker(\mu_{F_{\varphi}}) = \ker(\mu_{F_{\varphi}})^{[p]} = 0
    \]by \autoref{lem:basic-properties-b-nil-formally-unramified}~\autoref{lem:basic-properties-b-nil-formally-unramified.c} and \autoref{lem:diagonal-relative-Frobenius}, i.e., $\mu_{F_{\varphi}}$ is an isomorphism.

   \autoref{thm:b-nil-formally-unramified.c} $\implies$ \autoref{thm:b-nil-formally-unramified.a} is well-known. See for instance, \cite[Lemma~3.14~(b)]{DattaOlanderRelFrobIso}. 

   Finally, \autoref{thm:b-nil-formally-unramified.c}$\Longleftrightarrow$\autoref{thm:b-nil-formally-unramified.d} by \autoref{lem:epimorphism-alternative-characterizations}.
\end{proof}

%\begin{corollary} --- this is immediate from the definition, I added it as an example following the definition but we can remove it if we like. 
%    \label{cor:surjections-bnil-formally-unramified}
  %  A surjection of $\mathbf{F}_p$-algebras is b-nil formally unramified.
%\end{corollary}
%
%\begin{proof}
%    Indeed, the relative Frobenius of a surjective map of $\mathbf{F}_p$-algebras is surjective (since the relative Frobenius factors the surjection), and hence it is an epimorphism.
%\end{proof}

\begin{corollary}
    \label{corformally-unramified-fields}
    An extension of fields of characteristic $p > 0$ is formally unramified if and only if it is b-nil formally unramified.
\end{corollary}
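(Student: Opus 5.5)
For a field extension $k \subset K$ of characteristic $p$, the relative Frobenius is $F_{K/k} \colon K^{(p)} = F_*k \otimes_k K \to K$, $a \otimes x \mapsto a x^p$, whose image is the subfield $k K^p$. The plan is to reduce both conditions in the statement to the single condition $K = k K^p$, that is, to surjectivity of $F_{K/k}$.

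First, by \autoref{thm:b-nil-formally-unramified}, $k \to K$ is b-nil formally unramified exactly when $F_{K/k}$ is an epimorphism. The image $kK^p$ is a field, and by \autoref{lem:epimorphism-extension-reduction}, $F_{K/k}$ is an epimorphism if and only if the inclusion of fields $kK^p \hookrightarrow K$ is an epimorphism. Since $K \neq 0$, \autoref{cor:epimorphisms-from-fields} says this holds if and only if $kK^p = K$. So b-nil formal unramifiedness is equivalent to $K = kK^p$.

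Second, I would show that formal unramifiedness is also equivalent to $K = kK^p$. For a field extension, formal unramifiedness in the discrete sense is equivalent to $\Omega_{K/k} = 0$; this is standard, since $\Omega = I/I^2$ with $I = \ker(\mu)$, and $I = I^2$ is the unramified condition. Then I invoke the classical fact that $\Omega_{K/k} = 0$ if and only if $K = kK^p$: a $p$-basis of $K$ over $kK^p$ gives $\dim_K \Omega_{K/k}$ equal to its cardinality (Matsumura, Theorem 26.5 or thereabouts). If I wanted to avoid citing this, I would argue directly:
\begin{itemize}
\item If $K = kK^p$, every derivation $D$ over $k$ kills $K^p$ and $k$, hence kills $K$.
\item Conversely, if $x \notin kK^p$, then $K \supset kK^p(x)$ has degree $p$ over $kK^p$. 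The derivation $d/dx$ on $kK^p(x)$ extends to $K$ by extending along a $p$-basis, which gives $\Omega_{K/k} \neq 0$.
\end{itemize}

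Combining the two steps proves the corollary. The only nontrivial input is this classical $p$-basis fact. It can also be seen as a special case of the converse direction, because b-nil formally unramified trivially implies formally unramified: the square-zero ideals appearing in the definition of formal unramifiedness satisfy $I^{[p]} = 0$ when $p \ge 2$. So I only need: formally unramified implies $K = kK^p$, which is the $\Omega_{K/k} = 0 \Rightarrow K = kK^p$ direction above. The main obstacle is therefore just justifying the existence of the nonzero derivation when $K \neq kK^p$, and I would cite the $p$-basis theory for it.
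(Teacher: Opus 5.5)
Your proposal is correct and takes essentially the paper's route: formal unramifiedness gives $\Omega_{K/k}=0$, so by $p$-basis theory (the paper cites the Stacks Project fact that a $p$-basis yields a basis of $\Omega$) the empty set is a $p$-basis and $K = kK^p$. Hence $F_{K/k}$ is surjective, so an epimorphism, so b-nil formally unramified by \autoref{thm:b-nil-formally-unramified}, and the converse is the trivial implication you note. Your additional argument that b-nil formal unramifiedness forces $kK^p = K$ via \autoref{lem:epimorphism-extension-reduction} and \autoref{cor:epimorphisms-from-fields} is correct but, as you observe yourself, not needed.
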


\begin{proof}
    Let $L/K$ be a formally unramified extension of fields of characteristic $p$. Since a $p$-basis of $L/K$ yields a basis of $\Omega_{L/K}$  (\cite[\href{https://stacks.math.columbia.edu/tag/07P2}{Tag 07P2}]{stacks-project}), we see that $\emptyset$ is a $p$-basis of $L/K$. Then $K[L^p] = L$, that is, $F_{L/K}$ is surjective. Thus, $L/K$ is b-nil formally unramified by \autoref{thm:b-nil-formally-unramified}.
\end{proof}

\begin{remark}
    One can use \autoref{thm:b-nil-formally-unramified} to give a slightly different proof of \cite[Theorem~3.13]{DattaOlanderRelFrobIso}, where it is shown that if $R \to S$ is a b-nil formally \'etale map of $\mathbf{F}_p$-algebras, then $F_{S/R}$ is an isomorphism. Namely, we have $F_\varphi$ is an epimorphism by b-nil formal unramifiedness, and $F_\varphi$ is flat (in fact, $S$ is a projective $S^{(p)}$-module) by b-nil formal smoothness \cite[Proposition~3.9]{DattaOlanderRelFrobIso}. Then $F_\varphi$ is faithfully flat as $\Spec(F_\varphi)$ is a homeomorphism. But a faithfully flat epimorphism is an isomorphism by \autoref{cor:universally-injective-epimorphism}.
\end{remark}

\begin{corollary}
    \label{cor:b-nil-formally-unramified-over-epi-Frobenius}
    Let $k$ be an $\mathbf{F}_p$-algebra such that its (absolute) Frobenius $F_k$ is an epimorphism (for example, if $k$ is semi-perfect). Let $R$ be a $k$-algebra. The following are equivalent:
    \begin{enumerate}
        \item $k \to R$ is b-nil formally unramified.\label{cor:b-nil-formally-unramified-over-epi-Frobenius.a}
        \item $F_R$ is an epimorphism.\label{cor:b-nil-formally-unramified-over-epi-Frobenius.b}
        \item $F_R$ is b-nil formally unramified.\label{cor:b-nil-formally-unramified-over-epi-Frobenius.c}
        \item $\mathbf{F}_p \to R$ is b-nil formally unramified.\label{cor:b-nil-formally-unramified-over-epi-Frobenius.d}
    \end{enumerate}
\end{corollary}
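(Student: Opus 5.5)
The plan is to reduce everything to \autoref{thm:b-nil-formally-unramified} together with formal properties of epimorphisms. Applying the theorem to $\mathbf{F}_p \to R$, whose relative Frobenius is the absolute Frobenius $F_R$ (because $F_{\mathbf{F}_p}$ is the identity, so $R^{(p)} = R$), we get \autoref{cor:b-nil-formally-unramified-over-epi-Frobenius.b} $\Leftrightarrow$ \autoref{cor:b-nil-formally-unramified-over-epi-Frobenius.c} $\Leftrightarrow$ \autoref{cor:b-nil-formally-unramified-over-epi-Frobenius.d} at once. It therefore remains to connect \autoref{cor:b-nil-formally-unramified-over-epi-Frobenius.a} with \autoref{cor:b-nil-formally-unramified-over-epi-Frobenius.b}. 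By \autoref{thm:b-nil-formally-unramified}, condition \autoref{cor:b-nil-formally-unramified-over-epi-Frobenius.a} is equivalent to $F_{R/k}\colon F_*k\otimes_k R \to F_*R$ being an epimorphism.

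The key step is the factorization of the absolute Frobenius of $R$ as
\[
R \xrightarrow{\;F_k\otimes_k \id_R\;} F_*k\otimes_k R \xrightarrow{\;F_{R/k}\;} F_*R,
\]
which is the commutative diagram defining the relative Frobenius. The first map is the base change of $F_k$ along $k \to R$. Since $F_k$ is an epimorphism, this base change is an epimorphism by \autoref{cor:epimorphisms-formally-unramified}.

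With the factorization in hand, both directions follow from cancellation. If $F_{R/k}$ is an epimorphism, then $F_R$ is a composition of two epimorphisms and hence an epimorphism, giving \autoref{cor:b-nil-formally-unramified-over-epi-Frobenius.a} $\Rightarrow$ \autoref{cor:b-nil-formally-unramified-over-epi-Frobenius.b}. Conversely, if the composite $F_R$ is an epimorphism, then the second factor $F_{R/k}$ is an epimorphism, by the cancellation property recorded in the proof of \autoref{lem:epimorphism-extension-reduction}. This gives \autoref{cor:b-nil-formally-unramified-over-epi-Frobenius.b} $\Rightarrow$ \autoref{cor:b-nil-formally-unramified-over-epi-Frobenius.a}.

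One could also obtain \autoref{cor:b-nil-formally-unramified-over-epi-Frobenius.d} $\Rightarrow$ \autoref{cor:b-nil-formally-unramified-over-epi-Frobenius.a} directly from \autoref{lem:basic-properties-b-nil-formally-unramified}~\autoref{lem:basic-properties-b-nil-formally-unramified.a}, since $\mathbf{F}_p \to R$ factors through $k$. The only point that needs care is the bookkeeping of the $F_*$ module structures, to confirm that the first arrow in the factorization really is the base change of $F_k$ along $k\to R$ in the sense of \autoref{cor:epimorphisms-formally-unramified}. For the parenthetical remark: a semi-perfect ring has surjective Frobenius, and surjections are epimorphisms.
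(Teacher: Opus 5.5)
Your proof is correct and matches the paper's. Both arguments rest on the same three ingredients: the factorization $F_R = F_{R/k}\circ(F_k\otimes_k \id_R)$, the stability of epimorphisms under base change from \autoref{cor:epimorphisms-formally-unramified}, and \autoref{thm:b-nil-formally-unramified} applied to $k\to R$ and to $\mathbf{F}_p\to R$. The only difference is organizational. The paper runs the cycle (a)$\Rightarrow$(b)$\Rightarrow$(c)$\Rightarrow$(d)$\Rightarrow$(a), closing it with \autoref{lem:basic-properties-b-nil-formally-unramified}~(a) exactly as in your last paragraph. Your cancellation argument for (b)$\Rightarrow$(a) is an equally valid alternative, and it makes visible that this direction needs no hypothesis on $F_k$.
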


\begin{proof}
    Consider the diagram that characterizes the relative Frobenius $F_{R/k}$:
    \[\begin{tikzcd}
	k & R & \\
	{k} & {R^{(p)}} \\
	&& {R.}
	\arrow[from=1-1, to=1-2]
	\arrow["{F_k}"', from=1-1, to=2-1]
	\arrow[from=1-2, to=2-2]
	\arrow["{F_R}", curve={height=-18pt}, from=1-2, to=3-3]
	\arrow[from=2-1, to=2-2]
	\arrow[curve={height=18pt}, from=2-1, to=3-3]
	\arrow["{F_{R/k}}", from=2-2, to=3-3]
\end{tikzcd}\]
Since epimorphisms are preserved under base change (\autoref{cor:epimorphisms-formally-unramified}), $R \to R^{(p)}$ is an epimorphism.

\autoref{cor:b-nil-formally-unramified-over-epi-Frobenius.a}$\implies$\autoref{cor:b-nil-formally-unramified-over-epi-Frobenius.b}: If $k \to R$ is b-nil formally unramified, then $F_{R/k}$ is an epimorphism by \autoref{thm:b-nil-formally-unramified}. Hence, $F_R$ is an epimorphism by the above diagram since it is a composition of two epimorphisms.

\autoref{cor:b-nil-formally-unramified-over-epi-Frobenius.b}$\implies$\autoref{cor:b-nil-formally-unramified-over-epi-Frobenius.c}: Epimorphisms are b-nil formally unramified by \autoref{example:episareunramified}.
%$F_R$ can be identified with $F_{R/\mathbf{F}_p}$, the relative Frobenius of $\mathbf{F}_p \to R$. Thus, by hypothesis and \autoref{thm:b-nil-formally-unramified}, $F_{R/\mathbf{F}_p}$ is an epimorphism and hence b-nil formally unramified. Then $F_R$ is also b-nil formally unramified.

\autoref{cor:b-nil-formally-unramified-over-epi-Frobenius.c}$\implies$\autoref{cor:b-nil-formally-unramified-over-epi-Frobenius.d}: %As in the previous implication, t
This follows upon identifying $F_R$ with $F_{R/\mathbf{F}_p}$ and applying \autoref{thm:b-nil-formally-unramified}.

\autoref{cor:b-nil-formally-unramified-over-epi-Frobenius.d}$\implies$\autoref{cor:b-nil-formally-unramified-over-epi-Frobenius.a}: $\mathbf{F}_p \to R$ factors as $\mathbf{F}_p \to k \to R$. Thus, $\mathbf{F}_p \to k \to R$ is b-nil formally unramified, and so, $k \to R$ is b-nil formally unramified by \autoref{lem:basic-properties-b-nil-formally-unramified}~\autoref{lem:basic-properties-b-nil-formally-unramified.a}.
\end{proof}

The b-nil formally unramified property satisfies universally injective descent (the universally injective descent of the formally unramified property is well-known \cite[\href{https://stacks.math.columbia.edu/tag/08XE}{Tag~08XE}]{stacks-project}).

\begin{corollary}
    \label{cor:bnil-formally-unramified-descent}
    Let $R \to S$ be a b-nil formally unramified map of $\mathbf{F}_p$-algebras. Let $R \to T$ be a universally injective ring map. If the induced map $T \to T \otimes_R S$ is b-nil formally unramified, then $R \to S$ is b-nil formally unramified.
\end{corollary}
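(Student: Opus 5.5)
The statement as written has a redundancy: it assumes $R \to S$ is b-nil formally unramified and concludes the same thing. The intended hypothesis is surely only that $R \to T$ is universally injective and $T \to T \otimes_R S$ is b-nil formally unramified. I will prove that version, which also gives the literal statement.

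The plan is to translate everything into epimorphisms via \autoref{thm:b-nil-formally-unramified} and then apply the descent result \autoref{lem:epimorphism-descent}. Set $S_T \coloneqq T \otimes_R S$. By \autoref{thm:b-nil-formally-unramified}, the hypothesis says the relative Frobenius $F_{S_T/T} \colon F_*T \otimes_T S_T \to F_*S_T$ is an epimorphism. The goal is to show that $F_{S/R} \colon F_*R \otimes_R S \to F_*S$ is an epimorphism, since \autoref{thm:b-nil-formally-unramified} then gives that $R \to S$ is b-nil formally unramified.

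The key step is to compare $F_{S_T/T}$ with a base change of $F_{S/R}$. I would view $F_{S/R}$ as a map of $R$-algebras through the structure $R \to F_*R$, so that $R$ acts on both source and target through $F_R$. The base change along $F_R$ is the right one because $S^{(p)}$ is itself a base change along $F_R$; this identification is routine.

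The natural candidate for the descending ring is then $F_*T$, regarded as an $R$-algebra via $F_*(R \to T)$. This map is universally injective, because it is $R \to T$ transported along the Frobenius twist: an $R$-module $M$ over $F_*R$ corresponds to $F^*$-twisted modules. Now compute $F_*T \otimes_{F_*R}(F_*R \otimes_R S) = F_*T \otimes_R S \cong F_*T \otimes_T S_T$. On the target side, $F_*T \otimes_{F_*R} F_*S = F_*(T \otimes_R S) = F_*S_T$. One checks on generators that under these identifications $\id_{F_*T} \otimes F_{S/R}$ becomes $F_{S_T/T}$: both send $t \otimes s$ to $t \otimes s^p$.

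With this in hand, \autoref{lem:epimorphism-descent}, applied over the base $F_*R$ with $S_1 = S^{(p)}$, $S_2 = F_*S$ and the universally injective map $F_*R \to F_*T$, gives that $F_{S/R}$ is an epimorphism. The main point to verify carefully is the compatibility of the tensor identifications with the Frobenius twists, in particular that the base ring in \autoref{lem:epimorphism-descent} is $F_*R$ and that universal injectivity transfers to $F_*R \to F_*T$. That transfer holds because $F_*$ is merely a relabeling of the ring: $F_*R \to F_*T$ is the same ring map as $R \to T$.
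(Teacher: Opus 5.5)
Your proposal is correct and follows the paper's argument: the paper also identifies $F_{T\otimes_R S/T}$ with the base change of $F_{S/R}$ along $R \to T$ (citing SGA~5 for base-change compatibility of relative Frobenius), then applies \autoref{lem:epimorphism-descent} and \autoref{thm:b-nil-formally-unramified}. Your explicit computation over the base $F_*R$ (acting on $S^{(p)}$ through the first factor and on $F_*S$ through $F_*\varphi$) is exactly what makes the base change correct, though your earlier phrase that $R$ acts on both sides ``through $F_R$'' would, read literally, give the wrong structure; you are also right that the hypothesis that $R \to S$ is b-nil formally unramified is a redundant slip in the statement.
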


\begin{proof}
    The base change of the $R$-algebra map $F_{S/R}$ along $R \to T$ yields $F_{T \otimes_R S/T}$ since the formation of relative Frobenius commutes with base change \cite[Expos\'e~XV,~$n^02$,~Proposition~1~b)]{SGA5}. Thus, if $F_{T \otimes_R S/S}$ is an epimorphism, then $F_{S/R}$ is an epimorphism by \autoref{lem:epimorphism-descent}. The result now follows by \autoref{thm:b-nil-formally-unramified}.
\end{proof}

\begin{definition}
    \label{def:F-pure-homomorphisms}
    A map of $\mathbf{F}_p$-algebras $\varphi \colon R \to S$ is \emph{$F$-pure} if $F_\varphi$ is universally injective (aka a pure ring map).
\end{definition}

The above notion, first explored by \cite{HashimotoFpure}, is the relative version of the notion of \emph{$F$-purity} of an $\mathbf{F}_p$-algebra $R$. Recall, the latter is the condition that the absolute Frobenius $F_R$ is universally injective. Thus, $R$ is $F$-pure (in the absolute sense) if and only if $\mathbf{F}_p \to R$ is an $F$-pure homomorphism (in the relative sense). 

For $F$-pure homomorphisms, \autoref{thm:b-nil-formally-unramified} implies:

\begin{corollary}
    \label{cor:bnil-unramified-etale-equal-F-pure}
    Let $\varphi \colon R \to S$ be an $F$-pure map of $\mathbf{F}_p$-algebras. Then $\varphi$ is b-nil formally unramified if and only if $\varphi$ is b-nil formally \'etale.
\end{corollary}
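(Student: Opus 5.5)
The reverse implication is immediate from the definitions: b-nil formally étale maps are in particular b-nil formally unramified. So the content is the forward direction, and the plan is to show that if $\varphi$ is $F$-pure and b-nil formally unramified, then $F_\varphi$ is an isomorphism.

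First, by \autoref{thm:b-nil-formally-unramified}, b-nil formal unramifiedness of $\varphi$ means $F_\varphi \colon S^{(p)} \to S$ is an epimorphism of rings. By \autoref{def:F-pure-homomorphisms}, $F$-purity means $F_\varphi$ is universally injective. \autoref{cor:universally-injective-epimorphism} says a universally injective epimorphism is an isomorphism, so $F_\varphi$ is an isomorphism.

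It remains to deduce that $\varphi$ is b-nil formally étale. For this I would invoke \cite[Theorem~3.13]{DattaOlanderRelFrobIso}, which states that a map of $\mathbf{F}_p$-algebras is b-nil formally étale if and only if its relative Frobenius is an isomorphism. This equivalence is cited in the introduction and used in the proof of \autoref{lem:basic-properties-b-nil-formally-unramified}~\autoref{lem:basic-properties-b-nil-formally-unramified.b}.

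There is no real obstacle here. The only point needing care is that we need the converse direction of the Datta–Olander characterization (isomorphic relative Frobenius implies b-nil formally étale), not just the direction recalled in the remark after \autoref{corformally-unramified-fields}. That converse is exactly the "if" part of \cite[Theorem~3.13]{DattaOlanderRelFrobIso}, so the argument is complete.
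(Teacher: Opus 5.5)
Your proof is correct and is the paper's argument: \autoref{thm:b-nil-formally-unramified} makes $F_\varphi$ an epimorphism, $F$-purity makes it universally injective, \autoref{cor:universally-injective-epimorphism} makes it an isomorphism, and Datta--Olander turns an isomorphic relative Frobenius into b-nil formal \'etaleness. One small citation point: the paper attributes the direction you need (isomorphic $F_\varphi$ implies b-nil formally \'etale) to \cite[Theorem~3.21]{DattaOlanderRelFrobIso}, and reserves Theorem~3.13 for the opposite direction recalled in the remark after \autoref{corformally-unramified-fields}.
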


\begin{proof}
    The non-trivial implication is $\implies$. If $\varphi$ is b-nil formally unramified, then $F_\varphi$ is a universally injective epimorphism by \autoref{thm:b-nil-formally-unramified} and our hypothesis. A universally injective epimorphism is an isomorphism by \autoref{cor:universally-injective-epimorphism}. Thus, $F_\varphi$ is an isomorphism, and so, $\varphi$ is b-nil formally \'etale by \cite[Theorem~3.21]{DattaOlanderRelFrobIso}.
\end{proof}

\begin{remark}
    The full strength of universal injectivity of $F_\varphi$ is not needed in \autoref{cor:bnil-unramified-etale-equal-F-pure}. One can instead assume $F_\varphi$ is injective and satisfies the property that for every quotient module $M$ of the $S^{(p)}$-module $S$, if $S \otimes_{S^{(p)}} M = 0$, then $M = 0$. Here $S^{(p)}$ is the domain of $F_\varphi$. See \autoref{rem:weaker-than-O}.
\end{remark}

Our next goal is to deduce an analog for b-nil formal unramifiedness (see \autoref{thm:F-finite-b-nil-formally-unramified}) of \cite[Theorem~3.20]{DattaOlanderRelFrobIso} for b-nil formal smoothness and \cite[Theorem~4.1]{DattaOlanderRelFrobIso} for b-nil formal \'etaleness.
For this, recall the following relative version of $F$-finiteness due to Hashimoto \cite{HashimotoFfinite}.

\begin{definition}
    \label{def:F-finite-ring-map}
    A map of $\mathbf{F}_p$-algebras $R \to S$ is \emph{$F$-finite} if $F_{S/R}$ is a finite (equivalently, finite type) map.
\end{definition}

\begin{example}
    If $S$ is $F$-finite, then any map of $\mathbf{F}_p$-algebras $R \to S$ is $F$-finite because the absolute Frobenius of $S$ factors via the relative Frobenius $F_{S/R}$.
\end{example}

%\begin{lemma}
%    \label{lem:purely-inseparable-unramified}
%    Let $R \hookrightarrow S$ be a formally unramified extension of rings of prime characteristic $p > 0$ such that $S^p \subset R$. For all $\frq \in \Spec(S)$, if $\frp = \frq \cap R$, then $\kappa(\frp) \hookrightarrow \kappa(\frq)$ is an isomorphism.
%\end{lemma}
%
%\begin{proof}
%    Since $R \hookrightarrow S$ is formally unramified, so is $\kappa(\frp) \hookrightarrow \kappa(\frq)$, and hence, $\emptyset$ is a $p$-basis of $\kappa(\frq)/\kappa(\frp)$. By the assumption that $S^p \subset R$, we have $\kappa(\frq)^p \subset \kappa(\frp)$. Hence, $\kappa(\frp) = \kappa(\frp)[\kappa(\frq)^p] = \kappa(\frq)$, where the last equality follows from the empty $p$-basis observation.
%\end{proof}

\begin{theorem}
\label{thm:F-finite-b-nil-formally-unramified}
    Let $\varphi \colon R \to S$ be an $F$-finite map of $\mathbf{F}_p$-algebras (for example, if $S$ is $F$-finite). Then the following are equivalent:
    \begin{enumerate}
        \item \label{thm:F-finite-b-nil-formally-unramified.a} $\varphi$ is b-nil formally unramified.
        \item \label{thm:F-finite-b-nil-formally-unramified.b} $F_\varphi$ is surjective.
        %\item \label{thm:F-finite-b-nil-formally-unramified.c} $F_\varphi$ is an epimorphism.
        \item \label{thm:F-finite-b-nil-formally-unramified.c} $\varphi$ is formally unramified.
    \end{enumerate}
\end{theorem}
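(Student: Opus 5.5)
We prove the cycle (a) $\Rightarrow$ (b) $\Rightarrow$ (c) $\Rightarrow$ (a).

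\emph{(a) $\Rightarrow$ (b).} By \autoref{thm:b-nil-formally-unramified}, b-nil formal unramifiedness of $\varphi$ means that $F_\varphi$ is an epimorphism. Since $\varphi$ is $F$-finite, $F_\varphi$ is a finite ring map. By \autoref{prop:epic-implies-surjective}~\autoref{prop:epic-implies-surjective.b}, a finite epimorphism is surjective. The hypothesis $S\neq 0$ there is harmless, since the case $S=0$ is trivial.

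\emph{(b) $\Rightarrow$ (c).} This direction holds without any finiteness hypothesis. A surjection is an epimorphism, so (b) gives that $F_\varphi$ is an epimorphism, and \autoref{thm:b-nil-formally-unramified} then makes $\varphi$ b-nil formally unramified. It remains to see that b-nil formally unramified implies formally unramified. In the defining diagram, an ideal $I$ with $I^2=0$ satisfies $I^{[p]}\subset I^2=0$ because $p\geq 2$. So square-zero thickenings are among the test objects, and uniqueness of lifts for all ideals with $I^{[p]}=0$ gives uniqueness for square-zero ones. Alternatively, $\ker(\mu_\varphi)=\ker(\mu_\varphi)^{[p]}\subset \ker(\mu_\varphi)^2$ by \autoref{lem:basic-properties-b-nil-formally-unramified}~\autoref{lem:basic-properties-b-nil-formally-unramified.c}.

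\emph{(c) $\Rightarrow$ (a) or (b).} This is the main step. Assume $\Omega_{S/R}=0$ and set $J=\ker(\mu_{F_\varphi})\subset S\otimes_{S^{(p)}}S$. By \autoref{lem:diagonal-relative-Frobenius}, $J^{[p]}=0$, and by \autoref{thm:b-nil-formally-unramified} it suffices to show $J=0$.

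First, $\Omega_{S/S^{(p)}}=\Omega_{S/R}$. Indeed, $d(r\otimes s)$ maps to $d(\varphi(r)s^p)=s^p\,d\varphi(r)=0$ in $\Omega_{S/R}$, so the image of $S^{(p)}$ has zero differential, and the conditions defining the two modules of differentials coincide. Hence $F_\varphi$ is formally unramified, which gives $J=J^2$.

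Next, $F_\varphi$ is finite, so $S$ is generated over $S^{(p)}$ by finitely many elements $s_1,\dots,s_n$. The ideal $J$ is then generated by the finitely many elements $s_i\otimes 1-1\otimes s_i$. Each of these has $p$-th power zero, so the $n$ generators are nilpotent and $J$ is a finitely generated nil ideal. A finitely generated ideal whose generators are nilpotent is itself nilpotent: $J^{n(p-1)+1}=0$.

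Combining, $J=J^2=\cdots=J^{n(p-1)+1}=0$. Therefore $F_\varphi$ is an epimorphism, and so $\varphi$ is b-nil formally unramified.

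The only delicate point is the identification $\Omega_{S/S^{(p)}}\cong\Omega_{S/R}$, which lets us transfer formal unramifiedness from $\varphi$ to $F_\varphi$. The argument above shows that it follows directly from the Leibniz rule, since every element in the image of $S^{(p)}$ is killed by $d$.
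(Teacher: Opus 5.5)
Your proof is correct. For the main implication (c) $\Rightarrow$ (a) you take a somewhat different route from the paper.

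\textbf{The paper's route.} It passes to the inclusion $R[S^p] \hookrightarrow S$ and invokes Samuel's characterization of epimorphisms (\autoref{prop:necessary-sufficient-characterization-epimorphism}). It checks that this inclusion is formally unramified, radicial (a homeomorphism on spectra with purely inseparable residue field extensions), and has finitely generated diagonal ideal. From this it concludes that $F_\varphi$ is an epimorphism, and then finite plus epimorphism gives surjectivity.

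\textbf{Your route.} You work directly with $J=\ker(\mu_{F_\varphi})$. By \autoref{lem:diagonal-relative-Frobenius}, the finitely many generators $s_i\otimes 1-1\otimes s_i$ have $p$-th power zero. Finite generation then gives $J^{n(p-1)+1}=0$, and $J=J^2$ forces $J=0$.

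\textbf{Comparison.} In effect you inline the converse half of Samuel's criterion. You replace its radicial/local-nilpotence step, which rests on Stacks facts about surjectivity of $\mu$ on spectra, with the elementary observation $J^{[p]}=0$. You also gain an explicit nilpotence bound. Your argument is more self-contained and also gives \autoref{prop:when-do-formally-b-nil-unramified-coincide} at once: any finitely generated ideal $J$ with $J=J^2$ and $J^{[p]}=0$ is nilpotent, hence zero. The paper's route is more conceptual, placing the result inside the general theory of ring epimorphisms.

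Your cycle is arranged differently from the paper's. You use finiteness twice: in (a) $\Rightarrow$ (b), where finite epimorphisms are surjective, and in (c) $\Rightarrow$ (a), for finite generation of $J$. The ingredients are the same as the paper's. A minor point: you only need the surjection $\Omega_{S/R}\twoheadrightarrow\Omega_{S/S^{(p)}}$ coming from the factorization of $\varphi$ through $F_\varphi$, although your claimed equality is also correct.
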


\begin{proof}
    We have \autoref{thm:F-finite-b-nil-formally-unramified.b} $\implies$ \autoref{thm:F-finite-b-nil-formally-unramified.a} by \autoref{thm:b-nil-formally-unramified} and  \autoref{thm:F-finite-b-nil-formally-unramified.a} $\implies$ \autoref{thm:F-finite-b-nil-formally-unramified.c} is clear. By our assumption, $F_\varphi$ is finite. A finite epimorphism is a surjection by \autoref{prop:epic-implies-surjective}. Thus, to show \autoref{thm:F-finite-b-nil-formally-unramified.c} $\implies$ \autoref{thm:F-finite-b-nil-formally-unramified.b} it is enough to show $F_\varphi$ is an epimorphism if $\varphi$ is formally unramified. Note that $\varphi$ being formally unramified implies $F_\varphi$ is also formally unramified because $\varphi = F_\varphi \circ (\id_{F_*R} \otimes_R \varphi)$. 
    Let $R[S^p]$ be the image of $F_\varphi$. Then $R[S^p] \hookrightarrow S$ is formally unramified. Note, $\Spec(S) \to \Spec(R[S^p])$ is a homeomorphism and induces purely inseparable extensions of residue fields, %, $R[S^p] \hookrightarrow S$ induces trivial extensions of residue fields by \autoref{lem:purely-inseparable-unramified}, 
    and $\ker(\mu_{S/R[S^p]})$ is a finitely generated ideal of $S \otimes_{R[S^p]} S$ because $R[S^p] \hookrightarrow S$ is finite hence of finite type. Then $R[S^p] \hookrightarrow S$ is an epimorphism by \autoref{prop:necessary-sufficient-characterization-epimorphism}, so $F_\varphi = S^{(p)} \twoheadrightarrow R[S^p] \hookrightarrow S$ is also an epimorphism.
\end{proof}

\begin{remark}
\label{rem:finiteness-hypothesis-on-relative-Frobenius}
    One obtains a robust theory of unramified maps assuming only that the map is of finite type instead of finite presentation. This is the approach of Raynaud \cite{RaynaudHenselian} and \cite[\href{https://stacks.math.columbia.edu/tag/00US}{Tag 00US}]{stacks-project}. On the other hand, the notions smooth and \'etale still need finite presentation hypothesis. This dichotomy is also reflected in \autoref{thm:F-finite-b-nil-formally-unramified}. Namely, we can show the equivalence between b-nil formally unramified and formally unramified for a map $R \to S$ of $\mathbf{F}_p$-algebras assuming $F_{S/R}$ is of finite type. However, to show the equivalence between b-nil formally smooth and formally smooth (resp. b-nil formally \'etale and formally \'etale) we need to assume $F_{S/R}$ is of finite presentation. In fact, \cite[Remark~4.2]{DattaOlanderRelFrobIso} shows that formally smooth/\'etale does not imply b-nil formally smooth/\'etale assuming $F_{S/R}$ is only of finite type. 
\end{remark}

\begin{corollary}
    \label{cor:unramified-implies-bnil}
    An unramified map of $\mathbf{F}_p$-algebras is b-nil formally unramified.
\end{corollary}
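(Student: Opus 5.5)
The plan is to reduce the statement to \autoref{thm:F-finite-b-nil-formally-unramified} by showing that an unramified map is automatically $F$-finite. Recall that an unramified map $\varphi \colon R \to S$ means (in the sense of Raynaud and \cite[\href{https://stacks.math.columbia.edu/tag/00US}{Tag 00US}]{stacks-project}) that $\varphi$ is formally unramified and of finite type. It therefore suffices to prove that a finite type map of $\mathbf{F}_p$-algebras has finite relative Frobenius $F_\varphi \colon S^{(p)} \to S$. Once this is known, $\varphi$ is an $F$-finite, formally unramified map, and the implication \autoref{thm:F-finite-b-nil-formally-unramified.c} $\implies$ \autoref{thm:F-finite-b-nil-formally-unramified.a} of \autoref{thm:F-finite-b-nil-formally-unramified} gives that $\varphi$ is b-nil formally unramified.

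For the finiteness step, choose generators $s_1, \dots, s_n$ of $S$ as an $R$-algebra. The image of $F_\varphi$ is the subring $R[S^p]$ generated by $\varphi(R)$ and the $p$-th powers $s^p$. Every element of $S$ is an $R$-linear combination of monomials $s_1^{a_1}\cdots s_n^{a_n}$. Writing each $a_i = pq_i + r_i$ with $0 \le r_i < p$, each such monomial is a product of an element of $R[S^p]$ with one of the finitely many monomials $s_1^{r_1}\cdots s_n^{r_n}$. Hence $S$ is generated as an $R[S^p]$-module, and thus as an $S^{(p)}$-module via $F_\varphi$, by these $p^n$ monomials. So $F_\varphi$ is finite, and $\varphi$ is $F$-finite in the sense of \autoref{def:F-finite-ring-map}.

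There is no real obstacle here. The only point needing care is the convention that ``unramified'' includes only finite type, not finite presentation. This is harmless, because \autoref{thm:F-finite-b-nil-formally-unramified} requires only that $F_\varphi$ be finite, as emphasized in \autoref{rem:finiteness-hypothesis-on-relative-Frobenius}.
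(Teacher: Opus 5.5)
Your proposal is correct and follows the paper's proof: unramified means finite type plus formally unramified, finite type forces $F_\varphi$ to be finite, and then \autoref{thm:F-finite-b-nil-formally-unramified} applies. The only difference is how finiteness of $F_\varphi$ is obtained: you check it directly with the monomial argument, while the paper notes that $F_\varphi$ is of finite type because $\varphi$ factors through it, and finite type is equivalent to finite for the integral map $F_\varphi$.
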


\begin{proof}
    An unramified map is of finite type and formally unramified. Thus, the relative Frobenius of an unramified map of $\mathbf{F}_p$-algebras is of finite type, that is, such a map is $F$-finite. The result now follows by \autoref{thm:F-finite-b-nil-formally-unramified}.
\end{proof}

\begin{corollary}
    \label{cor:semi-perfect}
    Let $\varphi \colon R \to S$ be an $F$-finite map of $\mathbf{F}_p$-algebras where $R$ is semi-perfect. Then the following are equivalent:
    \begin{enumerate}
        \item $\varphi$ is b-nil formally unramified.\label{cor:semi-perfect.a}
        \item $\varphi$ is formally unramified.\label{cor:semi-perfect.b}
        \item $S$ is semi-perfect.\label{cor:semi-perfect.c}
    \end{enumerate}
\end{corollary}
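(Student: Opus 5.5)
The equivalence of \autoref{cor:semi-perfect.a} and \autoref{cor:semi-perfect.b} is exactly \autoref{thm:F-finite-b-nil-formally-unramified}, since $\varphi$ is assumed $F$-finite. So the plan is to show that \autoref{cor:semi-perfect.c} is equivalent to the statement that $F_\varphi$ is surjective, which is condition \autoref{thm:F-finite-b-nil-formally-unramified.b} of that theorem.

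The key input is the factorization in the defining diagram of the relative Frobenius: $F_S = F_\varphi \circ (F_R \otimes_R \id_S)$, where $F_R\otimes_R\id_S \colon S \to S^{(p)} = F_*R \otimes_R S$. Since $R$ is semi-perfect, $F_R$ is surjective. Surjectivity is preserved by base change (right exactness of $\otimes$), so $S \to S^{(p)}$ is surjective. Concretely, every $r \otimes s$ equals $1\otimes \varphi(r')s$ when $r = r'^p$.

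With this, both directions are immediate. If $F_\varphi$ is surjective, then $F_S$ is a composition of surjections, so $S$ is semi-perfect. Conversely, if $F_S$ is surjective, then $F_\varphi$ is surjective, because $F_S$ factors through it. Equivalently, the image of $F_\varphi$ is $R[S^p] = \varphi(R)S^p$, and this contains $S^p = S$.

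I expect no real obstacle. The only point to check is the identification of the composite $S \to S^{(p)} \to F_*S$ with $F_S$, and that is part of the commutative diagram defining $F_\varphi$. Note that $F$-finiteness is used only through \autoref{thm:F-finite-b-nil-formally-unramified}, to pass from \autoref{cor:semi-perfect.b} to surjectivity of $F_\varphi$. Alternatively, \autoref{cor:semi-perfect.a} $\Leftrightarrow$ \autoref{cor:semi-perfect.c} can be seen via \autoref{cor:b-nil-formally-unramified-over-epi-Frobenius}: (a) holds iff $F_S$ is an epimorphism, and $F_S$ is finite because $F_\varphi$ is finite and $S\to S^{(p)}$ is surjective, so it is surjective by \autoref{prop:epic-implies-surjective}.
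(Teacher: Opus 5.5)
Your proof is correct and follows the paper's: (a)$\Leftrightarrow$(b) comes from \autoref{thm:F-finite-b-nil-formally-unramified}, and (c) is matched with surjectivity of $F_\varphi$ through the factorization $F_S = F_\varphi\circ(F_R\otimes_R \mathrm{id}_S)$, where $S\to S^{(p)}$ is surjective as a base change of the surjective $F_R$. Your alternative route through \autoref{cor:b-nil-formally-unramified-over-epi-Frobenius} (using that $F_S$ is finite) and \autoref{prop:epic-implies-surjective} is also valid.
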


\begin{proof}
    We have \autoref{cor:semi-perfect.a} $\Longleftrightarrow$ \autoref{cor:semi-perfect.b} by \autoref{thm:F-finite-b-nil-formally-unramified}. If $S$ is semi-perfect, then $F_{S/R}$ is surjective; hence, $R \to S$ is b-nil formally unramified by \autoref{thm:F-finite-b-nil-formally-unramified}. Thus, \autoref{cor:semi-perfect.c} $\implies$ \autoref{cor:semi-perfect.a}.  Conversely, if $R \to S$ is b-nil formally unramified, then $F_{\varphi}$ is surjective by \autoref{thm:F-finite-b-nil-formally-unramified}, and $S \to S^{(p)}$ is surjective by base change of the surjective absolute Frobenius of $R$. Thus, the composition 
    \[
    F_S = S \to S^{(p)} \xrightarrow{F_{\varphi}} S
    \] is surjective, i.e., $S$ is semi-perfect.
\end{proof}

An examination of the proof of \autoref{thm:F-finite-b-nil-formally-unramified} shows that we have the following characterization of when the formally unramified and b-nil formally unramified properties coincide.

\begin{proposition}
    \label{prop:when-do-formally-b-nil-unramified-coincide}
    Let $\varphi \colon R \to S$ be a map of $\mathbf{F}_p$-algebras. Then the following are equivalent:
    \begin{enumerate}
        \item \label{prop:when-do-formally-b-nil-unramified-coincide.a} $\varphi$ is b-nil formally unramified.

        \item \label{prop:when-do-formally-b-nil-unramified-coincide.b} $\varphi$ is formally unramified and $\ker(\mu_{S/R[S^p]})$ is a finitely generated ideal of $S \otimes_{R[S^p]} S$.

        \item \label{prop:when-do-formally-b-nil-unramified-coincide.c} $\varphi$ is formally unramified and $\ker(\mu_{S/R[S^p]})$ is generated by an idempotent element. %of $S \otimes_{R[S^p]} S$.
    \end{enumerate}
\end{proposition}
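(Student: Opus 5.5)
Write $R[S^p]$ for the image of $F_\varphi$, so that $F_\varphi$ factors as $S^{(p)} \twoheadrightarrow R[S^p] \hookrightarrow S$. By \autoref{lem:epimorphism-extension-reduction} and \autoref{thm:b-nil-formally-unramified}, $\varphi$ is b-nil formally unramified if and only if the extension $R[S^p] \hookrightarrow S$ is an epimorphism. The plan is therefore to reduce every condition in the statement to a statement about this extension and then run \autoref{prop:necessary-sufficient-characterization-epimorphism} as in the proof of \autoref{thm:F-finite-b-nil-formally-unramified}. Write $J = \ker(\mu_{S/R[S^p]})$.

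For \autoref{prop:when-do-formally-b-nil-unramified-coincide.a} $\implies$ \autoref{prop:when-do-formally-b-nil-unramified-coincide.c}: b-nil formal unramifiedness implies formal unramifiedness, since the lifting condition over square-zero ideals is a special case ($I^2=0$ implies $I^{[p]}=0$). Moreover $R[S^p]\hookrightarrow S$ is an epimorphism, so $J=0$ by \autoref{lem:epimorphism-alternative-characterizations}, and $0$ is generated by the idempotent $0$. The implication \autoref{prop:when-do-formally-b-nil-unramified-coincide.c} $\implies$ \autoref{prop:when-do-formally-b-nil-unramified-coincide.b} is trivial.

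For \autoref{prop:when-do-formally-b-nil-unramified-coincide.b} $\implies$ \autoref{prop:when-do-formally-b-nil-unramified-coincide.a}, verify the four conditions of \autoref{prop:necessary-sufficient-characterization-epimorphism} for $R[S^p]\hookrightarrow S$.
\begin{enumerate}[(i)]
\item Since $s^p\in R[S^p]$ for every $s\in S$, the extension is integral and purely inseparable. Hence $\Spec(S)\to\Spec(R[S^p])$ is injective: a prime $\frq$ is recovered as $\{s : s^p\in \frq\cap R[S^p]\}$.
\item By the same reasoning, the residue field extensions satisfy $\kappa(\frq)^p\subset\kappa(\frp)$, so they are purely inseparable.
\item Finite generation of $J$ is exactly the extra hypothesis in \autoref{prop:when-do-formally-b-nil-unramified-coincide.b}.
\item Formal unramifiedness of $R[S^p]\to S$ follows because $\varphi$ factors as $R\to R[S^p]\to S$, and the second map in a factorization of a formally unramified map is formally unramified. (This is the analog of \autoref{lem:basic-properties-b-nil-formally-unramified}~\autoref{lem:basic-properties-b-nil-formally-unramified.a}; equivalently, $\Omega_{S/R}\twoheadrightarrow\Omega_{S/R[S^p]}$.)
\end{enumerate}
So $R[S^p]\hookrightarrow S$ is an epimorphism, hence $F_\varphi$ is one, and \autoref{thm:b-nil-formally-unramified} gives \autoref{prop:when-do-formally-b-nil-unramified-coincide.a}.

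The argument is essentially a bookkeeping exercise. The only point needing care is (i)–(ii), the injectivity on spectra and pure inseparability for $R[S^p]\subset S$, which follows from $S^p\subset R[S^p]$ as explained. Alternatively, one can avoid citing \autoref{prop:necessary-sufficient-characterization-epimorphism} and argue directly. Here $J^{[p]}=0$, since $(s\otimes1-1\otimes s)^p=s^p\otimes1-1\otimes s^p=0$ because $s^p\in R[S^p]$. If $J$ is finitely generated, $J^{[p]}=0$ makes $J$ nilpotent, and $J=J^2$ then forces $J=0$. This bypasses the spectral conditions entirely, and I would present it as the main argument.
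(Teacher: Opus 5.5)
Your proof is correct. Its skeleton matches the paper's, but the step that actually does the work is different.

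\textbf{What the paper does.} It gets (a)$\iff$(b) exactly as in your first version: it applies \autoref{prop:necessary-sufficient-characterization-epimorphism} to $R[S^p]\hookrightarrow S$, as in the proof of \autoref{thm:F-finite-b-nil-formally-unramified}. It then proves (b)$\implies$(c) by noting $J=J^2$ and invoking the Cayley--Hamilton fact that a finitely generated idempotent ideal is generated by an idempotent.

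\textbf{Your (b)$\implies$(a).} Your main argument replaces Samuel's criterion with a direct computation. Since $S\otimes_{R[S^p]}S=S\otimes_{S^{(p)}}S$, you have $J^{[p]}=0$, which is just \autoref{lem:diagonal-relative-Frobenius}. So a finitely generated $J$ is nilpotent, and $J=J^2$ forces $J=0$. This avoids the radicial and locally-nilpotent argument inside the proof of Samuel's criterion. For this particular extension, conditions (i)--(ii) hold in the much stronger form that $J$ is nil of bounded index $p$.

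\textbf{Your route to (c).} This is also cleaner. Under (a), the extension $R[S^p]\hookrightarrow S$ is an epimorphism, so $J=0$ and the idempotent is simply $0$. More generally, because $J$ is always nil, any idempotent generator $e$ of $J$ satisfies $e=e^p=0$. So the Cayley--Hamilton step in the paper is unnecessary. Condition (c) could even be sharpened to $J=0$, which is equivalent to (a) on its own, making the formal-unramifiedness clause in (c) redundant.

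\textbf{Comparison.} The paper's presentation buys uniformity with \autoref{thm:F-finite-b-nil-formally-unramified}. Yours buys self-containment and the sharper conclusion that $\ker(\mu_{S/R[S^p]})$ vanishes under the equivalent conditions.
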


\begin{proof}
    As the proof of \autoref{thm:F-finite-b-nil-formally-unramified} demonstrates, both \autoref{prop:when-do-formally-b-nil-unramified-coincide.a} and \autoref{prop:when-do-formally-b-nil-unramified-coincide.b} are equivalent to $F_\varphi$ being an epimorphism because of \autoref{prop:necessary-sufficient-characterization-epimorphism}. Clearly \autoref{prop:when-do-formally-b-nil-unramified-coincide.c}$\implies$\autoref{prop:when-do-formally-b-nil-unramified-coincide.b}. To see \autoref{prop:when-do-formally-b-nil-unramified-coincide.b}$\implies$\autoref{prop:when-do-formally-b-nil-unramified-coincide.c}, note that if $\varphi$ is b-nil formally unramified, then so is $F_\varphi$, and hence also $R[S^p] \hookrightarrow S$ (both are applications of \autoref{lem:basic-properties-b-nil-formally-unramified}~\autoref{lem:basic-properties-b-nil-formally-unramified.a}). In particular, $R[S^p] \hookrightarrow S$ is formally unramified; that is, $\ker(\mu_{S/R[S^p]}) = \ker(\mu_{S/R[S^p]})^2$. But a finitely generated idempotent ideal of a ring must be generated by an idempotent element, as implied by the Cayley--Hamilton theorem.
\end{proof}

\begin{remark}
    \label{rem:relative-Frobenius-monic}
    Let $R \to S$ be a map of $\mathbf{F}_p$-algebras. By \cite[Theorem~3.21]{DattaOlanderRelFrobIso}, $R \to S$ is b-nil formally \'etale if and only if $F_{S/R}$ is an isomorphism, and we show that $R \to S$ is b-nil formally unramified if and only if $F_{S/R}$ is an epimorphism. So it is natural to ask whether $F_{S/R}$ being a monomorphism (i.e., injective) characterizes b-nil formal smoothness. Alas, this is not the case. For example, for any reduced $\mathbf{F}_p$-algebra $A$, $F_{A/\mathbf{F}
    _p}$ can be identified with the absolute Frobenius of $A$, and hence it is injective. But clearly, $\mathbf{F}_p \to A$ cannot be b-nil formally smooth in general when $A$ is reduced. For example, take a Noetherian $A$ that is reduced but not regular. Then $\mathbf{F}_p \to A$ is not a regular map; hence, $\mathbf{F}_p \to A$ is not (b-nil) formally smooth by \cite[$0_{\text{IV}}$,~Corollaire~19.6.5]{EGAIV_I}.
\end{remark}

\begin{remark}
Recall that given a ring map $\varphi\colon R \to S$, the module of $n$-principal parts is given by 
\[
P^n_{\varphi} \coloneqq (S \otimes_R S)/\ker(\mu_{\varphi})^{n+1} = S \oplus \ker(\mu_{\varphi})/\ker(\mu_{\varphi})^{n+1}.
\] 
%where $I = \ker \mu_{S/R}$. 
This is an $S$-module by the action $s \cdot a \otimes b = sa \otimes b$. The universal differential operator $d\colon S \to P^n_{\varphi}$ is given by $s \mapsto 1 \otimes s$. Then there is a natural isomorphism 
\[
\Hom_S(P^n_{\varphi}, M) \xrightarrow{\psi \mapsto \psi \circ d} D^n_R(S,M)
\] 
Here, $D^n_R(S,M)$ denotes $R$-linear differential operators $S \to M$ of order $\leq n$. We will write $D_R(S,M) = \bigcup_{n \geq 0} D_R^n(S,M)$. Note that if $\varphi$ is formally unramified, then 
$$\ker (\mu_{\varphi}) = \ker (\mu_{\varphi})^2 = \ker (\mu_{\varphi})^3 = \cdots,$$
hence $P^n_{\varphi} = S$ for all $n\geq 0$ and so $D^n_R(S,M) = M$ for all $n\geq 0$ and $S$-modules $M$. Conversely, if $D^1_R(S,M) = M$ for all $S$-modules $M$, then $\varphi$ is formally unramified. One might ask whether $D_R(S,S) = S$ implies $\varphi$ is formally unramified, but this is false. In the paper \cite{MukhopadhyaySmithTrivialDiffOps}, Mukhopadhyay and Smith construct examples of $k$-algebras $A$, with $k$ an arbitrary field, such that $D_k(A,A) = A$ but $k \to A$ is not formally unramified, see \cite[Theorem 1.1 and Remark 4.6]{MukhopadhyaySmithTrivialDiffOps}.

Replacing $\ker(\mu_{\varphi})^n$ with $\ker(\mu_{\varphi})^{[p^e]}$ for $e \geq 0$ and writing 
\[
P^{[p^e]}_{\varphi} \coloneqq (S \otimes_R S)/\ker(\mu_{\varphi})^{[p^e]}  = S \oplus \ker(\mu_{\varphi})/\ker(\mu_{\varphi})^{[p^e]},
\] 
we similarly obtain a natural isomorphism 
\[
D_R^{[p^e]}(S,M) \cong \Hom_S\Big(P^{[p^e]}_{\varphi}, M\Big), 
\] 
where $D^{[p^e]}_R(S,M)$ denotes the $\ker(\mu_\varphi)^{[p^e]}$-differential operators (see \cite[Section 2.2, especially Definition 2.10]{BrennerJeffriesNunezBetancourtQuantifyingSingularities}). In particular, $P_{\varphi}^{[p]} = S \oplus \ker (\mu_{\varphi})/\ker(\mu_{\varphi})^{[p]} = \mathfrak{B}_{\varphi}$.
From this we conclude that $\varphi$ is b-nil formally unramified if and only if $\mathfrak{B}_{\varphi} = 0$, if and only if 
\[
\ker(\mu_{\varphi}) = \ker (\mu_{\varphi})^{[p]} = \ker (\mu_{\varphi})^{[p^2]} = \cdots ,\]
if and only if $P^{[p^e]}_{\varphi} =S$ for all $e \geq 0$, if and only if 
%and so
%\[
%D^{[p]}_R(S,M) \cong M \oplus \Hom_S(\mathfrak{B}_{\varphi},M).
%\]
%From this, we conclude that $\varphi$ is b-nil formally unramified if and only if 
\[
D^{[p^e]}_R(S,M) = M
\]
for all $S$-modules $M$ and $e \geq 0$. Plugging in $M=S$, we obtain that if $\varphi$ is b-nil formally unramified, then %If$\varphi$ is b-nil formally unramified, then $I^{[p^e]} = I$ by \autoref{lem:basic-properties-b-nil-formally-unramified}~\autoref{lem:basic-properties-b-nil-formally-unramified.c.ii}. 
 %In particular, 
for all $e \geq 0$, we have $D_R^{[p^e]}(S,S) \cong S$ and so there are no non-trivial $\ker(\mu_{\varphi})^{[p^e]}$-differential operators to $S$. The converse does not hold since we have $D^n_R(S,S) \subseteq D^{[p^e]}_R(S,S)$ for $n \leq p^e-1$ since $S \otimes _R S/\ker(\mu_{\varphi})^{[p^e]} \twoheadrightarrow S \otimes _R S/\ker(\mu_{\varphi})^{n}$, see \cite[Lemma 1.4.8 a)]{YekutieliExplicitConstructionResidueComplex}). Then we can use \cite[Theorem 1.1]{MukhopadhyaySmithTrivialDiffOps}. 
% This shows that if $R \to S$ is b-nil formally unramified, the ring of $R$-linear differential operators on $S$ is trivial; that is, $D_R(S,S) = S$. ---- I think it is more elementary and stronger to say that if R \to S is formally unramified, then D_R(S,S) = S, see above.
  %In \cite{MukhopadhyaySmithTrivialDiffOps}, Mukhopadhyay and Smith construct examples of $k$-algebras $R$, with $k$ an arbitrary field, that admit no non-trivial differential operators but are not formally unramified, see \cite[Theorem 1.1 and Remark 4.6]{MukhopadhyaySmithTrivialDiffOps}. Such $k \to R$ are a fortiori not b-nil formally unramified. %So, for perfect $k$, these are not b-nil formally unramified by \autoref{prop:Noetherian-over-epimorphic-Frobenius}.
\end{remark}

\section{Some Consequences for Noetherian Rings}
\label{sec:noetherian}

With additional Noetherianity assumptions, one can improve \autoref{thm:b-nil-formally-unramified} as follows.

\begin{proposition}
    \label{prop:b-nil-formally-unramified-image-relFrobenius-Noetherian}
    Let $\varphi \colon R \to S$ be a map of $\mathbf{F}_p$-algebras. Assume that $R[S^p]$, the image of $F_\varphi$ in $S$, is Noetherian. Then the following are equivalent:
    \begin{enumerate}
        \item \label{prop:b-nil-formally-unramified-image-relFrobenius-Noetherian.a} $\varphi$ is b-nil formally unramified.
        \item \label{prop:b-nil-formally-unramified-image-relFrobenius-Noetherian.b} $F_\varphi$ is an epimorphism.
        \item \label{prop:b-nil-formally-unramified-image-relFrobenius-Noetherian.c} $F_\varphi$ is surjective.
        \item \label{prop:b-nil-formally-unramified-image-relFrobenius-Noetherian.d} $\varphi$ is formally unramified and $S$ is Noetherian.
    \end{enumerate}
\end{proposition}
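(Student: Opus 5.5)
The equivalence of (a) and (b) is \autoref{thm:b-nil-formally-unramified}, and (c) $\implies$ (b) holds because surjections are epimorphisms. The substantive steps are therefore (b) $\implies$ (c), then (c) $\implies$ (d), and finally (d) $\implies$ (c).

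\emph{(b) $\implies$ (c).} By \autoref{lem:epimorphism-extension-reduction}, the inclusion $R[S^p] \hookrightarrow S$ is an epimorphism. It is integral, since every $s \in S$ satisfies $s^p \in R[S^p]$. Because $R[S^p]$ is Noetherian, \autoref{prop:epic-implies-surjective}~\autoref{prop:epic-implies-surjective.c} shows that $R[S^p] = S$, so $F_\varphi$ is surjective. (If $S = 0$, everything is trivial.)

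\emph{(c) $\implies$ (d).} If $F_\varphi$ is surjective, then $S = R[S^p]$ is Noetherian. Moreover, $\varphi$ is b-nil formally unramified by \autoref{thm:b-nil-formally-unramified}, and b-nil formal unramifiedness implies formal unramifiedness: any ideal $I$ with $I^2 = 0$ also satisfies $I^{[p]} = 0$, so uniqueness of lifts across such $I$ is a special case of the b-nil condition.

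\emph{(d) $\implies$ (c).} This is the step I expect to be the main obstacle. Assume $\varphi$ is formally unramified and $S$ is Noetherian. First, $R[S^p] \hookrightarrow S$ is formally unramified, as in the proof of \autoref{thm:F-finite-b-nil-formally-unramified}. Next, $R[S^p] \hookrightarrow S$ is integral with $S^p \subset R[S^p]$, so $\Spec(S) \to \Spec(R[S^p])$ is injective with purely inseparable residue field extensions. Finally, $\ker(\mu_{S/R[S^p]})$ is a finitely generated ideal: it is generated by the elements $s \otimes 1 - 1 \otimes s$ as $s$ ranges over ring generators of $S$ over $R[S^p]$, and $S$ is a finitely generated $R[S^p]$-algebra. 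To see the latter, note that $S$ is Noetherian, and an integral extension $A \subset S$ with $A$ Noetherian would be finite by Eakin–Nagata; here, however, the Noetherian ring is $S$ and the subring is $R[S^p]$.

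My plan for the finite generation is this. The ideal $J = \ker(\mu_{S/R[S^p]})$ is also the kernel of $S \otimes_{R[S^p]} S \to S$. Take finitely many generators $x_1,\dots,x_n$ of the ideal $\ker(\mu_{S/S^p[\ldots]})$, or more directly, note that $J$ is generated by the images of $J' = \ker(S \otimes_{\mathbf{Z}} S \to S)$. Then use that $S\otimes S \to S$ has kernel generated by $s\otimes 1-1\otimes s$, which as an $S$-module (via the left factor) is generated by these elements; it suffices that $J/J^2$ be handled. Alternatively, and I would try this first, verify that $J$ is locally nilpotent ($J^{[p]}=0$-type argument as in \autoref{lem:diagonal-relative-Frobenius}: $(s\otimes1-1\otimes s)^p=0$) and idempotent, then argue locally. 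Localize at a prime of $S\otimes_{R[S^p]}S$ containing $J$; these correspond to primes of $S$, and $J$ is the kernel of a surjection onto a localization of the Noetherian ring $S$. It is idempotent and nil, so by an argument on $J/\mathfrak{m}J$ one aims to show $J_{\mathfrak m}=0$, giving an epimorphism and then (c) by the argument for (b) $\implies$ (c).

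If that local argument stalls (Nakayama needs finite generation), the fallback is to use the characterization of \cite[Theorem~4.1]{CarvajalRojasStablerPristineMorphisms}. Note first that $S^{(p)}\to R[S^p]$ is surjective but $S^{(p)}$ need not be Noetherian. One could replace $R$ by $R' = R[S^p]$ and observe that, under $S$ Noetherian and $S^{(p)}$ replaced appropriately, surjectivity of $F_{S/R'}$ is equivalent to $R'=S$; the cited theorem then applies when its Noetherian hypotheses can be arranged, for instance after localizing and completing.
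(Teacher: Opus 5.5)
Your arguments for (a)$\Leftrightarrow$(b), (b)$\Rightarrow$(c) and (c)$\Rightarrow$(d) are correct and match the paper. The gap is (d)$\Rightarrow$(c), which is the real content of the proposition, and none of your three attempts closes it.

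\emph{The epimorphism criterion.} The route through \autoref{prop:necessary-sufficient-characterization-epimorphism} needs $J=\ker(\mu_{S/R[S^p]})$ to be finitely generated, and nothing you write produces this. Noetherianity of $S$ and $R[S^p]$ does not make $S$ finite, or even of finite type, over $R[S^p]$; take $K^p\subset K$ with $[K:K^p]=\infty$. Also, $S\otimes_{R[S^p]}S$ need not be Noetherian. Your aside on Eakin--Nagata is misstated: that theorem descends Noetherianity along finite extensions, and integral extensions of Noetherian rings need not be finite. The remark that ``it suffices that $J/J^2$ be handled'' misses the point, since $J/J^2\cong\Omega_{S/R[S^p]}=0$ is already the hypothesis.

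\emph{The local argument.} It fails for the reason you suspect: $J$ is nil and idempotent, but without finite generation this does not force $J=0$, and localizing does not create finite generation. The example of \autoref{sec:formally-unramified-not-bnil} is exactly this situation with $R=k$. There $R[S^p]=k$ is Noetherian, $\varphi$ is formally unramified, and $J$ is nil, idempotent and nonzero; only Noetherianity of $S$ fails. So that hypothesis must be used in an essential way, which a Nakayama-type argument cannot do.

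\emph{The fallback.} Invoking \cite[Theorem~4.1]{CarvajalRojasStablerPristineMorphisms} needs the Frobenius twist to be Noetherian: $S^{(p)}$, or $F_*R[S^p]\otimes_{R[S^p]}S$ after your replacement. That is not assumed; the proposition only assumes the weaker condition that $R[S^p]$ is Noetherian. ``Localizing and completing'' is not an argument that supplies it.

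\emph{The missing idea.} Since $S^p\subset R[S^p]$, the Frobenius of $S$ factors as $S\xrightarrow{\theta}R[S^p]\xrightarrow{\iota}S$ with $\theta(s)=s^p$. Hence $\theta\circ\iota=F_{R[S^p]}$ and $\iota\circ\theta=F_S$. Both rings are Noetherian, and $\iota$ is formally unramified because $\Omega_{S/R[S^p]}=\Omega_{S/R}=0$. André's theorem (\autoref{prop:epimorphism-surjective-Noetherian}) then shows $\iota$ is surjective, i.e.\ $R[S^p]=S$, which is (c). This input is genuinely deeper than the epimorphism criterion: finite generation of $J$ is only known a posteriori, once $R[S^p]=S$ has been established.
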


The proof of this proposition will use the following result.

\begin{proposition}
    \label{prop:epimorphism-surjective-Noetherian}
    Let $\psi \colon A \to B$ be a map of Noetherian $\mathbf{F}_p$-algebras. Suppose there exists a ring map $\theta \colon B \to A$ such that $\theta \circ \psi = F^e_A$ and $\psi \circ \theta = F^e_B$, for some integer $e > 0$. Then $\psi$ is formally unramified (if and) only if it is surjective.
\end{proposition}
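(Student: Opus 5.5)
The plan is to reduce to an inclusion $A' := \psi(A) \subset B$, show that $A' \hookrightarrow B$ is an epimorphism, and then conclude by \autoref{prop:epic-implies-surjective}~\autoref{prop:epic-implies-surjective.c}. Only the ``only if'' direction needs proof, since surjections are epimorphisms and hence formally unramified by \autoref{cor:epimorphisms-formally-unramified}. The hard part will be the finite generation condition \autoref{prop:necessary-sufficient-characterization-epimorphism.iii} in the epimorphism criterion.

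\emph{Setup.} Write $q = p^e$. Then $A' \cong A/\ker\psi$ is Noetherian. Since $\psi\circ\theta = F^e_B$, we get $b^q = \psi(\theta(b)) \in A'$ for every $b \in B$. So $A' \subset B$ is integral and ``purely inseparable of height $\le e$''. By \autoref{lem:basic-properties-b-nil-formally-unramified}~\autoref{lem:basic-properties-b-nil-formally-unramified.a} (or its classical analogue for formal unramifiedness), $A' \hookrightarrow B$ is still formally unramified, since $\psi$ factors through it.

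\emph{Checking the epimorphism criterion.} We apply \autoref{prop:necessary-sufficient-characterization-epimorphism} to $A' \hookrightarrow B$.
\begin{enumerate}[(i)]
\item $\Spec(B)\to\Spec(A')$ is injective: if $\frq\cap A' = \frq'\cap A'$, then $b\in\frq \iff b^q\in\frq\cap A' \iff b\in\frq'$.
\item The residue field extensions are purely inseparable, since $\kappa(\frq)^q\subset\kappa(\frp)$.
\item[(iv)] This is formal unramifiedness, which we have.
\end{enumerate}
The remaining condition \autoref{prop:necessary-sufficient-characterization-epimorphism.iii} asks that $I := \ker(\mu_{B/A'})$ be finitely generated. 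What we already know is useful here: $I$ is generated by the elements $b\otimes 1-1\otimes b$. Each satisfies $(b\otimes1-1\otimes b)^q = b^q\otimes 1 - 1\otimes b^q = 0$, because $b^q\in A'$. Hence $I^{[q]}=0$, and also $I = I^2 = I^n$ for all $n$.

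\emph{Main obstacle: finiteness.} The crux is to extract finite generation, equivalently nilpotence, of $I$ from the Noetherian hypotheses. Note that $B$ need not be finite over $A'$, since nothing is assumed about $F$-finiteness. My first attempt is to show that $C := B\otimes_{A'} B$ is Noetherian, using $\theta$. Via the first factor, $C$ is a $B$-algebra, and every element $1\otimes b$ satisfies $(1\otimes b)^q = \psi\theta(b)\otimes 1$, which lies in the image of $B$. So $C$ is a quotient of $B\otimes_{A'}B$ in which the second factor is ``$q$-th-root-integral'' over the first. I would try to realize $C$ as a quotient of the Noetherian ring $B\otimes_{F^e} \cdots$ obtained by twisting along $\theta$, in the spirit of how $\theta$ and $\psi$ interleave Frobenius.

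If $C$ is Noetherian, then $I$ is finitely generated. Being idempotent, $I$ is then generated by an idempotent $\epsilon$, and $\epsilon = \epsilon^q \in I^{[q]} = 0$. So $I = 0$, $A'\hookrightarrow B$ is an epimorphism, and \autoref{prop:epic-implies-surjective}~\autoref{prop:epic-implies-surjective.c} gives $A' = B$, i.e.\ $\psi$ is surjective.

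\emph{Fallback.} If the global Noetherianity of $C$ is out of reach, I would use \autoref{lem:epimorphisms-local-property} to localize at primes of $A'$, where $B_\frp$ is local with maximal ideal $\frp B_\frp$ by (i)--(ii). I would then use Krull's intersection theorem on the Noetherian local ring $B_\frp$ to show that $I_\frp=\bigcap_n I_\frp^n$ vanishes after suitable reduction.
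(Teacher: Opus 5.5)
\textbf{There is a genuine gap: the finiteness of $I$ is never proved.} Several parts of your proposal are fine:
the reduction to the inclusion $A' = \psi(A) \subseteq B$ with $B^q \subseteq A'$; the facts $I = I^2$ and $I^{[q]} = 0$ for $I = \ker(\mu_{B/A'})$; and the endgame, where a finitely generated idempotent ideal with $I^{[q]}=0$ must vanish and then \autoref{prop:epic-implies-surjective}~\autoref{prop:epic-implies-surjective.c} applies.

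What you leave unproved is exactly the step that carries the content of the statement: that $I$ is finitely generated, equivalently $I = 0$. Everything you actually establish also holds in the example of \autoref{sec:formally-unramified-not-bnil}. Take $\psi\colon k \hookrightarrow A$ there, so $k$ plays your $A$ and $A'$ and that section's $A$ plays your $B$. Then:
\begin{itemize}
\item conditions (i), (ii), (iv) hold, together with $I=I^2$ and $I^{[q]}=0$;
\item $\theta(a) = a^p$ is a ring map $A \to k$ with $\theta\circ\psi = F_k$ and $\psi\circ\theta = F_A$;
\item yet $k \to A$ is not an epimorphism, so $I \neq 0$.
\end{itemize}
The only hypothesis that fails there is Noetherianity of the target. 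So that hypothesis must be used essentially in precisely the step you leave open. The paper does not prove this step by hand either; it quotes Andr\'e's theorem via Ma--Polstra.

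\textbf{Neither suggested route fills the gap.}

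\emph{First route.} No argument using only $\theta$ and Noetherianity of $A$ and $B$ can show that $C = B\otimes_{A'}B$ is Noetherian. Let $k = \mathbf{F}_p(t_1,t_2,\dots)$, $B = k^{1/p}$, $\psi$ the inclusion and $\theta(x) = x^p$. Then $B\otimes_k B \cong B[Y_1,Y_2,\dots]/(Y_1^p,Y_2^p,\dots)$, which is not Noetherian. So formal unramifiedness would have to enter. Since the conclusion gives $C = B$, proving $C$ Noetherian is essentially the proposition itself.

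\emph{Fallback route.} First, (i)--(ii) alone do not make $\mathfrak{p}B_{\mathfrak{p}}$ the maximal ideal of $B_{\mathfrak{p}}$. For instance, take $k[x^p]\subseteq k[x]$ at $\mathfrak{p}=(x^p)$. This needs formal unramifiedness and Noetherianity of $B$.

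More seriously, $I_{\mathfrak{p}}$ is an ideal of $C_{\mathfrak{p}} = B_{\mathfrak{p}}\otimes_{A'_{\mathfrak{p}}}B_{\mathfrak{p}}$, not of $B_{\mathfrak{p}}$. The multiplication map $C_{\mathfrak{p}}\to B_{\mathfrak{p}}$ kills $I_{\mathfrak{p}}$, so Krull's intersection theorem in $B_{\mathfrak{p}}$ says nothing about $\bigcap_n I_{\mathfrak{p}}^n$.

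In fact, once $\mathfrak{p}B_{\mathfrak{p}}$ is maximal with residue field $\kappa(\mathfrak{p})$, one has $B_{\mathfrak{p}} = A'_{\mathfrak{p}} + \mathfrak{p}^nB_{\mathfrak{p}}$ for all $n$. Hence $I_{\mathfrak{p}}\subseteq\bigcap_n \mathfrak{p}^nC_{\mathfrak{p}}$ holds automatically. What is missing is a separatedness statement for the non-Noetherian ring $C_{\mathfrak{p}}$, and nothing in your proposal provides it.
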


\begin{proof}
 As stated, the result appears in \cite[Theorem~11.2]{MaPolstraFSing}, where it is credited to Andr\'e \cite[Proposition~57]{AndreHomologyFrobenius}.
\end{proof}

\begin{proof}[Proof of \autoref{prop:b-nil-formally-unramified-image-relFrobenius-Noetherian}]
    \autoref{prop:b-nil-formally-unramified-image-relFrobenius-Noetherian.a} $\Longleftrightarrow$ \autoref{prop:b-nil-formally-unramified-image-relFrobenius-Noetherian.b} by \autoref{thm:b-nil-formally-unramified}. To show \autoref{prop:b-nil-formally-unramified-image-relFrobenius-Noetherian.b}   $\Longleftrightarrow$\autoref{prop:b-nil-formally-unramified-image-relFrobenius-Noetherian.c}, it is enough to show that if $F_\varphi$ is an epimorphism then $F_\varphi$ is surjective since the other implication is trivial. Since $R[S^p]$ is the image of the integral ring map $F_\varphi$, if $F_\varphi$ is an epimorphism, then it is surjective by \autoref{prop:epic-implies-surjective}~\autoref{prop:epic-implies-surjective.c}. It remains to show \autoref{prop:b-nil-formally-unramified-image-relFrobenius-Noetherian.c}$\Longleftrightarrow$ \autoref{prop:b-nil-formally-unramified-image-relFrobenius-Noetherian.d}. Assuming \autoref{prop:b-nil-formally-unramified-image-relFrobenius-Noetherian.c}, we have $S = \im(F_\varphi) = R[S^p]$ is Noetherian. Also, since surjective maps are epimorphisms, $\varphi$ is b-nil formally unramified by \autoref{thm:b-nil-formally-unramified} hence certainly formally unramified, giving \autoref{prop:b-nil-formally-unramified-image-relFrobenius-Noetherian.d}. Finally, for \autoref{prop:b-nil-formally-unramified-image-relFrobenius-Noetherian.d}$\implies$\autoref{prop:b-nil-formally-unramified-image-relFrobenius-Noetherian.c}, consider the inclusion of Noetherian rings $R[S^p] \hookrightarrow S$. Since $\Omega_{S/R} = \Omega_{S/R[S^p]}$, this inclusion is formally unramified since $\varphi \colon R \to S$ is. But the absolute Frobenius $F_S$ of $S$ has image in $R[S^p]$. Thus, restricting the co-domain of $F_S$ we get a map $S \to R[S^p]$ that just raises elements to the $p$-th power. By construction, the composition $S \to R[S^p] \hookrightarrow S$ is equal to $F_S$ and the composition $R[S^p] \hookrightarrow S \to R[S^p]$ is equal to $F_{R[S^p]}$. Then $R[S^p] \hookrightarrow S$ is surjective by \autoref{prop:epimorphism-surjective-Noetherian}, i.e., $\im(F_\varphi) = R[S^p] = S$, giving \autoref{prop:b-nil-formally-unramified-image-relFrobenius-Noetherian.c}.
\end{proof}

\begin{remarks}
Let $R \to S$ be a map of $\mathbf{F}_p$-algebras, and denote by $R[S^p]$, as usual, the image of the relative Frobenius $F_{S/R}$.
\begin{enumerate}
    \item \autoref{prop:b-nil-formally-unramified-image-relFrobenius-Noetherian}~\autoref{prop:b-nil-formally-unramified-image-relFrobenius-Noetherian.d}$\implies$\autoref{prop:b-nil-formally-unramified-image-relFrobenius-Noetherian.c} was first observed by the first and fourth authors in \cite[Theorem~4.1]{CarvajalRojasStablerPristineMorphisms}.

    \item There exist examples where $R, S$ are Noetherian but $S^{(p)}$ is not. In fact, Radu proved in \cite[Corollaire 5]{RaduUneClasseDAnneaux} that if $R$ is a Noetherian local $\mathbf{F}_p$-algebra and $S = \widehat{R}$ is its completion, then $S^{(p)}$ is Noetherian if and only if $R$ is a G-ring.  Indeed, the relative Frobenius $F_{S/R}$ is identified with the completion of the local ring $S^{(p)}$ with respect to its maximal ideal, see for example \cite[Lemma~4.13]{DattaOlanderRelFrobIso}. Thus $S^{(p)}$ is Noetherian if and only if $F_{S/R}$ is flat: If $F_{S/R}$ is flat, hence faithfully flat, then $S^{(p)}$ is Noetherian by descent; and conversely, if $S^{(p)}$ is Noetherian, then its completion map $F_{S/R}$ is flat. By the relative version of Kunz's Theorem due to Radu and Andr\'e, see \cite{RaduUneClasseDAnneaux} and \cite{AndreHomomorphismsRegulariers}, the flatness of $F_{S/R}$ is in turn equivalent to $R \to S$ being a regular homomorphism; that is, to $R$ being a G-ring.

    \item An example where $R, S$ are Noetherian but both $S^{(p)}$ and $R[S^p]$ are not Noetherian is given by the completion map $R \to \widehat{R}$ of a Nagata local ring $(R,\m)$ of characteristic $p$ that is not a G-ring. % quasi-excellent \cite{RotthausNagatanotQuasiExcellent}, or equivalently, $R \to \widehat{R}$ is not regular. 
    Indeed, by \cite[\href{https://stacks.math.columbia.edu/tag/0BJ0}{Tag 0BJ0}]{stacks-project}, the map $R \to \widehat{R}$ has geometrically reduced fibers. This implies, by \cite[Theorem]{DumitrescuReduceness}, that $F_{\widehat{R}/R}$ is universally injective as a map of $F_*R$-algebras, hence in particular injective. Then $R[\widehat{R}^p] \cong F_*R \otimes_R \widehat{R}$ which is not Noetherian by the previous paragraph. %By \cite[Corollaire 5]{RaduUneClasseDAnneaux}, the ring $F_\ast R \otimes_R \widehat{R}$ is not Noetherian.

    \item There are also examples where $R$ and $S$ are Noetherian $\mathbf{F}_p$-algebras, $S^{(p)}$ is not Noetherian, but $R[S^p]$ is Noetherian. Indeed, let $(R,\m, \kappa)$ be any non-excellent DVR of characteristic $p$ such that the residue field $\kappa$ is $F$-finite (see \cite{DattaSmithExcellence} for such examples). Again, let $S = \widehat{R}$ be $\m$-adic completion. Then $F_{\widehat{R}/R}$ is surjective by \cite[Proposition~4.17]{DattaOlanderRelFrobIso} so $R[\widehat{R}^p] = \widehat{R}$ is Noetherian. However, $\widehat{R}^{(p)} = F_*R \otimes_R \widehat{R}$ cannot be Noetherian -- that is, $R$ cannot be a G-ring. Indeed, since $R$ is Noetherian regular local, one would then automatically get that $R$ is excellent since a local $G$-ring is automatically quasi-excellent \cite[(33.D),~Theorem~76]{MatsumuraCommutativeAlgebra} and a regular ring is universally catenary, contradicting our choice of $R$. 
    %Indeed, the relative Frobenius $F_{\widehat{R}/R}$ can be identified with the canonical map associated with the completion of the local ring $\widehat{R}^{(p)}$ with respect to the finitely generated maximal ideal $\m \widehat{R}^{(p)} \cong F_*\m \otimes_R \widehat{R}$ (see \cite[Lemma~4.13]{DattaOlanderRelFrobIso} for a more general fact about the relative Frobenius of ideal-adic completion maps). %Thus, if $\widehat{R}^{(p)}$ is Noetherian, then $F_{\widehat{R}/R}$ would be faithfully flat. However, this would imply $R \to \widehat{R}$  has geometrically regular fibers, that is, $\widehat{R}$ is a $G$-ring. Since $R$ is Noetherian regular local, one would then automatically get that $R$ is excellent since a local $G$-ring is automatically quasi-excellent \cite[(33.D),~Theorem~76]{MatsumuraCommutativeAlgebra} and a regular ring is universally catenary, contradicting our choice of $R$.
\end{enumerate}

\end{remarks}

We now show that for many types of maps between Noetherian $\mathbf{F}_p$-algebras, formally unramified is equivalent to b-nil formally unramified.

\begin{corollary}
    \label{cor:cases-bnil-formally-unramified-equivalent-Noetherian}
    Let $\varphi \colon R \to S$ be a map of $\mathbf{F}_p$-algebras such that $S$ is Noetherian. Then $\varphi$ is formally unramified if and only if it is b-nil formally unramified in the following situations:
    \begin{enumerate}
        \item $\varphi$ is $F$-finite.\label{cor:cases-bnil-formally-unramified-equivalent-Noetherian.a}
        \item $S$ is $F$-finite.\label{cor:cases-bnil-formally-unramified-equivalent-Noetherian.b}
        \item $R$ is $F$-finite.\label{cor:cases-bnil-formally-unramified-equivalent-Noetherian.c}
        %\item $\varphi$ is $F$-pure.\label{cor:cases-bnil-formally-unramified-equivalent-Noetherian.d}
        \item $F_\varphi$ is cyclically pure (e.g., if $\varphi$ is $F$-pure).\label{cor:cases-bnil-formally-unramified-equivalent-Noetherian.d}
    \end{enumerate}
    Furthermore, in all these cases, $F_\varphi$ is surjective if $\varphi$ is (b-nil) formally unramified.
\end{corollary}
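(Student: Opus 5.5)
In each case the implication ``b-nil formally unramified $\Rightarrow$ formally unramified'' holds trivially, since $I^2=0$ forces $I^{[p]}=0$. So the plan is to assume $\varphi$ formally unramified with $S$ Noetherian and prove that $F_\varphi$ is surjective. A surjection is an epimorphism, so \autoref{thm:b-nil-formally-unramified} then shows $\varphi$ is b-nil formally unramified. This also proves the final ``furthermore'' clause.

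For \autoref{cor:cases-bnil-formally-unramified-equivalent-Noetherian.a}, \autoref{thm:F-finite-b-nil-formally-unramified} applies directly: for $F$-finite $\varphi$, formally unramified already implies $F_\varphi$ surjective. Here Noetherianity is not even needed. For \autoref{cor:cases-bnil-formally-unramified-equivalent-Noetherian.b}, $F_S$ factors through $F_\varphi$. Since $F_S$ is finite, $F_\varphi$ is finite, so $\varphi$ is $F$-finite and we are back in case \autoref{cor:cases-bnil-formally-unramified-equivalent-Noetherian.a}.

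The remaining two cases go through \autoref{prop:b-nil-formally-unramified-image-relFrobenius-Noetherian}. Its implication \autoref{prop:b-nil-formally-unramified-image-relFrobenius-Noetherian.d}$\Rightarrow$\autoref{prop:b-nil-formally-unramified-image-relFrobenius-Noetherian.c} says: if $R[S^p]$ is Noetherian, $\varphi$ is formally unramified and $S$ is Noetherian, then $F_\varphi$ is surjective. So it suffices to show $R[S^p]$ is Noetherian.

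In case \autoref{cor:cases-bnil-formally-unramified-equivalent-Noetherian.c}, $F_*R$ is a finite $R$-module. Hence $S^{(p)}=F_*R\otimes_R S$ is a finite $S$-algebra via $\id_{F_*R}\otimes_R \varphi$. As $S$ is Noetherian, $S^{(p)}$ is Noetherian, and so is its quotient $R[S^p]=\im(F_\varphi)$.

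In case \autoref{cor:cases-bnil-formally-unramified-equivalent-Noetherian.d}, cyclic purity means $IS\cap S^{(p)}=I$ for every ideal $I\subseteq S^{(p)}$, where we identify along $F_\varphi$. First take $I=0$: this shows $F_\varphi$ is injective, so $S^{(p)}\cong R[S^p]$. Next, the assignment $I\mapsto IS$ is injective and order preserving, because $I$ is recovered as $IS\cap S^{(p)}$. An ascending chain of ideals in $S^{(p)}$ therefore gives an ascending chain in the Noetherian ring $S$. That chain stabilizes, and contracting back shows the original chain stabilizes. Thus $R[S^p]\cong S^{(p)}$ is Noetherian. For the parenthetical, $F$-pure means $F_\varphi$ is universally injective. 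Tensoring with $S^{(p)}/I$ then shows $S^{(p)}/I\to S/IS$ is injective, which is exactly cyclic purity.

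Given the earlier results, no step looks hard. The only point needing care is the cyclic purity argument in case \autoref{cor:cases-bnil-formally-unramified-equivalent-Noetherian.d}: one must remember to use $I=0$ to identify $S^{(p)}$ with the image $R[S^p]$ before transferring the Noetherian property.
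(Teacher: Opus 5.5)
Your proof is correct and follows the paper's argument: (a) via \autoref{thm:F-finite-b-nil-formally-unramified}, (b) by reduction to (a), and (c) and (d) by showing that $R[S^p]$ is Noetherian and then invoking \autoref{prop:b-nil-formally-unramified-image-relFrobenius-Noetherian}. The paper also sketches a uniform route for (a), using descent of Noetherianity along the finite extension $R[S^p]\subseteq S$; and in (d) your use of $I=0$ is harmless but unnecessary, since $R[S^p]$ is a quotient of $S^{(p)}$ in any case.
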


Recall, a ring homomorphism $A \to B$ is \emph{cyclically pure} if, for all ideals $I$ of $A$, the induced map $A/I \to B/IB$ is injective. Thus, universally injective ring maps are cyclically pure. The converse holds in many situations when $A$ is a `nice' Noetherian ring \cite{HochsterCyclicPurity}. Note that taking $I = 0$, a cyclically pure ring map is injective.

\begin{proof}[Proof of \autoref{cor:cases-bnil-formally-unramified-equivalent-Noetherian}]
    \autoref{cor:cases-bnil-formally-unramified-equivalent-Noetherian.a} follows by \autoref{thm:F-finite-b-nil-formally-unramified} and \autoref{cor:cases-bnil-formally-unramified-equivalent-Noetherian.b} is a consequence of \autoref{cor:cases-bnil-formally-unramified-equivalent-Noetherian.a} because if $S$ is $F$-finite then $\varphi$ is $F$-finite, that is, $F_\varphi$ is a finite map. These two parts do not require $S$ to be Noetherian. However, we can also give a uniform argument to show all five cases. Namely, by \autoref{prop:b-nil-formally-unramified-image-relFrobenius-Noetherian}, it suffices to show that in all five cases, $R[S^p]$, the image of $F_\varphi$, is a Noetherian ring.

    In \autoref{cor:cases-bnil-formally-unramified-equivalent-Noetherian.a}, $R[S^p]$ is Noetherian because $R[S^p] \hookrightarrow S$ is a finite extension by the assumption that $\varphi$ is $F$-finite and it is a result of Mollier that the property of being Noetherian descends along finite extensions \cite[Part~II,~Corollaire~1.2.5]{rg71}.

    As before, \autoref{cor:cases-bnil-formally-unramified-equivalent-Noetherian.b} follows by \autoref{cor:cases-bnil-formally-unramified-equivalent-Noetherian.a}.

    If $R$ is $F$-finite, then $F_R \otimes_R \id_S \colon S \to F_*R \otimes_R S$ is finite, and hence $F_*R \otimes_R S$ is Noetherian. Since $R[S^p]$ is a homomorphic image of $F_*R \otimes_R S$ it is Noetherian as well. Thus, we get \autoref{cor:cases-bnil-formally-unramified-equivalent-Noetherian.c}.

    To show \autoref{cor:cases-bnil-formally-unramified-equivalent-Noetherian.d}, note that if $A \to B$ is a cyclically pure ring map and $B$ is Noetherian, then any ascending chain of ideals of $A$ must stabilize because the chain stabilizes after expanding to $B$ and because for any ideal $I$ of $A$ we have $IB \cap A = I$ by the cyclic purity of $A \to B$. Thus, $R[S^p] \cong F_*R \otimes_R S$ is Noetherian.

    The final assertion about surjectivity of $F_\varphi$ follows by \autoref{prop:epimorphism-surjective-Noetherian}.
\end{proof}

In fact, when the relative Frobenius is cyclically pure and the target ring is Noetherian, one can say more. This strengthens \autoref{cor:bnil-unramified-etale-equal-F-pure}.

\begin{corollary}
    \label{cor:b-nil-formally-unramified-regular-maps}
    Let $\varphi \colon R \to S$ be a map of $\mathbf{F}_p$-algebras such that $S$ is Noetherian and $F_\varphi$ is cyclically pure. Then the following are equivalent:
    \begin{enumerate}
        \item $\varphi$ is formally \'etale.\label{cor:b-nil-formally-unramified-regular-maps.a}
        \item $\varphi$ is formally unramified.\label{cor:b-nil-formally-unramified-regular-maps.b}
        \item $\varphi$ is b-nil formally unramified.\label{cor:b-nil-formally-unramified-regular-maps.c}
        \item $\varphi$ is b-nil formally \'etale.\label{cor:b-nil-formally-unramified-regular-maps.d}
    \end{enumerate}
    Thus, if $\varphi$ is a regular map of Noetherian rings, i.e., a flat map with geometrically regular fibers where $R$ and $S$ are both Noetherian, then the above four conditions are equivalent.
\end{corollary}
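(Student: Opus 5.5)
The plan is to prove the cycle (a)$\Rightarrow$(b)$\Rightarrow$(c)$\Rightarrow$(d)$\Rightarrow$(a), where the letters refer to the items in the order listed. Two of these implications are trivial. Formally \'etale maps are formally unramified, so (a)$\Rightarrow$(b). By \autoref{lem:basic-properties-b-nil-formally-unramified}, a b-nil formally \'etale map is in particular b-nil formally unramified, and b-nil formally unramified maps are formally unramified, since ideals with $I^{[p]}=0$ include all square-zero ideals; hence (d)$\Rightarrow$(a) will follow once I also show that b-nil formally \'etale implies formally \'etale. For that I use \cite[Theorem~3.21]{DattaOlanderRelFrobIso}: b-nil formally \'etale is equivalent to $F_\varphi$ being an isomorphism. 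The lifting diagram \autoref{equn-thediagram} with $I^2=0$ has $I^{[p]}=0$ because $p\ge 2$, so every nilpotent-lifting problem of square-zero type is a special case of a b-nil one. Thus b-nil formally \'etale implies formally \'etale directly from the definitions.

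For (b)$\Rightarrow$(c) I apply \autoref{cor:cases-bnil-formally-unramified-equivalent-Noetherian}~\autoref{cor:cases-bnil-formally-unramified-equivalent-Noetherian.d}. Its hypotheses are exactly ours: $S$ is Noetherian and $F_\varphi$ is cyclically pure. It yields that formally unramified is equivalent to b-nil formally unramified, and moreover that $F_\varphi$ is then surjective.

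For (c)$\Rightarrow$(d), the corollary just cited gives that $F_\varphi$ is surjective. Since it is cyclically pure, taking $I=0$ shows it is injective. Hence $F_\varphi$ is an isomorphism, and \cite[Theorem~3.21]{DattaOlanderRelFrobIso} gives that $\varphi$ is b-nil formally \'etale. This closes the cycle.

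For the final assertion, suppose $\varphi$ is a regular map of Noetherian rings. Then $S$ is Noetherian, so it only remains to check that $F_\varphi$ is cyclically pure. I expect this to be the one step needing outside input. By the Radu--Andr\'e relative Kunz theorem cited in the remarks above (\cite{RaduUneClasseDAnneaux}, \cite{AndreHomomorphismsRegulariers}), $F_\varphi$ is flat. Moreover $\Spec(F_\varphi)$ is a homeomorphism, so $F_\varphi$ is faithfully flat. A faithfully flat map is universally injective, hence cyclically pure, and the first part applies. An alternative route is to note that regular maps have geometrically reduced fibers, so $F_\varphi$ is universally injective by \cite{DumitrescuReduceness}; but the flatness route is cleaner.
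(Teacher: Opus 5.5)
Your proposal is correct and follows essentially the same route as the paper. You get (b)$\Rightarrow$(c) from the Noetherian, cyclically pure case of the preceding corollary; (c)$\Rightarrow$(d) from surjectivity of $F_\varphi$ together with injectivity from cyclic purity, followed by \cite[Theorem~3.21]{DattaOlanderRelFrobIso}; and the final assertion from the Radu--Andr\'e flatness of $F_\varphi$ upgraded to faithful flatness. Your extra details (square-zero ideals satisfy $I^{[p]}=0$, and $\Spec(F_\varphi)$ is a homeomorphism) only make explicit what the paper treats as standard, and your appeal to Theorem~3.21 inside the (d)$\Rightarrow$(a) step is unnecessary.
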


\begin{proof}
We always have \autoref{cor:b-nil-formally-unramified-regular-maps.d}$\implies$\autoref{cor:b-nil-formally-unramified-regular-maps.a}$\implies$\autoref{cor:b-nil-formally-unramified-regular-maps.b}. Also, \autoref{cor:b-nil-formally-unramified-regular-maps.b}$\implies$\autoref{cor:b-nil-formally-unramified-regular-maps.c} by \autoref{cor:cases-bnil-formally-unramified-equivalent-Noetherian}. Thus, it suffices to show \autoref{cor:b-nil-formally-unramified-regular-maps.c}$\implies$\autoref{cor:b-nil-formally-unramified-regular-maps.d}. By \cite[Theorem~3.21]{DattaOlanderRelFrobIso}, it is enough to show that $F_\varphi$ is an isomorphism. Assuming \autoref{cor:b-nil-formally-unramified-regular-maps.c}, $F_\varphi$ is surjective by \autoref{cor:cases-bnil-formally-unramified-equivalent-Noetherian}. Since $F_\varphi$ is cyclically pure, it is injective.

The final assertion follows because a regular homomorphism of Noetherian $\mathbf{F}_p$-algebras has faithfully flat relative Frobenius by results of Radu \cite{RaduUneClasseDAnneaux} and Andr\'e \cite{AndreHomomorphismsRegulariers}, and a faithfully flat ring map is cyclically pure. 
\end{proof}

One can also strengthen \autoref{cor:b-nil-formally-unramified-over-epi-Frobenius} with Noetherian hypotheses.

\begin{proposition}
    \label{prop:Noetherian-over-epimorphic-Frobenius}
    Let $k$ be a ring of prime characteristic $p > 0$ such that $F_k$ is an epimorphism. Let $R$ be a Noetherian $k$-algebra. Then the following are equivalent:
    \begin{enumerate}
        \item $k \to R$ is b-nil formally unramified.\label{prop:Noetherian-over-epimorphic-Frobenius.a}
        %\item $F_R$ is an epimorphism.
        \item $F_R$ is b-nil formally unramified.\label{prop:Noetherian-over-epimorphic-Frobenius.b}
        \item $F_R$ is an isomorphism, i.e., $R$ is perfect.\label{prop:Noetherian-over-epimorphic-Frobenius.c}
        \item $k \to R$ is formally unramified.\label{prop:Noetherian-over-epimorphic-Frobenius.d}
        \item $\mathbf{F}_p \to R$ is formally unramified.\label{prop:Noetherian-over-epimorphic-Frobenius.e}
        %\item $\mathbf{F}_p \to R$ is b-nil formally unramified.
    \end{enumerate}
\end{proposition}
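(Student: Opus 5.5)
By \autoref{cor:b-nil-formally-unramified-over-epi-Frobenius}, conditions \autoref{prop:Noetherian-over-epimorphic-Frobenius.a} and \autoref{prop:Noetherian-over-epimorphic-Frobenius.b} are each equivalent to $F_R$ being an epimorphism, and to $\mathbf{F}_p \to R$ being b-nil formally unramified. So the work is to connect these to \autoref{prop:Noetherian-over-epimorphic-Frobenius.c}, \autoref{prop:Noetherian-over-epimorphic-Frobenius.d} and \autoref{prop:Noetherian-over-epimorphic-Frobenius.e}. I would organize the argument as the cycle \autoref{prop:Noetherian-over-epimorphic-Frobenius.c} $\implies$ \autoref{prop:Noetherian-over-epimorphic-Frobenius.a} $\implies$ \autoref{prop:Noetherian-over-epimorphic-Frobenius.d} $\implies$ \autoref{prop:Noetherian-over-epimorphic-Frobenius.e} $\implies$ \autoref{prop:Noetherian-over-epimorphic-Frobenius.c}.

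\textbf{The easy arrows.} For \autoref{prop:Noetherian-over-epimorphic-Frobenius.c} $\implies$ \autoref{prop:Noetherian-over-epimorphic-Frobenius.a}: an isomorphism is an epimorphism, so $F_R$ is epic, and \autoref{cor:b-nil-formally-unramified-over-epi-Frobenius} gives \autoref{prop:Noetherian-over-epimorphic-Frobenius.a}. The implication \autoref{prop:Noetherian-over-epimorphic-Frobenius.a} $\implies$ \autoref{prop:Noetherian-over-epimorphic-Frobenius.d} holds by definition, since b-nil formally unramified implies formally unramified.

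\textbf{From \autoref{prop:Noetherian-over-epimorphic-Frobenius.d} to \autoref{prop:Noetherian-over-epimorphic-Frobenius.e}.} Transitivity of formal unramifiedness reduces this to showing that $\mathbf{F}_p \to k$ is formally unramified. This holds because $F_k$, being the relative Frobenius of $\mathbf{F}_p \to k$, is an epimorphism. By \autoref{thm:b-nil-formally-unramified}, $\mathbf{F}_p \to k$ is then b-nil formally unramified, hence formally unramified. Alternatively, the composite $\mathbf{F}_p\to k\to R$ is b-nil formally unramified once we have \autoref{prop:Noetherian-over-epimorphic-Frobenius.a}, but that route is circular here, so I would use transitivity.

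\textbf{From \autoref{prop:Noetherian-over-epimorphic-Frobenius.e} to \autoref{prop:Noetherian-over-epimorphic-Frobenius.c}, the key step.} Apply \autoref{prop:b-nil-formally-unramified-image-relFrobenius-Noetherian} to $\varphi\colon \mathbf{F}_p \to R$. Here $S^{(p)} = R$, $F_\varphi = F_R$, and $\mathbf{F}_p[R^p] = R^p$. The one thing to check is that $R^p$ is Noetherian. The map $F_R$ surjects $R$ onto $R^p$, so $R^p$ is a quotient of the Noetherian ring $R$ and is therefore Noetherian. Since $R$ is Noetherian and formally unramified over $\mathbf{F}_p$, condition \autoref{prop:b-nil-formally-unramified-image-relFrobenius-Noetherian.d} holds, so $F_R$ is surjective. (Equivalently, one could apply \autoref{prop:epimorphism-surjective-Noetherian} directly to the inclusion $R^p \hookrightarrow R$ with $\theta = F_R$.)

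It remains to see that $F_R$ is injective, i.e.\ that $R$ is reduced. This is the step I expect to be the main obstacle. The plan is to use Noetherianity: the nilradical $N$ is finitely generated, so $N^m = 0$ for some $m$. Surjectivity of Frobenius gives $N = N^{[p]}$. Indeed, any $x \in N$ can be written $x = y^p$, and then $y$ is nilpotent, so $y \in N$; hence every element of $N$ is a $p$-th power of an element of $N$. Iterating, $N = N^{[p^e]} \subset N^{p^e}$ for all $e$, and choosing $p^e \geq m$ gives $N = 0$. Thus $F_R$ is injective and surjective, so $R$ is perfect, which is \autoref{prop:Noetherian-over-epimorphic-Frobenius.c}.
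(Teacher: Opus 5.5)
Your proof is correct and uses the same ingredients as the paper's proof. These are \autoref{cor:b-nil-formally-unramified-over-epi-Frobenius}, Andr\'e's surjectivity result \autoref{prop:epimorphism-surjective-Noetherian} (which you reach through \autoref{prop:b-nil-formally-unramified-image-relFrobenius-Noetherian} applied to $\mathbf{F}_p \to R$, after rightly noting that $R^p$ is Noetherian as a quotient of $R$), and the same reducedness argument $N = N^{[p^e]} \subset N^{p^e} = 0$. The only difference is the order of the cycle: the paper proves (b) $\Rightarrow$ (c) and closes with (e) $\Rightarrow$ (a) via \autoref{cor:cases-bnil-formally-unramified-equivalent-Noetherian}, whereas you go straight from (e) to (c), so the Noetherian surjectivity input is needed only once.
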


\begin{proof}
    \autoref{prop:Noetherian-over-epimorphic-Frobenius.a}$\implies$\autoref{prop:Noetherian-over-epimorphic-Frobenius.b} follows by \autoref{cor:b-nil-formally-unramified-over-epi-Frobenius}. 
    
    \autoref{prop:Noetherian-over-epimorphic-Frobenius.b}$\implies$\autoref{prop:Noetherian-over-epimorphic-Frobenius.c}: We give two different arguments to first show that $F_R$ is surjective. Assuming \autoref{prop:Noetherian-over-epimorphic-Frobenius.b}, $F_R$ is formally unramified. Since $R$ is Noetherian, $F_R$ is surjective by \autoref{prop:epimorphism-surjective-Noetherian}. Alternatively, $F_R$ being b-nil formally unramified implies it is an epimorphism by \autoref{cor:b-nil-formally-unramified-over-epi-Frobenius}. An integral epimorphism of Noetherian rings is surjective by \autoref{prop:epic-implies-surjective}~\autoref{prop:epic-implies-surjective.c}. Having established that $F_R$ is surjective, observe that if $I$ is a radical ideal of $R$, then $I = I^{[p^e]}$ for all integers $e > 0$ because a $p^e$-th root of an element of $I$ (which always exists by the surjectivity of $F^e_R$) must be in $I$. Applying this to $\sqrt{(0)}$, we have that because $\sqrt{(0)}$ is finitely generated, it is nilpotent. Hence,
    \[
    \sqrt{(0)} = \sqrt{(0)}^{[p^e]} = (0)
    \] for all $e \gg 0$. Thus, $R$ is reduced and $F_R$ is injective as well.
    
    \autoref{prop:Noetherian-over-epimorphic-Frobenius.c}$\implies$\autoref{prop:Noetherian-over-epimorphic-Frobenius.d}: If $F_R$ is surjective then $F_{R/k}$ is surjective as well, and so, $k \to R$ is b-nil formally unramified by \autoref{thm:b-nil-formally-unramified} and hence formally unramified. 

    \autoref{prop:Noetherian-over-epimorphic-Frobenius.d}$\implies$\autoref{prop:Noetherian-over-epimorphic-Frobenius.e}: Since $F_k$ is an epimorphism, $\mathbf{F}_p \to k$ is b-nil formally unramified by \autoref{cor:b-nil-formally-unramified-over-epi-Frobenius}, and so, the composition $\mathbf{F}_p \to k \to R$ is formally unramified.

    \autoref{prop:Noetherian-over-epimorphic-Frobenius.e}$\implies$\autoref{prop:Noetherian-over-epimorphic-Frobenius.a}: Since $\mathbf{F}_p$ is $F$-finite and $R$ is Noetherian, $\mathbf{F}_p \to R$ is b-nil formally unramified by \autoref{cor:cases-bnil-formally-unramified-equivalent-Noetherian}. But $\mathbf{F}_p \to R = \mathbf{F}_p \to k \to R$, so $k \to R$ is b-nil formally unramified by \autoref{lem:basic-properties-b-nil-formally-unramified}~\autoref{cor:b-nil-formally-unramified-over-epi-Frobenius.a}.
\end{proof}

\begin{examples}
Here are some cases where absolute Frobenius being an epimorphism implies it is surjective.
\begin{enumerate}
    \item If $R$ is $F$-finite and $F_R$ is an epimorphism, then $F_R$ is surjective by \autoref{prop:epic-implies-surjective}~\autoref{prop:epic-implies-surjective.b}.
    
    \item If $R$ is a Noetherian $\mathbf{F}_p$-algebra and $F_R$ is an epimorphism, then $F_R$ is an isomorphism by \autoref{prop:Noetherian-over-epimorphic-Frobenius}. In other words, for Noetherian $R$, $F_R$ being an epimorphism is equivalent to $R$ being isomorphic to a finite product of perfect fields. %of characteristic $p$.

    \item Let $R$ be a normal $\mathbf{F}_p$-algebra, i.e., for all $\frp \in \Spec(R)$, $R_\frp$ is a domain that is integrally closed in $\Frac(R)$. If $F_R$ is an epimorphism, then $R$ is perfect. It is enough to show that $R_\frp$ is perfect. Since $F_R \otimes_R \id_{R_\frp}$ is identified with $F_{R_\frp}$, we see that $F_{R_\frp}$ is also an epimorphism by base change (\autoref{cor:epimorphisms-formally-unramified}). Thus, by passing to $R_\frp$, we immediately reduce to the case where $R$ is a domain that is integrally closed in $\Frac(R)$ and has an epimorphic $F_R$. But epimorphisms induce isomorphisms at the level of residue field extensions; see \autoref{prop:necessary-sufficient-characterization-epimorphism}. Thus, $\Frac(R)$ is perfect. But then any $x \in R$ has a $p$th root in $\Frac(R)$, hence also in $R$.
    %Since $R$ is integrally closed in $\Frac(R)$, $R^p$ is integrally closed in $\Frac(R)^p = \Frac(R)$. But $R^p \hookrightarrow R$ is an integral extension, so we must have $R^p = R$, that is, $R$ is perfect. 
\end{enumerate}
\end{examples}

\begin{example}
    \label{eg:when-is-bnil-formally-unramified-over-perfect-field-reduced}
    Let $k$ be a field of characteristic $p > 0$ and $R$ be a $k$-algebra. It is natural to ask if $k \to R$ being b-nil formally unramified implies $R$ is reduced. We discuss some examples to illustrate that this is not always the case.
    \begin{enumerate}
        \item If $k = \mathbf{F}_p$, then Bhatt constructs an example of a non-reduced semi-perfect $\mathbf{F}_p$-algebra $R$ such that $\mathbf{F}_p \to R$ is formally \'etale \cite{Bhatt_imperfect_trivial_cc} (in fact, his example satisfies the stronger property that the cotangent complex $L_{R/\mathbf{F}_p} \simeq 0$). Note that such an $R$ is necessarily non-Noetherian by \autoref{prop:Noetherian-over-epimorphic-Frobenius}. Since $F_R = F_{R/\mathbf{F}_p}$ is surjective, $\mathbf{F}_p \to R$ is b-nil formally unramified by \autoref{thm:b-nil-formally-unramified}.   
        %is $F$-finite, $\mathbf{F}_p \to R$ is b-nil formally unramified \autoref{thm:F-finite-b-nil-formally-unramified}. Thus, a b-nil formally unramified algebra over a perfect field of characteristic $p$ is not always reduced.

        \item If $k$ is perfect and $R$ is Noetherian, then $k \to R$ being b-nil formally unramified implies $R$ is reduced (in fact, $R$ is perfect) by \autoref{prop:Noetherian-over-epimorphic-Frobenius}.

        \item When $k$ is not perfect, a Noetherian b-nil formally unramified $k$-algebra $R$ may fail to be reduced. Our example comes from \cite[Example~3.2]{MUKHOPADHYAY202161}. Let $t$ be an indeterminate and let $k = \mathbf{F}_p(t)$. Let $k_{\perf}$ be the perfection of $k$ and $R = k_{\perf}[x]/(x^2)$, for an indeterminate $x$. Then $R$ is an $F$-finite Artinian local ring that is not reduced. If $\overline{x}$ is the class of $x$ in $R$, then $R$ becomes a $k$-algebra via the map
        \begin{align*}
            \phi \colon k &\longrightarrow R\\
            f(t) &\longmapsto f(t) + f'(t)\overline{x},
        \end{align*}
        where $f'(t)$ is the formal derivative of $f(t)$. Note that this is not the ``usual'' $k$-algebra structure on $R$. Since the residue field of $R$ is perfect and $R$ is complete, it has a unique coefficient field (namely, just the classes of the constant polynomials in $k_{\perf}[x]$). Note that this example also illustrates that $\im(\phi)$ is not contained in the unique coefficient of $R$. It is shown in \cite[Example~3.2]{MUKHOPADHYAY202161} that $\phi$ is formally unramified, and in fact, that the relative Frobenius of $\phi$ is surjective. Hence $\phi$ is also b-nil formally unramified by \autoref{thm:b-nil-formally-unramified}. %\autoref{cor:cases-bnil-formally-unramified-equivalent-Noetherian}.
    \end{enumerate}
\end{example}

In \autoref{sec:epimorphic-Frobenius-not-surjective}, we construct an $\mathbf{F}_p$-algebra $R$, which is necessarily non-Noetherian, such that $F_R$ is an epimorphism but not surjective.

\section{A Ring with Epimorphic but not Surjective Absolute Frobenius}
\label{sec:epimorphic-Frobenius-not-surjective}

Fix a prime number $p > 0$. Given an $\mathbf{F}_p$-algebra $A$ and elements $a_1, \dots , a_n \in A$, the ring obtained from $A$ by freely adjoining a compatible system of $p$-power roots of $a_1, \dots , a_n$ is
$$
A' = \operatorname{colim}_i A[z_1, \dots , z_n]/\langle z_j^{p^i}-a_j \rangle
$$
where the colimit is over transition maps which are the identity on $A$ and send $z_j \mapsto z_j^p$. The ring $A'$ has the following universal property: Given an $A$-algebra $B$, specifying a homomorphism of $A$-algebras $A' \to B$ is the same as giving elements $b_{jk} \in B$ for $j = 1, \dots , n$ and $k \geq 1$ with the property that $b_{j,1} $ is the image of $a_j$ in $B$ and $b_{j,k+1}^p = b_{j,k}$ for all $k \geq 1$ and $j = 1, \dots , n$.

For $n \geq 2$, let $A_n$ be the ring obtained from the polynomial ring
$$
\mathbf{F}_p[x_1, \dots , x_{2^n-1}]
$$
by freely adjoining compatible systems of $p$-power roots to the elements $x_{i}$ with $i$ even and to the elements $x_ix_{i+1}$ for arbitrary $i \leq 2^n-2$. Thus, for an $\mathbf{F}_p$-algebra $B$, specifying a ring map $A_n \to B$ is the same as giving $2^n-1$ elements $b_1, \dots, b_{2^n - 1} \in B$ together with compatible systems of $p$-power roots for each of the elements
$$
b_1b_2, b_2, b_2b_3, b_3b_4, b_4, b_4b_5, b_5b_6, b_6, \dots, b_{2^n-2},b_{2^n-2}b_{2^n-1}.
$$
Define ring maps $A_n \to A_{n+1}$ such that
$x_i \mapsto x_{2i}$ for $i$ even, and $x_i \mapsto x_{2i-1}x_{2i}x_{2i+1}$ for $i$ odd. That is, 
$$x_1 \mapsto x_1x_2x_3, x_2 \mapsto x_4, x_3 \mapsto x_5x_6x_7, x_4 \mapsto x_8, \cdots  $$
There exist such maps because the product of any 4 consecutive variables $x_ix_{i+1}x_{i+2}x_{i+3}$ in $A_{n+1}$ has a compatible system of $p$-power roots. Define
$$
A = \operatorname{colim}_n A_n.
$$

\begin{claim}
    The absolute Frobenius of $A$ is an epimorphism. 
\end{claim}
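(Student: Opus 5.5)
The plan is to verify the definition directly: take two ring maps $f, g \colon A \to B$ with $f \circ F_A = g \circ F_A$, that is, $f(a)^p = g(a)^p$ for all $a \in A$, and show that $f = g$. Since $A = \operatorname{colim}_n A_n$, every element of $A$ is the image of an element of some $A_n$. Each $A_n$ is generated as an $\mathbf{F}_p$-algebra by the variables $x_1, \dots, x_{2^n-1}$ together with the adjoined compatible systems of $p$-power roots. So it suffices to show that $f$ and $g$ agree on the images in $A$ of all these generators, for every $n$.

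\emph{Step 1: elements with compatible $p$-power roots.} Let $y \in A$ carry a compatible system $y = y_1, y_2, \dots$ with $y_{k+1}^p = y_k$. Then
\[
f(y_k) = f(y_{k+1})^p = g(y_{k+1})^p = g(y_k)
\]
for every $k$. Hence $f$ and $g$ agree on every element of such a system. This covers all the adjoined root generators of $A_n$, the images of $x_i$ with $i$ even, and the images of the products $x_ix_{i+1}$.

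\emph{Step 2: the odd variables.} This is the only real obstacle, because odd-indexed $x_i$ have no adjoined roots in $A_n$. The idea is to pass to $A_{n+1}$, where the image of $x_i$ ($i$ odd) is $x_{2i-1}x_{2i}x_{2i+1}$. Write $u_j = f(x_j)$ and $v_j = g(x_j)$ for the images of the variables of $A_{n+1}$. By Step 1 applied inside $A_{n+1}$, we have $u_{2i} = v_{2i}$ and $u_a u_{a+1} = v_a v_{a+1}$ for all consecutive indices. From $u_{2i}u_{2i+1} = v_{2i}v_{2i+1} = u_{2i}v_{2i+1}$ we get $v_{2i}(u_{2i+1} - v_{2i+1}) = u_{2i}(u_{2i+1}-v_{2i+1}) = 0$. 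Therefore
\[
u_{2i-1}u_{2i}u_{2i+1} = v_{2i-1}v_{2i}u_{2i+1} = v_{2i-1}v_{2i}v_{2i+1} + v_{2i-1}\cdot v_{2i}(u_{2i+1}-v_{2i+1}) = v_{2i-1}v_{2i}v_{2i+1}.
\]
So $f$ and $g$ agree on the image of $x_i$ in $A$.

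\emph{Conclusion.} Since the maps $A_n \to A_{n+1} \to A$ are compatible, Steps 1 and 2 show that $f$ and $g$ agree on a generating set of the image of each $A_n$ in $A$. Hence $f = g$, and $F_A$ is an epimorphism. One detail must be checked in Step 2: the relations $u_a u_{a+1} = v_a v_{a+1}$ require the images in $A$ of the products $x_ax_{a+1}$ of $A_{n+1}$ to have compatible $p$-power roots in $A$. This holds because the roots adjoined in $A_{n+1}$ map to such roots in $A$.
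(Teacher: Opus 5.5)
Your proof is correct and takes essentially the same route as the paper. First you get agreement on every element that admits compatible $p$-power roots: the even variables, the consecutive products, and their adjoined roots. Then you push each odd $x_i$ into $A_{n+1}$ as $x_{2i-1}x_{2i}x_{2i+1}$ and use exactly the relations $u_{2i}=v_{2i}$, $u_{2i}u_{2i+1}=v_{2i}v_{2i+1}$, $u_{2i-1}u_{2i}=v_{2i-1}v_{2i}$. The differences are cosmetic: you work with $f,g$ on $A$ directly rather than with compatible pairs of maps on $A_n$ and $A_{n+1}$, and you finish via $v_{2i}(u_{2i+1}-v_{2i+1})=0$ instead of the paper's direct chain of substitutions.
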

\begin{proof}[Proof of claim]
Suppose $n \geq 2$, $B$ is a ring, and there exist two commutative triangles

\[\begin{tikzcd}
	{A_n} & {} & {A_{n+1}} && {A_n} & {} & {A_{n+1}} \\
	& B &&&& B
	\arrow[from=1-1, to=1-3]
	\arrow["{\varphi_n}"', from=1-1, to=2-2]
	\arrow["{\varphi_{n+1}}", from=1-3, to=2-2]
	\arrow[from=1-5, to=1-7]
	\arrow["{\psi_n}"', from=1-5, to=2-6]
	\arrow["{\psi_{n+1}}", from=1-7, to=2-6]
\end{tikzcd}\]
of rings with the property that $\varphi_i|_{A_i^p} = \psi_i|_{A_i^p}$ for $i = n, n+1$. It is enough to show $\varphi_n = \psi_n$. 

Certainly,
\begin{align*}
\varphi_n\Big(x_{2i}^{1/p^j}\Big) &= \psi_n\Big(x_{2i}^{1/p^j}\Big), \hspace{5 em} i < 2^{n-1}, j \geq 0, \\
\varphi_{n+1}\Big(x_{2i}^{1/p^j}\Big) &= \psi_{n+1}\Big(x_{2i}^{1/p^j}\Big), \hspace{4 em} i < 2^n, j \geq 0,
\end{align*}
as the inputs are $p^{th}$ powers. For the same reason,
\begin{align*}
    \varphi_n\Big((x_ix_{i+1})^{1/p^j}\Big) &= \psi_n\Big((x_ix_{i+1})^{1/p^j}\Big), \hspace{5 em} i < 2^n-1, j \geq 0, \\
    \varphi_{n+1}\Big((x_ix_{i+1})^{1/p^j}\Big) &= \psi_{n+1}\Big((x_ix_{i+1})^{1/p^j}\Big), \hspace{4 em} i < 2^{n+1}-1, j \geq 0.
\end{align*}

To show $\varphi_n = \psi_n$, it therefore suffices to show $\varphi_n(x_i) = \psi_n(x_i)$ for all odd $i$. For such $i$ we have
\begin{align*}
\varphi_n(x_i) &= \varphi_{n+1}(x_{2i-1}x_{2i}x_{2i+1}) = \varphi_{n+1}(x_{2i-1})\varphi_{n+1}(x_{2i}x_{2i+1})\\ 
&= \varphi_{n+1}(x_{2i-1})\psi_{n+1}(x_{2i}x_{2i+1})
= \varphi_{n+1}(x_{2i-1})\psi_{n+1}(x_{2i})\psi_{n+1}(x_{2i+1})\\ 
&= \varphi_{n+1}(x_{2i-1})\varphi_{n+1}(x_{2i})\psi_{n+1}(x_{2i+1}) 
= \varphi_{n+1}(x_{2i-1}x_{2i})\psi_{n+1}(x_{2i+1}) \\
&= \psi_{n+1}(x_{2i-1}x_{2i})\psi_{n+1}(x_{2i+1}) 
= \psi_{n+1}(x_{2i-1}x_{2i}x_{2i+1}) \\
&= \psi_{n}(x_{i});
\end{align*}
as needed. 
\end{proof}
\begin{claim}
    The Frobenius of $A$ is not surjective. 
\end{claim}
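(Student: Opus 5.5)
The plan is to exhibit a specific element of $A$ that is not a $p$-th power, namely the image of $x_1 \in A_2$. We will do this by recognizing $A$ as a monoid algebra and reducing to a combinatorial statement about monoids. First, observe that each $A_n$ is the monoid algebra $\mathbf{F}_p[M_n]$. Here $M_n \subset \mathbf{Z}[1/p]_{\geq 0}^{2^n-1}$ is the submonoid generated by
\[
e_i \ (1 \le i \le 2^n-1), \qquad p^{-j}e_{2i} \ (j \geq 0), \qquad p^{-j}(e_i+e_{i+1}) \ (j \geq 0).
\]
Indeed, $A_n$ is obtained from a polynomial ring by adjoining compatible $p$-power roots of monomials, and the obvious map to $\mathbf{F}_p[M_n]$ is an isomorphism (check on each finite stage of the colimit defining $A_n$). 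The transition maps $A_n \to A_{n+1}$ send monomials to monomials. They are induced by the injective linear maps $e_{2i} \mapsto e_{4i}$ and $e_{i} \mapsto e_{2i-1}+e_{2i}+e_{2i+1}$ ($i$ odd), which restrict to monoid maps $M_n \to M_{n+1}$. Since monoid algebras commute with filtered colimits, $A = \mathbf{F}_p[M]$ with $M = \operatorname{colim}_n M_n$, and $M$ embeds in a torsion-free group.

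Next, we reduce non-surjectivity of $F_A$ to non-divisibility by $p$ in $M$. Suppose $[m] = f^p$ with $f = \sum_k c_k [m_k]$, the $m_k$ distinct. Then $f^p = \sum_k c_k^p [p m_k]$, and the $p m_k$ are still distinct because multiplication by $p$ is injective on a torsion-free group. So $[m]$ is a $p$-th power only if $m = p y$ for some $y \in M$. Such an equality in the colimit is realized at some finite stage $n$. Iterating the transition maps, the image of $e_1 \in M_2$ in $M_n$ is $\sum_{i=1}^{N} e_i$ with $N = 2^{n-1}-1$: for example $x_1 \mapsto x_1x_2x_3 \mapsto x_1\cdots x_7$, because the consecutive blocks glue. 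Note that $N$ is odd and $N < 2^n-1$. So it suffices to show that $y = \frac1p\sum_{i\le N} e_i \notin M_n$.

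This last step is the heart of the argument, and we will do it by a propagation argument along the coordinates. Write $y$ as a nonnegative combination of the generators. In coordinate $1$ the only contributors are $e_1$, with an integer coefficient, and the pair generators $p^{-j}(e_1+e_2)$. Since the target value is $1/p$, the pairs $(1,2)$ must contribute exactly $1/p$ and $e_1$ must not appear. These pairs then already contribute $1/p$ to coordinate $2$, so nothing else may touch coordinate $2$: no $p^{-j}e_2$ and no pair $(2,3)$. Coordinate $3$ is odd, so it has only $e_3$ (integral) and the pairs $(2,3)$, $(3,4)$ as contributors, which forces the pairs $(3,4)$ to contribute $1/p$. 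Inductively, every odd coordinate $i \le N$ forces the pairs $(i,i+1)$ to total $1/p$. At $i=N$ this gives coordinate $N+1$ a positive value, whereas the target has coordinate $N+1$ equal to $0$. This contradiction shows $y \notin M_n$.

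The main obstacle I expect is the bookkeeping in the first step. One must verify that the colimit presentations really give monoid algebras with $M_n$ generated exactly as claimed, with no extra relations, and that the colimit $M$ is well behaved enough for the monomial-comparison argument. The propagation argument itself is elementary once that identification is secured.
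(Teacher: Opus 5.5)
Your first step is false for $n \ge 3$, and the ``check on each finite stage'' fails there. For $n \ge 3$ the ring $A_n$ is not reduced, so it cannot be $\mathbf{F}_p[M_n]$ with $M_n$ inside the torsion-free group $\mathbf{Z}[1/p]^{2^n-1}$.

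For instance, in $A_3$ put $u = x_2^{1/p}(x_3x_4)^{1/p}$ and $v = x_4^{1/p}(x_2x_3)^{1/p}$. Then $u^p = x_2x_3x_4 = v^p$, and both map to the same monomial $[\tfrac1p(e_2+e_3+e_4)]$. Now work in the stage $B_{3,i}$ with $i \ge 1$. It is free over $\mathbf{F}_p[x_1,z_2,x_3,z_4,x_5,z_6,x_7]$ on the monomials $w^e$ with $0 \le e_{j,j+1} < p^i$. In this basis $u = z_2^{p^{i-1}}w_{3,4}^{p^{i-1}}$ and $v = z_4^{p^{i-1}}w_{2,3}^{p^{i-1}}$, which lie on different basis vectors. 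Hence $u - v$ is a nonzero nilpotent of $A_3$. The same basis argument, applied after the transition maps, shows it stays nonzero in $A$, so $A \neq \mathbf{F}_p[M]$ either. A rank count explains why this is systematic: $B_{n,i}$ has rank $p^{i(2^n-2)}$ over the polynomial ring, while the corresponding monoid algebra has generic rank only $p^{i\,2^{n-1}}$.

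The repair is easy, because you never use injectivity. By the universal property of $A_n$ there is a ring map $A_n \to \mathbf{F}_p[M_n]$ sending $x_k \mapsto [e_k]$ and the root systems to $[p^{-j}e_{2i}]$ and $[p^{-j}(e_i+e_{i+1})]$. With the evident choice of roots in the transition maps, these are compatible with your linear maps and give $\pi \colon A \to \mathbf{F}_p[M]$. If the image of $x_1$ were $b^p$ in $A$, then the monomial of the image of $e_1$ would equal $\pi(b)^p$ in the domain $\mathbf{F}_p[M]$. Your second and third steps, which are correct as written, rule this out.

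With this change you have a valid proof, differing from the paper's mainly in where the bookkeeping happens. The paper stays in the non-reduced rings $B_{n,i}$. It expands a putative $p$-th root in the $w^e$-basis, discards the directions with $pe \not\equiv 0 \pmod{p^i}$, and compares monomials in the polynomial ring. Its forced multi-index $(1,0,1,0,\dots,1,0)$ is exactly your propagation. The endpoints differ:
\begin{itemize}
\item the paper uses $x_1x_2x_3$, whose image is the product of all variables, and gets the contradiction from the exponent of $x_{2^n-1}$ not being divisible by $p$;
\item you use $x_1$ and get it from coordinate $N+1$ having to vanish.
\end{itemize}
Passing to the domain $\mathbf{F}_p[M]$ avoids reducing $w^{pe}$ modulo the relations, so it is the cleaner route.
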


The element $x_1x_2x_3 \in A_2$ has image $x_1\cdots x_{2^n-1}$ in $A_n$ for $n \geq 2$. It suffices to show that this element is not a $p^{th}$ power in $A_n$. The ring $A_n$ is the colimit over $i$ of the rings
\begin{align*}
 B_{n, i} &= \dfrac{ \mathbf{F}_p[x_1, \dots , x_{2^n-1}, z_2, z_4, \dots , z_{2^n-2}, w_{1,2}, w_{2,3}, \dots , w_{2^n-2, 2^n-1}]}{\Big\langle z_{2j}^{p^i}-x_{2j}, w_{j, j+1}^{p^i} - x_jx_{j+1} \Big\rangle }  \\
&= \dfrac{ \mathbf{F}_p[x_1, z_2, x_3, \dots , z_{2^n-2}, x_{2^n-1}, w_{1,2}, w_{2,3}, \dots ,  w_{2^n-2, 2^n-1}]}{\Big\langle w_{2j+1, 2j+2}^{p^i} - x_{2j+1}z_{2j+2}^{p^i} , w_{2j, 2j+1}^{p^i} - z_{2j}^{p^i}x_{2j+1}\Big\rangle } ,
\end{align*}
so it suffices to show the element
\begin{equation}
\label{equn-theelement}
x_1 z_2^{p^i}x_3z_4^{p^i} \cdots z_{2^n-2}^{p^i}x_{2^n-1},
\end{equation}
which maps to $x_1\cdots x_{2^n-1}$ in $A_n$, is not a $p^{th}$ power in $B_{n, i}$ for any $i$. The ring $B_{n, i}$ is a free module over the polynomial ring $\mathbf{F}_p[x_1, z_2, \dots , z_{2^n-2}, x_{2^n-1}]$ with a basis consisting of the monomials 
\[
w^e = w_{1,2}^{e_{1,2}} w_{2,3}^{e_{2,3}} \cdots w_{2^n-2, 2^n-1}^{e_{2^n-2, 2^n-1}}
\] where $e = (e_{1,2}, e_{2,3}, \dots )$ is a multi-index such that $0 \leq e_{j, j+1} < p^i$ for all $j$. Let $E$ be the set of all such multi-indices. Suppose the element \autoref{equn-theelement} is a $p^{th}$ power in $B_{n,i}$. Then we can write
$$
x_1 z_2^{p^i}x_3z_4^{p^i} \cdots z_{2^n-2}^{p^i}x_{2^n-1} = \bigg(\sum _{e \in E}f_e w^e\bigg)^p = \sum_{e \in E} f_e^p w^{pe}
$$
with $f_e \in \mathbf{F}_p[x_1, z_2, \dots , z_{2^n-2}, x_{2^n-1}]$. Since the left hand side has no terms involving $w_{j,j
+1}$, which corresponds on the right hand side to those multi-indices $e$ such that $pe_{j,j+1} \equiv 0 \pmod {p^i}$, we get 
%comparing coefficients of the basis element $1 = w_{1,2}^0w_{2,3}^0 \cdots w_{2^n-2, 2^n-1}^0$ on both sides gives
\[
x_1 z_2^{p^i}x_3z_4^{p^i} \cdots z_{2^n-2}^{p^i}x_{2^n-1} = \sum_{e \in E'}f_e^p w^{pe}
\]
where $E' \subset E$ is the set of multi-indices $e$ with $pe_{j, j+1} \equiv 0 \pmod {p^i}$ for all $j$, or equivalently, such that there exists a unique multi-index $k = (k_{1,2}, k_{2,3}, \dots )$ with $0 \leq k_{j, j+1} < p$ and $p^{i-1} k_{j, j+1} = e_{j, j+1}$ for all $j \leq 2^n-2$. Thus, we can write 
$$
x_1 z_2^{p^i}x_3z_4^{p^i} \cdots z_{2^n-2}^{p^i}x_{2^n-1} = \sum_{k \in K}f_{p^{i-1}k}^p w^{p^ik}
$$
where $K$ is the set of multi-indices $k$ with $0 \leq k_{j, j+1} < p$ for all $j$. By the presentation of $B_{n,i}$, this gives
\begin{align*}
&x_1 z_2^{p^i}x_3z_4^{p^i} \cdots z_{2^n-2}^{p^i}x_{2^n-1} = \sum_{k \in K}f_{p^{i-1}k}^p w^{p^ik}\\  %\sum_{k \in K}f_{p^{i-1}k}^p (x_1x_2)^{k_{1,2}} (x_2x_3)^{k_{2,3}} \cdots( x_{2^n-2} x_{2^n-1})^{k_{2^n-2, 2^n-1}} \\
&= \sum _{k \in K} f_{p^{i-1}k}^p x_1^{k_{1,2}}z_2^{p^i(k_{1,2}+k_{2,3})}x_3^{k_{2,3}+k_{3,4}}z_4^{p^i(k_{3,4}+k_{4,5})} \cdots z_{2^n-2}^{p^i(k_{2^n-3, 2^n-2} + k_{2^n-2,2^n-1})}  x_{2^n-1}^{k_{2^n-2, 2^n-1}}.
\end{align*}
We can regard this last equality as one in the ring $\mathbf{F}_p[x_1,z_2,\dots,z_{2^n-2},x_{2^n-1}]$. Comparing the monomials occurring on both sides, we see that the only terms on the right hand side that contribute to the sum for a multi-index $k \in K$ must have $k_{1,2} = 1$, which further implies
$$k_{2,3} = 0, k_{3,4} = 1, k_{4,5} = 0, k_{5, 6} = 1, \dots , k_{2^n-3, 2^n-2} = 1, k_{2^n-2, 2^n-1} = 0.$$
That is, letting $k^0 = (k^0_{1,2}, k^0_{2,3}, \dots )$ be the multi-index $(1, 0, 1, 0, \dots , 1, 0)$, we must have
\begin{equation}
\label{equn-comparemonomials}
x_1 z_2^{p^i}x_3z_2^{p^i} \cdots z_{2^n-2}^{p^i}x_{2^n-1} %= f_{p^{i-1}k^0}w^{p^ik^0} =
=f_{p^{i-1}k^0}^p x_1z_2^{p^i}x_3z_4^{p^i} \cdots  z_{2^n-2}^{p^i}x_{2^n-1}^{0}.
\end{equation}
However, this equation has no solutions with $f_{p^{i-1}k^0} \in \mathbf{F}_p[x_1, z_2, \dots , z_{2^n-2}, x_{2^n-1}]$ as any monomial occurring in the right hand side of \autoref{equn-comparemonomials} has $x_{2^n-1}$ raised to a power divisible by $p$. This is a contradiction.

\begin{remark}
    The above example settles a question raised by Matthieu Romagny \cite{MOFrobeniusepisurjective} on Mathoverflow. 
\end{remark}

\begin{proposition}
    \label{prop:b-nil-formally-unramified-non-surjective-relative-Frobenius}
    There exists a ring map $\mathbf{F}_p \to A$ that is b-nil formally unramified, but such that the relative Frobenius $F_{A/\mathbf{F}_p}$ is not surjective.
\end{proposition}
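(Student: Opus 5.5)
Take $A$ to be the ring $\operatorname{colim}_n A_n$ constructed above. The two claims just proved do essentially all of the work, so the plan is short. We will check two things. First, the relative Frobenius $F_{A/\mathbf{F}_p}$ of the structure map $\mathbf{F}_p \to A$ can be identified with the absolute Frobenius $F_A$. Second, this identification lets us transport both claims to the relative setting.

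\textbf{Identification.} Since $F_{\mathbf{F}_p}$ is the identity of $\mathbf{F}_p$, we have $F_*\mathbf{F}_p = \mathbf{F}_p$. Hence $A^{(p)} = F_*\mathbf{F}_p \otimes_{\mathbf{F}_p} A \cong A$, and under this isomorphism the map $F_{\mathbf{F}_p} \otimes \id_A$ becomes $\id_A$. The defining diagram of the relative Frobenius then forces $F_{A/\mathbf{F}_p}$ to coincide with $F_A$. This is the same identification already used in the proof of \autoref{cor:b-nil-formally-unramified-over-epi-Frobenius} and stated in the introduction.

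\textbf{Conclusion.} By the first claim, $F_A$ is an epimorphism, so $F_{A/\mathbf{F}_p}$ is an epimorphism. The implication \autoref{thm:b-nil-formally-unramified.c}$\Rightarrow$\autoref{thm:b-nil-formally-unramified.a} of \autoref{thm:b-nil-formally-unramified} then shows that $\mathbf{F}_p \to A$ is b-nil formally unramified. By the second claim, $F_A$ is not surjective, so $F_{A/\mathbf{F}_p}$ is not surjective either. The two claims together give the proposition.

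\textbf{Main obstacle.} There is no real obstacle left, since the substantive work sits in the two claims. The step that most deserves checking is the first claim's reduction to the finite stages. A map out of the colimit $A = \operatorname{colim}_n A_n$ is determined by its restrictions to the $A_n$. In addition, two maps $A \to B$ that agree on $A^p$ restrict, at each stage $n$, to maps agreeing on the image of $A_n^p$. This gives exactly the hypothesis used in that claim's argument. For the second claim, non-surjectivity at the colimit level follows because a $p$-th root of $x_1x_2x_3 \in A$ would already exist in some $A_n$, which is what the monomial argument excludes.
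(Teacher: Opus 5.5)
Your proof is correct and matches the paper's: take the colimit ring $A$, identify $F_{A/\mathbf{F}_p}$ with $F_A$, and then invoke the two claims. The only cosmetic difference is that the paper deduces b-nil formal unramifiedness via \autoref{cor:b-nil-formally-unramified-over-epi-Frobenius} (with $k = \mathbf{F}_p$), whereas you apply \autoref{thm:b-nil-formally-unramified} directly, which is exactly how that corollary is proved.
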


\begin{proof}
    Take $A$ to be the example above; that is, such that $F_A$ is an epimorphism but not surjective. Then $\mathbf{F}_p \to A$ is b-nil formally unramified by \autoref{cor:b-nil-formally-unramified-over-epi-Frobenius}, but $F_{A/\mathbf{F}_p}$, which can be identified with $F_A$, is not surjective.
\end{proof}

\section{A Formally Unramified Map that is not B-nil Formally Unramified}
\label{sec:formally-unramified-not-bnil}

In this section, we show that there is a formally unramified ring map in positive prime characteristic that is not b-nil-formally unramified. The example is a very minor modification of an example by Gabber of a $\mathbf{Q}$-algebra that is formally unramified but not reduced, which we learned about from the paper by Mukhopadhyay–Smith \cite{MUKHOPADHYAY202161}.

 Let $p> 0$ be a prime number and $k$ be a field of characteristic $p$. Consider the category $\mathcal{C}$ of local $k$-algebras $(A,\mathfrak{m},k)$ such that $\mathfrak{m}^{[p]}=0$. Alternatively, this is the category of $k$-algebras $A$ with an ideal $I \subset A$ such that: 
 \begin{enumerate}[(1)]
 \item $k \to A/I$ is an isomorphism, and
 \item $I^{[p]} = 0$.
 \end{enumerate}
In particular, any $k$-algebra homomorphism between two such $k$-algebras is local. Note also that if $(A,\m,k) \in \mathcal{C}$, then $k \hookrightarrow A$ is an integral extension because $A^p = k^p \subset k \subset A$. Indeed, if $a \in A$, then we can write $a = u + x$ where $u \in k$ and $x \in \m$. Then $a^p = u^p + x^p = u^p \in k^p$.

\begin{lemma}
    The category $\mathcal{C}$ is closed under tensor products, filtered colimits, and non-zero quotients in the category of $k$-algebras. \qed
\end{lemma}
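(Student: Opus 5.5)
We work with the second description of $\mathcal{C}$: pairs $(A,I)$ with $k \xrightarrow{\sim} A/I$ and $I^{[p]}=0$. One remark is used repeatedly. Since every element of $A$ has the form $a = u + x$ with $u \in k$ and $x \in I$, we get $a^p = u^p + x^p = u^p$, because $x^p \in I^{[p]} = 0$. So $A^p \subset k$. Conversely, if $I$ is an ideal with $A = k + I$ and $x^p = 0$ for all $x \in I$, then $I^{[p]} = 0$, since $I^{[p]}$ is generated by the elements $x^p$. Also, $I$ is the unique maximal ideal, because it consists of nilpotents and has quotient a field. Hence the whole job in each case is to exhibit the ideal and check these two conditions.

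\emph{Tensor products.} Take $(A,\mathfrak{m})$ and $(B,\mathfrak{n})$ in $\mathcal{C}$ and set $C = A \otimes_k B$. Let $J$ be the kernel of the surjection $C \to A/\mathfrak{m} \otimes_k B/\mathfrak{n} \cong k \otimes_k k = k$. Then $J = \mathfrak{m} \otimes B + A \otimes \mathfrak{n}$, and $C/J \cong k$. For the vanishing of $p$-th powers, note that $J$ is generated additively by elements $m \otimes b$ and $a \otimes n$. Since $C$ has characteristic $p$, the $p$-th power of a sum is the sum of the $p$-th powers. Each generator satisfies $(m \otimes b)^p = m^p \otimes b^p = 0$, and similarly for $a \otimes n$. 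So $x^p = 0$ for every $x \in J$, and $J^{[p]} = 0$. The same argument handles finite tensor products, and arbitrary ones follow as filtered colimits of finite ones, using the next case.

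\emph{Filtered colimits.} Let $A = \operatorname{colim} A_\lambda$ with transition maps that are $k$-algebra maps; these are automatically local. The maximal ideals are compatible and give $I = \operatorname{colim} \mathfrak{m}_\lambda$. Exactness of filtered colimits gives $A/I \cong \operatorname{colim} k = k$, where we use that the index category is filtered and nonempty. Every $x \in I$ is the image of some $x_\lambda \in \mathfrak{m}_\lambda$, so $x^p$ is the image of $x_\lambda^p = 0$. Hence $I^{[p]} = 0$.

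\emph{Non-zero quotients.} Let $J \neq A$ be an ideal. Since $\mathfrak{m}$ is the unique maximal ideal, $J \subset \mathfrak{m}$. Then $A/J$ has the ideal $\mathfrak{m}/J$ with quotient $k$, and $p$-th powers of its elements vanish. Nonzero-ness is exactly what ensures the residue field stays $k$.

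The only step needing any care is the tensor product case. There one must identify the kernel $J$ and use additivity of Frobenius to pass from generators to arbitrary elements; this is straightforward.
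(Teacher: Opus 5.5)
Your proof is correct. The paper states this lemma without proof and treats it as routine. Your argument is exactly that routine check: exhibit the ideal in each case, and use additivity of Frobenius to pass from additive generators such as $m\otimes b$ and $a\otimes n$ to arbitrary elements.

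Your preliminary reduction to the two conditions $A = k + I$ and $x^p = 0$ on $I$ omits one thing: it does not by itself exclude the zero ring. You need $A/I \neq 0$ so that $k \to A/I$ is injective. You cover this anyway, because in each case you compute the quotient to be $k$ directly.
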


\begin{lemma}
\label{lemma-epicriterion}
    If $A \in \mathcal{C}$, the relative Frobenius $A^{(p)} \to A$ of $k \to A$ is an epimorphism of rings if and only if the inclusion $k \to A$ is bijective. 
\end{lemma}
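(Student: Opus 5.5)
The plan is to show that the image of the relative Frobenius is as small as possible, namely just $k$, and then to apply the results on epimorphisms from \autoref{sec:epimorphisms}.

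\emph{The easy direction.} Suppose $k \to A$ is bijective. Then $A = k$, and the relative Frobenius of $\id_k$ is identified with $F_*k \otimes_k k \xrightarrow{\sim} F_*k$. This is an isomorphism, hence an epimorphism.

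\emph{The converse.} The first step is to compute the image $k[A^p]$ of $F_{A/k} \colon F_*k \otimes_k A \to A$. By definition $F_{A/k}(\lambda \otimes a) = \lambda a^p$. As observed right before the lemma, every $a \in A$ can be written as $a = u + x$ with $u \in k$ and $x \in \m$. Since $\m^{[p]} = 0$, we get $a^p = u^p \in k$. Hence $\lambda a^p \in k$ for all $\lambda \in k$ and $a \in A$, so $\im(F_{A/k}) = k$, viewed inside $A$ via the structure map. Thus $F_{A/k}$ factors as
\[
A^{(p)} \twoheadrightarrow k \hookrightarrow A .
\]

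Now assume $F_{A/k}$ is an epimorphism. By \autoref{lem:epimorphism-extension-reduction}, the inclusion $\im(F_{A/k}) = k \hookrightarrow A$ is an epimorphism. Since $A/\m \cong k$, the ring $A$ is nonzero. Therefore \autoref{cor:epimorphisms-from-fields} shows that $k \to A$ is an isomorphism, i.e., bijective.

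The only point that needs care is the identification of the image: one must check that the $k$ appearing as $\im(F_{A/k})$ is exactly the structure copy of $k$ in $A$. This holds because $F_{A/k}$ restricted to $F_*k$ is the structure map $k \to A$, and the elements $a^p$ already lie in that copy of $k$. Everything after that step is a direct citation of the earlier lemmas.
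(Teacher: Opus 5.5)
Your proposal is correct and follows essentially the same route as the paper: you identify $\im(F_{A/k}) = k[A^p] = k$ using $A^p = k^p$, reduce via \autoref{lem:epimorphism-extension-reduction} to the inclusion $k \hookrightarrow A$ being an epimorphism, and conclude with \autoref{cor:epimorphisms-from-fields}. The only difference is that you treat the easy direction directly rather than through the same chain of equivalences, which is harmless.
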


\begin{proof}
    The map $F_{A/k} : A^{(p)} \to A$ is an epimorphism if and only if the inclusion of its image $k[A^p] \hookrightarrow A$ is an epimorphism by \autoref{lem:epimorphism-extension-reduction}. We have $A^p = k^p \subset A$ hence $k[A^p] = k$ as a subset of $A$, so this is equivalent to the inclusion $k \to A$ being an epimorphism. Now we conclude by \autoref{cor:epimorphisms-from-fields}.
\end{proof}

Let us assume $k$ is a field of characteristic $p \geq 7$. Let $A_0 \coloneqq  k\llbracket x,y \rrbracket/\langle\frac{\partial f}{\partial x}, \frac{\partial f}{\partial y}\rangle$ where $f(x,y) = x^2y^2 +x^5+y^5$. Then:

\begin{enumerate}
    \item $A_0$ is a finite local $k$-algebra with residue field $k$ and maximal ideal $\mathfrak{m} = \langle x, y \rangle $.
    \item $f^2 = 0$ in $A_0$ but $f \neq 0$ in $A_0$.
    \item $\mathfrak{m}^ {[p]}=0$%$F(\mathfrak{m}_{A_0})A_0 = 0$. 
    \item In particular, $A_0 \in \mathcal{C}$.
\end{enumerate}

The proof of the first two items is given in \cite[Lemma 2.4]{MUKHOPADHYAY202161} (the same proof works in any characteristic $\geq 7$). Let us prove (c). It suffices to show $x^7 = 0 = y^7$. By symmetry, it is enough to show $y^7 = 0$. In the ring $A_0$, we have:
$$
\partial f/\partial x = 2xy^2 + 5x^4= 0
$$
so $xy^2 = ax^4$ where $a = -5/2$. By symmetry, also $yx^2 = ay^4$. Next, we have
$$
xy^3 = (xy^2)y = ax^4y = ax^2(yx^2) = (ax^2) (ay^4) = a^2(x^2y)y^3 = a^3y^7.
$$
Then
$$
y^4x^2 = (a^{-1}yx^2)x^2 = a^{-1}x^4y = a^{-1}(a^{-1}xy^2)y = a^{-2}xy^3 = ay^7.
$$
Also,
$$
x^2y^3 = x(xy^3) = a^3xy^7 = a^3(xy^3)y^4 = a^6y^{11},
$$
so combining gives
$$
y^7 = a^{-1}y^4x^2 = a^{-1}(x^2y^3)y = a^5y^{12}.
$$
Thus $y^7 = a^5y^{12}$ or $y^7(1-a^5y^5) = 0$, but $1 - a^5y^5 \in A_0^\times$, hence $y^7= 0$ as needed. 

Compare the following to \cite[Lemma 2.5]{MUKHOPADHYAY202161}.

\begin{lemma}
    Continue to assume $p \geq 7$ and let $(A,\mathfrak{m}) \in \mathcal{C}$ be finite dimensional. Then there is an injective morphism $A \hookrightarrow A'$ in $\mathcal{C}$ such that:
    \begin{enumerate}
        \item $A'$ is finite-dimensional.
        \item $\Omega_{A/k} \to \Omega_{A'/k}$ is zero.
    \end{enumerate}
\end{lemma}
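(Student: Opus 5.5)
The plan is to imitate Gabber's construction, as in \cite[Lemma~2.5]{MUKHOPADHYAY202161}, with the ring $A_0$ above as the basic building block. Only two properties of $A_0$ will be used: it lies in $\mathcal{C}$, and its element $f$ satisfies $f \neq 0$, $f^2 = 0$ and $df = 0$ in $\Omega_{A_0/k}$. The last holds because $df = \frac{\partial f}{\partial x}dx + \frac{\partial f}{\partial y}dy$ and both partials vanish in $A_0$. So $f$ is a non-zero square-zero element whose differential is zero, and the strategy is to identify square-zero elements of $A$ with copies of $f$.

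\textbf{Basic step.} Let $a \in \mathfrak{m}$ with $a^2 = 0$. Put
\[
A_a \coloneqq (A \otimes_k A_0)/\langle a \otimes 1 - 1 \otimes f \rangle .
\]
Then $A_a$ is finite dimensional, and it is a non-zero quotient of an object of $\mathcal{C}$, so it lies in $\mathcal{C}$ by the closure lemma. In $\Omega_{A_a/k}$ we get $da = d(1 \otimes f) = 0$. The real content of this step is that $A \to A_a$ is injective. I would check this by comparing $A_a$ with the quotient of $A \otimes_k k[t]/(t^2)$ by $\langle a - t\rangle$, which is $A$ itself, and using that $k[t]/(t^2) \to A_0$, $t \mapsto f$, is injective. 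If this flatness-type argument is awkward, I would instead choose $a$ in the socle of $A$ (so $a\mathfrak{m}=0$) and replace $f$ by a non-zero multiple $g \in \operatorname{soc}(A_0)\cap fA_0$. Here $dg$ is still zero up to multiples of $f$; I would check this directly, using $f\,dg' = d(fg')$. Then $A \otimes_k A_0$ is a free $A_0$-module, and injectivity becomes an explicit linear-algebra computation with a $k$-basis of $A$ adapted to the socle.

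\textbf{Reduction to square-zero elements.} The module $\Omega_{A/k}$ is generated by $da$ for $a$ ranging over a $k$-basis of $\mathfrak{m}$. I would choose this basis adapted to the filtration $\mathfrak{m} \supset \mathfrak{m}^2 \supset \cdots \supset \mathfrak{m}^N = 0$ and argue by descending induction on the level. Elements of $\mathfrak{m}^{\lceil N/2\rceil}$ are square-zero, so the basic step applies to them directly. For an element $a$ at a lower level, I would first pass to a larger algebra in which $a$ is rewritten, for instance using $\mathfrak{m}^{[p]}=0$ or by adjoining a further copy of $A_0$ with $a \mapsto a \otimes 1 + 1\otimes x$, in a way that makes the relevant differential a combination of differentials of elements of higher level. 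The key point is that maps in $\mathcal{C}$ are local, so the image of $\mathfrak{m}^j$ lands in $\mathfrak{m}'^{\,j}$, and the induction can proceed. Finally, I would take $A'$ to be the tensor product over $A$ of the finitely many one-step extensions (a pushout in $\mathcal{C}$). It remains finite dimensional, and the map $\Omega_{A/k}\to\Omega_{A'/k}$ kills every basis differential, hence is zero.

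\textbf{Main obstacle.} I expect two points to be hardest. The first is injectivity of $A \to A_a$, which is exactly where $f \neq 0$ is needed. The second is the reduction from arbitrary elements of $\mathfrak{m}$ to square-zero ones. For the second, I would first try to follow the inductive scheme of \cite[Lemma~2.5]{MUKHOPADHYAY202161} verbatim, checking at each stage that the characteristic $p$ condition $\mathfrak{m}^{[p]}=0$ is preserved. That condition is automatic here, since everything happens inside the closed category $\mathcal{C}$.
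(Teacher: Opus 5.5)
\textbf{Basic step.} Your square-zero step is the case $t=2$ of the paper's construction, and it works. However, it does not work for the reason you give. Injectivity of $k[\epsilon]/(\epsilon^2)\to A_0$ is not preserved by $A\otimes_{k[\epsilon]/(\epsilon^2)}-$ in general, and $A_0$ is not flat over $k[\epsilon]/(\epsilon^2)$, so a flatness argument is not available. What you need is that $k[\epsilon]/(\epsilon^t)$ is self-injective (Gorenstein of dimension $0$). Then an injection $k[\epsilon]/(\epsilon^t)\hookrightarrow B$ splits as modules, and tensoring with $A$ makes $A\to A\otimes_{k[\epsilon]/(\epsilon^t)}B$ split injective. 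This last ring is exactly your quotient $(A\otimes_k B)/\langle x\otimes 1-1\otimes g\rangle$. The same $A$-linear splitting is what justifies injectivity of your final tensor product over $A$ of several extensions. Alternatively, compose the extensions one at a time, as the paper does.

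\textbf{The genuine gap} is the reduction from an arbitrary $x\in\mathfrak{m}$ to square-zero elements.
\begin{enumerate}
\item The assignment $a\mapsto a\otimes 1+1\otimes x$ does not define a ring map on $A$.
\item Locality of morphisms ($\mathfrak{m}^j\mapsto\mathfrak{m}'^{\,j}$) points the wrong way: killing differentials of deeper elements does not kill $dx$. For example, if $A=k[x]/(x^3)$ then $\Omega_{A/k}=(A/x^2A)\,dx$, and killing $d(x^2)=2x\,dx$ leaves $dx\neq 0$.
\end{enumerate}

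\textbf{The missing idea.} Treat $x$ directly according to its nilpotency index $t$.
\begin{enumerate}
\item Since $\mathfrak{m}^{[p]}=0$, we have $x^p=0$, so $t\le p$.
\item Set $B_t=A_0^{\otimes(t-1)}$ and $g=\sum_i 1\otimes\cdots\otimes f\otimes\cdots\otimes 1$. Then $dg=0$, and $g^t=0$ because $f^2=0$.
\item Moreover $g^{t-1}=(t-1)!\,f\otimes\cdots\otimes f\neq 0$, because $(t-1)!$ is a unit when $t\le p$.
\item Hence $\epsilon\mapsto x$ and $\epsilon\mapsto g$ embed $k[\epsilon]/(\epsilon^t)$ into both $A$ and $B_t$.
\item The splitting argument above then gives $A\hookrightarrow A'=(A\otimes_k B_t)/(x\otimes 1-1\otimes g)$, with $dx=dg=0$ in $\Omega_{A'/k}$.
\item Iterating over a $k$-basis of $\mathfrak{m}$ finishes the proof.
\end{enumerate}
So $\mathfrak{m}^{[p]}=0$ is not merely a condition to be preserved inside $\mathcal{C}$. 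It is exactly what lets $g$ have the same nilpotency index as $x$. If $t>p$, then $g^{t-1}=0$ and the construction collapses.

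\textbf{Keeping your two-stage structure.} If you prefer that structure, the correct first stage is to rewrite $x$ as a sum of $t-1$ square-zero elements. Embed $A$ into $A\otimes_{k[\epsilon]/(\epsilon^t)}k[u_1,\dots,u_{t-1}]/(u_1^2,\dots,u_{t-1}^2)$ via $\epsilon\mapsto\sum_i u_i$. Then apply your basic step to each $u_i$. This again needs $(t-1)!\neq 0$. The paper merges both stages by using copies of $f$ in place of the $u_i$.
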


\begin{proof}
    It suffices to show that given any element $x \in \mathfrak{m}$, there is an injective morphism $A \hookrightarrow A'$ satisfying (a) and $dx = 0$ in $A'$. Indeed, letting $x_1, \dots , x_e \in \mathfrak{m}$ be a $k$-basis, we can solve the problem first for $x_1$, then for $x_2 \in A'$, and so forth, and the composition \[
    A \to A' \to A'' \to \cdots \to A ^{(e)}
    \] works because $dx = 0$ in $\Omega_{A^{(e)}/k}$ for every $x \in \mathfrak{m}_A$, and thus also for every element of $A$ since every such element can be written as $u +x$ where $u \in k$ and $x \in \mathfrak{m}_A$.

    Thus, let $x \in \mathfrak{m}$. Let $t$ be the least integer such that $x^t = 0$. Note that $t \leq p$. Set \[
    B_t \coloneqq A_0 \otimes _k A_0 \otimes_k \cdots \otimes_k A_0 = A_0^{\otimes(t-1)},
    \] and set 
    \[
    g \coloneqq f \otimes 1 \otimes \cdots \otimes 1 + 1 \otimes f \otimes 1 \otimes \cdots \otimes 1 + \cdots + 1 \otimes \cdots \otimes 1 \otimes f,
    \]
    where $f \in A_0$ is the element specified above. Define: 
    \[
    A' = (A \otimes _k B_t)/ (x \otimes 1 - 1 \otimes g).
    \] Then $A' \in \mathcal{C}$ and $A'$ is finite dimensional. 
    Also, 
    \[
    d(x \otimes 1) = d(1 \otimes g) = \sum d(1 \otimes 1 \otimes \cdots \otimes f \otimes \cdots \otimes 1) = 0,
    \] as $df = 0$ in $\Omega_{A/k}$. Finally, the map $A' \to A$ is injective. Note that in the ring $B_t$, since $f^2 = 0$, we have $g^t = 0$ but $g^{t-1} = (t-1)! f \otimes f \otimes \cdots \otimes f \neq 0$ since $t \leq p$. Then as in \cite{MUKHOPADHYAY202161}, write $A_t = k[\epsilon]/(\epsilon^t)$, and we may view $A$ and $B_t$ as $A_t$-algebras via $\epsilon \mapsto x$ and $\epsilon \mapsto g$, and these are both injective homomorphisms. The natural map
    $$
    A \otimes _k B_t \to A \otimes _{A_t} B_t
    $$
    factors through $A'$, and so it suffices to show the composition $A \to A \otimes _k B_t \to A \otimes _{A_t} B_t$ is injective. Now  $A_t$ is injective as an $A_t$-module ($A_t$ being Gorenstein local of dimension $0$), so $A_t \to B_t$ splits in the category of $A_t$-modules. Tensoring over $A_t$ with $A$ shows that $A \to A \otimes _{A_t} B_t$ is split injective as well, completing the proof. 
\end{proof}

Let $A_0$ be as above. 
Set $A_1 = (A_0)', A_2 = (A_1)'$,
and so forth. We get a chain of injective maps
$$
A_0 \to A_1 \to \cdots 
$$
in $\mathcal{C}$. Let $A \in \mathcal{C}$ be the colimit. Then we see that $\Omega_{A/k} = \operatorname{colim}_i \Omega_{A_i/k} = 0$ since the transition maps are zero. However, $k \subsetneq A$ as $k \subsetneq A_0 \subset  A$. It follows from \autoref{lemma-epicriterion} that the relative Frobenius of $k \to A$ is not an epimorphism, so $k \to A$ is not b-nil-formally unramified. Since $k \to A$ is an integral extension, $A$ has to be non-Noetherian (equivalently, non-Artinian) by \autoref{cor:integral-epics-to-dim0-Noetherian}.

\section{Acknowledgements}

The third author is grateful to Matthew Morrow, Benjamin Antieau, and other participants of the 2019 Arizona Winter School for many interesting conversations about lifting properties.

\bibliographystyle{alpha}
\bibliography{main,preprints}

@preamble{"\def\cfudot#1{\ifmmode\setbox7\hbox{$\accent"5E#1$}\else \setbox7\hbox{\accent"5E#1}\penalty 10000\relax\fi\raise 1\ht7 \hbox{\raise.1ex\hbox to 1\wd7{\hss.\hss}}\penalty 10000 \hskip-1\wd7\penalty 10000\box7} "}

@book{RaynaudHenselian,
    AUTHOR = {Raynaud, Michel},
     TITLE = {Anneaux locaux hens\'eliens},
    SERIES = {Lecture Notes in Mathematics},
    VOLUME = {Vol. 169},
 PUBLISHER = {Springer-Verlag, Berlin-New York},
      YEAR = {1970},
     PAGES = {v+129},
   MRCLASS = {13.95},
  MRNUMBER = {277519},
MRREVIEWER = {J.-P.\ Lafon},
}

@article {AndreHomologyFrobenius,
    AUTHOR = {Andr\'e, M.},
     TITLE = {Homologie de {F}robenius},
   JOURNAL = {Math. Ann.},
  FJOURNAL = {Mathematische Annalen},
    VOLUME = {290},
      YEAR = {1991},
    NUMBER = {1},
     PAGES = {129--181},
      ISSN = {0025-5831,1432-1807},
   MRCLASS = {13D03 (13N05 18G99)},
  MRNUMBER = {1107666},
MRREVIEWER = {Luchezar\ L.\ Avramov},
       DOI = {10.1007/BF01459241},
       URL = {https://doi.org/10.1007/BF01459241},
}

@article {HashimotoFfinite,
    AUTHOR = {Hashimoto, Mitsuyasu},
     TITLE = {{$F$}-finiteness of homomorphisms and its descent},
   JOURNAL = {Osaka J. Math.},
  FJOURNAL = {Osaka Journal of Mathematics},
    VOLUME = {52},
      YEAR = {2015},
    NUMBER = {1},
     PAGES = {205--213},
      ISSN = {0030-6126},
   MRCLASS = {13A35 (13E15 13F40)},
  MRNUMBER = {3326608},
MRREVIEWER = {Reza\ Naghipour},
       URL = {http://projecteuclid.org/euclid.ojm/1427202878},
}

@book {SamuelSeminar,
     TITLE = {S\'eminaire d'{A}lg\`ebre {C}ommutative dirig\'e{} par
              {P}ierre {S}amuel: 1967/1968. {L}es \'epimorphismes d'anneaux},
 PUBLISHER = {Secr\'etariat math\'ematique, Paris},
      YEAR = {1968},
     PAGES = {ii+111 pp. (not consecutively paged)},
   MRCLASS = {13.00},
  MRNUMBER = {245561},
MRREVIEWER = {M.\ Nagata},
}

@article {EGAIV_I,
    AUTHOR = {Grothendieck, A.},
     TITLE = {\'{E}l\'ements de g\'eom\'etrie alg\'ebrique. {IV}. \'{E}tude
              locale des sch\'emas et des morphismes de sch\'emas. {I}},
   JOURNAL = {Inst. Hautes \'Etudes Sci. Publ. Math.},
  FJOURNAL = {Institut des Hautes \'Etudes Scientifiques. Publications
              Math\'ematiques},
    NUMBER = {20},
      YEAR = {1964},
     PAGES = {259},
      ISSN = {0073-8301,1618-1913},
   MRCLASS = {14.05 (13.10)},
  MRNUMBER = {173675},
MRREVIEWER = {H.\ Hironaka},
       URL = {http://www.numdam.org/item/PMIHES_1964__20__5_0},
}

@article{DumitrescuReduceness,
  author =        {Dumitrescu, Tiberiu},
  journal =       {Comm. Algebra},
  number =        {5},
  pages =         {1787--1795},
  title =         {Reducedness, formal smoothness and approximation in
                   characteristic {$p$}},
  volume =        {23},
  year =          {1995},
  doi =           {10.1080/00927879508825309},
  issn =          {0092-7872},
  url =           {https://doi-org.proxy2.cl.msu.edu/10.1080/
                  00927879508825309},
}

@book{SGA5,
     TITLE = {Cohomologie {$l$}-adique et fonctions {$L$}},
    SERIES = {Lecture Notes in Mathematics},
    VOLUME = {Vol. 589},
      NOTE = {S\'eminaire de G\'eometrie Alg\'ebrique du Bois-Marie
              1965--1966 (SGA 5),
              Edit\'e{} par Luc Illusie},
 PUBLISHER = {Springer-Verlag, Berlin-New York},
      YEAR = {1977},
     PAGES = {xii+484},
      ISBN = {3-540-08248-4},
   MRCLASS = {14F20},
  MRNUMBER = {491704},
MRREVIEWER = {James\ Milne},
}

@incollection {DattaSmithExcellence,
    AUTHOR = {Datta, Rankeya and Smith, Karen E.},
     TITLE = {Excellence in prime characteristic},
 BOOKTITLE = {Local and global methods in algebraic geometry},
    SERIES = {Contemp. Math.},
    VOLUME = {712},
     PAGES = {105--116},
 PUBLISHER = {Amer. Math. Soc., [Providence], RI},
      YEAR = {[2018] \copyright 2018},
   MRCLASS = {13A35},
  MRNUMBER = {3832401},
MRREVIEWER = {Yongwei Yao},
       DOI = {10.1090/conm/712/14344},
       URL = {https://doi-org.proxy.cc.uic.edu/10.1090/conm/712/14344},
}

@article {HashimotoFpure,
    AUTHOR = {Hashimoto, Mitsuyasu},
     TITLE = {{$F$}-pure homomorphisms, strong {$F$}-regularity, and
              {$F$}-injectivity},
   JOURNAL = {Comm. Algebra},
  FJOURNAL = {Communications in Algebra},
    VOLUME = {38},
      YEAR = {2010},
    NUMBER = {12},
     PAGES = {4569--4596},
      ISSN = {0092-7872},
   MRCLASS = {13A35 (14L30)},
  MRNUMBER = {2764840},
MRREVIEWER = {Geoffrey D. Dietz},
       DOI = {10.1080/00927870903431241},
       URL = {https://doi-org.proxy2.cl.msu.edu/10.1080/00927870903431241},
}

@article {BrennerJeffriesNunezBetancourtQuantifyingSingularities,
    AUTHOR = {Brenner, Holger and Jeffries, Jack and N{\'u}{\~n}ez-Betancourt,
              Luis},
     TITLE = {Quantifying singularities with differential operators},
   JOURNAL = {Adv. Math.},
  FJOURNAL = {Advances in Mathematics},
    VOLUME = {358},
      YEAR = {2019},
     PAGES = {106843, 89},
      ISSN = {0001-8708,1090-2082},
   MRCLASS = {14F10 (13A35 16S32)},
  MRNUMBER = {4020453},
MRREVIEWER = {Ana\ Bravo},
       DOI = {10.1016/j.aim.2019.106843},
       URL = {https://doi.org/10.1016/j.aim.2019.106843},
}

@article{EGAIV,
	Author = {Grothendieck, A.},
	Fjournal = {Institut des Hautes \'Etudes Scientifiques. Publications Math\'ematiques},
	Issn = {0073-8301},
	Journal = {Inst. Hautes \'Etudes Sci. Publ. Math.},
	Mrclass = {14.55},
	Mrnumber = {MR0238860 (39 \#220)},
	Mrreviewer = {J. P. Murre},
	Number = {32},
	Pages = {361},
	Title = {\'{E}l\'ements de g\'eom\'etrie alg\'ebrique. {IV}. \'{E}tude locale des sch\'emas et des morphismes de sch\'emas {IV}},
	Year = {1967}
}

@article{HochsterCyclicPurity,
	Author = {Hochster, Melvin},
	Fjournal = {Transactions of the American Mathematical Society},
	Issn = {0002-9947},
	Journal = {Trans. Amer. Math. Soc.},
	Mrclass = {13D99},
	Mrnumber = {MR0463152 (57 \#3111)},
	Mrreviewer = {T. Albu},
	Number = {2},
	Pages = {463--488},
	Title = {Cyclic purity versus purity in excellent {N}oetherian rings},
	Volume = {231},
	Year = {1977}
}

@book{MatsumuraCommutativeRingTheory,
	Address = {Cambridge},
	Author = {Matsumura, Hideyuki},
	Edition = {Second},
	Isbn = {0-521-36764-6},
	Mrclass = {13-01},
	Mrnumber = {MR1011461 (90i:13001)},
	Note = {Translated from the Japanese by M. Reid},
	Pages = {xiv+320},
	Publisher = {Cambridge University Press},
	Series = {Cambridge Studies in Advanced Mathematics},
	Title = {Commutative ring theory},
	Volume = {8},
	Year = {1989}
}

@book{MatsumuraCommutativeAlgebra,
	Author = {Matsumura, Hideyuki},
	Edition = {Second},
	Isbn = {0-8053-7026-9},
	Mrclass = {13-02},
	Mrnumber = {MR575344 (82i:13003)},
	Mrreviewer = {R. M. Fossum},
	Pages = {xv+313},
	Publisher = {Benjamin/Cummings Publishing Co., Inc., Reading, Mass.},
	Series = {Mathematics Lecture Note Series},
	Title = {Commutative algebra},
	Volume = {56},
	Year = {1980}
}

@article {MukhopadhyaySmithTrivialDiffOps,
    AUTHOR = {Mukhopadhyay, Alapan and Smith, Karen E.},
     TITLE = {Some algebras with trivial rings of differential operators},
   JOURNAL = {Int. Math. Res. Not. IMRN},
  FJOURNAL = {International Mathematics Research Notices. IMRN},
      YEAR = {2025},
    NUMBER = {14},
     PAGES = {Paper No. rnaf213, 18},
      ISSN = {1073-7928,1687-0247},
   MRCLASS = {13N10 (13A35 14F10)},
  MRNUMBER = {4935631},
       DOI = {10.1093/imrn/rnaf213},
       URL = {https://doi.org/10.1093/imrn/rnaf213},
}

@article{OhiDirectSummand,
    AUTHOR = {Ohi, Takeo},
     TITLE = {Direct summand conjecture and descent for flatness},
   JOURNAL = {Proc. Amer. Math. Soc.},
  FJOURNAL = {Proceedings of the American Mathematical Society},
    VOLUME = {124},
      YEAR = {1996},
    NUMBER = {7},
     PAGES = {1967--1968},
      ISSN = {0002-9939,1088-6826},
   MRCLASS = {13B02},
  MRNUMBER = {1317044},
       DOI = {10.1090/S0002-9939-96-03270-4},
       URL = {https://doi.org/10.1090/S0002-9939-96-03270-4},
}

@article{rg71,
  author =        {Raynaud, M. and Gruson, L.},
  journal =       {Invent. Math.},
  pages =         {1--89},
  title =         {Crit\`eres de platitude et de projectivit\'e.
                   {T}echniques de ``platification'' d'un module},
  volume =        {13},
  year =          {1971},
  doi =           {10.1007/BF01390094},
  issn =          {0020-9910},
  url =           {http://dx.doi.org/10.1007/BF01390094},
}

@article {YekutieliExplicitConstructionResidueComplex,
    AUTHOR = {Yekutieli, Amnon},
     TITLE = {An explicit construction of the {G}rothendieck residue
              complex},
      NOTE = {With an appendix by Pramathanath Sastry},
   JOURNAL = {Ast\'erisque},
  FJOURNAL = {Ast\'erisque},
    NUMBER = {208},
      YEAR = {1992},
     PAGES = {127},
      ISSN = {0303-1179,2492-5926},
   MRCLASS = {14F10 (12J10 14B15)},
  MRNUMBER = {1213064},
MRREVIEWER = {Reinhold\ H\"ubl},
}

@misc{stacks-project,
	Author = {The {Stacks Project Authors}},
	Howpublished = {\url{http://stacks.math.columbia.edu}},
	Shorthand = {Stacks},
	Title = {{\itshape Stacks Project}}
}

@article{RaduUneClasseDAnneaux,
	Author = {Radu, Nicolae},
	Fjournal = {Revue Roumaine de Math\'ematiques Pures et Appliqu\'ees. Romanian Journal of Pure and Applied Mathematics},
	Issn = {0035-3965},
	Journal = {Rev. Roumaine Math. Pures Appl.},
	Mrclass = {13E05 (13F40 13H05)},
	Mrnumber = {1172271 (93g:13014)},
	Mrreviewer = {Takashi Harase},
	Number = {1},
	Pages = {79--82},
	Title = {Une classe d'anneaux noeth\'eriens},
	Volume = {37},
	Year = {1992}
}

@MISC {MOFrobeniusepisurjective,
    TITLE = {If the {F}robenius endomorphism of a characteristic $p$ ring is epimorphic, is it surjective?},
    AUTHOR = {Matthieu Romagny},
    HOWPUBLISHED = {MathOverflow},
    NOTE = {URL:https://mathoverflow.net/q/400746 (version: 2021-07-31)},
    EPRINT = {https://mathoverflow.net/q/400746},
    URL = {https://mathoverflow.net/q/400746}
}

@article{AndreHomomorphismsRegulariers,
	Author = {Andr{\'e}, Michel},
	Coden = {CASMEI},
	Fjournal = {Comptes Rendus de l'Acad\'emie des Sciences. S\'erie I. Math\'ematique},
	Issn = {0764-4442},
	Journal = {C. R. Acad. Sci. Paris S\'er. I Math.},
	Mrclass = {13D03 (13A35)},
	Mrnumber = {1214408 (94e:13026)},
	Mrreviewer = {Marco Fontana},
	Number = {7},
	Pages = {643--646},
	Title = {Homomorphismes r\'eguliers en caract\'eristique {$p$}},
	Volume = {316},
	Year = {1993}
}

@article{AndreDirectsummandconjecture,
	Author = {Andr\'{e}, Yves},
	Doi = {10.1007/s10240-017-0097-9},
	Fjournal = {Publications Math\'ematiques. Institut de Hautes \'Etudes Scientifiques},
	Issn = {0073-8301},
	Journal = {Publ. Math. Inst. Hautes \'{E}tudes Sci.},
	Mrclass = {13 (16 17 18)},
	Mrnumber = {3814651},
	Pages = {71--93},
	Title = {La conjecture du facteur direct},
	Url = {https://doi.org/10.1007/s10240-017-0097-9},
	Volume = {127},
	Year = {2018}
}

@article{MUKHOPADHYAY202161,
title = {Reducedness of formally unramified algebras over fields},
journal = {Journal of Algebra},
volume = {577},
pages = {61-73},
year = {2021},
issn = {0021-8693},
doi = {https://doi.org/10.1016/j.jalgebra.2021.03.002},
url = {https://www.sciencedirect.com/science/article/pii/S0021869321001204},
author = {Alapan Mukhopadhyay and Karen E. Smith}
}

@book{berthelot1974cohomologie,
  title={Cohomologie cristalline des sch{\'e}mas de caract{\'e}ristique $p > 0$},
  author={Berthelot, Pierre},
  year={1974},
  publisher={Springer-Verlag},
  series={Lecture Notes in Mathematics},
  volume={407},
  address={Berlin, Heidelberg},
  isbn={978-3-540-06850-1}
}

@article{DattaOlanderRelFrobIso,
      title={A characteristic $p$ analog of formal lifting properties}, 
      author={Rankeya Datta and Noah Olander},
      year={2025},
      Journal={arXiv:2512.00648},
      eprint={2512.00648},
      archivePrefix={arXiv},
      primaryClass={math.AC},
      url={https://arxiv.org/abs/2512.00648}, 
}

@artile{CarvajalRojasStablerPristineMorphisms,
      title={On pristine morphisms}, 
      author={Javier Carvajal-Rojas and Axel Stäbler},
      year={2026},
      Journal={arXiv:2512.06063},
      eprint={2512.06063},
      archivePrefix={arXiv},
      primaryClass={math.AG},
      url={https://arxiv.org/abs/2512.06063}, 
}

@misc{MaPolstraFSing,
      title={{$F$}-singularities: a commutative algebra approach}, 
      author={Ma, Linquan and Polstra, Thomas},
      year={2025},
      url={https://www.math.purdue.edu/~ma326/F-singularitiesBook.pdf},
    note         = {available at \url{https://www.math.purdue.edu/~ma326/F-singularitiesBook.pdf}}
}

@misc{Bhatt_imperfect_trivial_cc,
  author = {Bhatt, Bhargav},
  title = {An imperfect ring with a trivial cotangent complex},
  howpublished = {},
  year = {2012}, 
  url = {https://www.math.ias.edu/~bhatt/math/trivial-cc.pdf},
  note = {available at \url{https://www.math.ias.edu/~bhatt/math/trivial-cc.pdf}}
}

\end{document}